\title{Parameter estimation in a subcritical percolation model with colouring}
\author{Felix Beck\footnote{Centre for Biological Systems Analysis (ZBSA), University of Freiburg, Habsburgerstra\ss{}e~49, 79104~Freiburg, Germany} \footnote{Institute for Mathematics, University of Freiburg, Eckerstra\ss{}e 1, 79104~Freiburg, Germany}, Bence M\'elyk\'uti\textsuperscript{$\ast$}\footnote{Corresponding author. Email: \texttt{melykuti@stochastik.uni-freiburg.de}.}}
\newtheorem{thm}{Theorem}
\newtheorem{lem}[thm]{Lemma}
\newtheorem{prop}[thm]{Proposition}
\theoremstyle{nonumberplain}
\theoremstyle{nonumberplain}
\newtheorem{proof}{Proof}
\newcommand{\E}{\mathrm{E}}
\renewcommand{\P}{\mathrm{P}}
\newcommand{\diag}{\mathrm{diag}}
\newcommand{\T}{^{\mathrm{T}}}
\newcommand{\argmin}[1]{{\displaystyle \mathop{\mathrm{arg\,min}}_{#1}}}
\newcommand{\conv}[1]{{\displaystyle \mathop{\longrightarrow}_{#1}}}
\begin{document}

\maketitle

\begin{abstract}

In the bond percolation model on a lattice, we colour vertices with $n_c$ colours independently at random according to Bernoulli distributions. A vertex can receive multiple colours and each of these colours is individually observable. The colours colour the entire component into which they fall. Our goal is to estimate the $n_c +1$ parameters of the model: the probabilities of colouring of single vertices and the probability with which an edge is open. The input data is the configuration of colours once the complete components have been coloured, without the information which vertices were originally coloured or which edges are open.

We use a Monte Carlo method, the method of simulated moments to achieve this goal. We prove that this method is a strongly consistent estimator by proving a uniform strong law of large numbers for the vertices' weakly dependent colour values. We evaluate the method in computer tests. The motivating application is cross-contamination rate estimation for digital PCR in lab-on-a-chip microfluidic devices.\\

\noindent\textbf{Keywords}\quad parameter estimation, method of simulated moments, percolation, strong law of large numbers with dependence, microfluidics, cross-contamination\\

\noindent\textbf{Mathematics subject classification}\quad 62F10 (Point estimation), 60K35 (Interacting random processes; statistical mechanics type models; percolation theory)
\end{abstract}

\section{Bond percolation with colouring}\label{s:intro}

We consider bond percolation~\cite{Grimmett_1999} on the triangular lattice, but our arguments hold for the square lattice as well. The vertex set of the infinite lattice is denoted by~$L$. Edges are open (that is, included in the graph, alternatively, receive weight $1$ as opposed to~$0$) independently at random with probability~$\mu\in[0,1]$. There are $n_c\in\mathbb{N}\setminus\{0\}$ colours given, and for every colour $\ell\in\{1,2,\dots,n_c\}$, a parameter $\lambda^\ell\in[0,1]$ is fixed. For every vertex~$i\in L$, the vertex is coloured with colour $\ell\in\{1,2,\dots,n_c\}$ according to a Bernoulli random variable with probability $\lambda^\ell$. The colouring with different colours is independent in any one vertex, and it is also independent among different vertices. A vertex can receive multiple colours and each of these colours is individually observable. We call this colouring the \emph{seeding}: $X^\ell_i\in\{0,1\}$ for every $i\in L$ and $\ell\in\{1,2,\dots,n_c\}$.

These colours propagate through open edges and colour (\emph{`contaminate'}) the entire component they are contained in. Let $i\leftrightarrow j$ mean that vertices $i,j\in L$ are connected by an open path. The observed colour configuration is
\[Y^\ell_i:=X^\ell_i\vee\bigvee_{\substack{j\in L\\j\leftrightarrow i}} X^\ell_j \in\{0,1\}\]
for every $i\in L$ and $\ell\in\{1,2,\dots,n_c\}$, where $\vee$ is the maximum operator.

We also consider this process on finite, connected subsets of the lattice, $I\subset L$. (Here connected is meant with all lattice edges considered, not only the open edges.) Picking the vertex set $I$ implicitly fixes its edge set, the edges which connect vertices of~$I$. We let $n_I:=|I|$. We write $i\sim j$ for adjacent lattice vertices $i,j\in L$ no matter in what state the connecting edge is.

Often we consider nested sequences of such $I$ where each successor is a superset of its predecessor and $n_I\to\infty$. We fix an ordering of the vertices of the infinite lattice~$L$ which is compatible with this sequence as $n_I\to\infty$, that is, each $I$ comprises vertices labelled with $\{1, \dots, n_I\}$. We use $I_2:=\{(i,j)\in I\times I\ |\ i\sim j,\ i<j\}$ for the set of ordered pairs of adjacent vertices (independently of whether the connecting edge is open or closed) and $n_p:=|I_2|$ for the total number of possible edges within~$I$. We define the \emph{exterior vertex boundary} of a subset~$I$ by
\[\Delta I:=\{j\in L\ |\ j\notin I,\ \exists i\in I:\ i\sim j\}.\]
We always require that in our sequences, $|\Delta I|/|I|\to 0$ and for the triangular lattice, $n_p \sim 3 n_I$ (asymptotic equality; $n_p \sim 2 n_I$ is the corresponding condition for the square lattice).


For a fixed~$I$, we define a variant of~$Y^\ell_i$ that is determined exclusively by the seeding and edges in~$I$:
\[\widetilde{Y}^\ell_i:=X^\ell_i\vee\bigvee_{\substack{j\in I\\j\widetilde{\leftrightarrow} i}} X^\ell_j \in\{0,1\}\]
for every $i\in I$ and $\ell\in\{1,2,\dots,n_c\}$. Here $\widetilde{\leftrightarrow}$ means connectedness by open edges in the edge set of~$I$.

Our goal is to estimate the parameter $\theta=(\lambda^1,\dots,\lambda^{n_c},\mu)$ from the data $\big(\widetilde{Y}^\ell_i\big)_{i\in I,\ell\in\{1,2,\dots,n_c\}}$ (Figure~\ref{f:process}). The spatial arrangement of~$\big(\widetilde{Y}^\ell_i\big)$ within the lattice is known, but the seeding~$(X^\ell_i)$ and the open or closed state of the edges are unavailable. We bring together four theoretical tools in this paper.

First, parameter estimation is conducted by the \emph{method of simulated moments} (MSM) \cite{Gourieroux_Monfort_1991, Gourieroux_Monfort_1996} (Section~\ref{s:MSM}). This is a simulation-based, computationally intensive statistical method that yields a point estimate for $\theta$ which converges almost surely to the correct value as $n_I\to\infty$.

Second, as the first step towards proving the strong consistence of the estimator, we prove a strong law of large numbers (SLLN) with weakly dependent variables. We do this in Section~\ref{s:SLLN} by adapting Theorem~1 of~\cite{Etemadi_1983a}.

Third, the SLLN result requires some grasp of how small the dependence is between distant vertices of the lattice. The upper bounds on correlations are provided by the FKG and BK inequalities of percolation theory and the exponential decay of the cluster size distribution \cite{Fortuin_Kasteleyn_Ginibre_1971, vandenBerg_Kesten_1985, Aizenman_Newman_1984}, \cite[Chapters~2 and~6]{Grimmett_1999} in Section~\ref{s:corr}.

Fourth, for the strong consistence of the estimator, we extend the SLLN to be uniform in the parameter vector. We verify in Section~\ref{s:ULLN} that the conditions of a sufficient condition for the uniform law of large numbers (ULLN) hold \cite[p.~8, 2~Theorem]{Pollard_1984} \cite[p.~25, Lemma~3.1]{vandeGeer_2010}.

Our estimation method is tested on synthetic data with known parameter values in Section~\ref{s:comp} and its performance is evaluated. In Section~\ref{s:exp}, the motivating problem is described, and the paper concludes with a discussion of possible improvements in modelling and methodology.


\begin{figure}[ht]
\centering
\includegraphics[width=\textwidth]{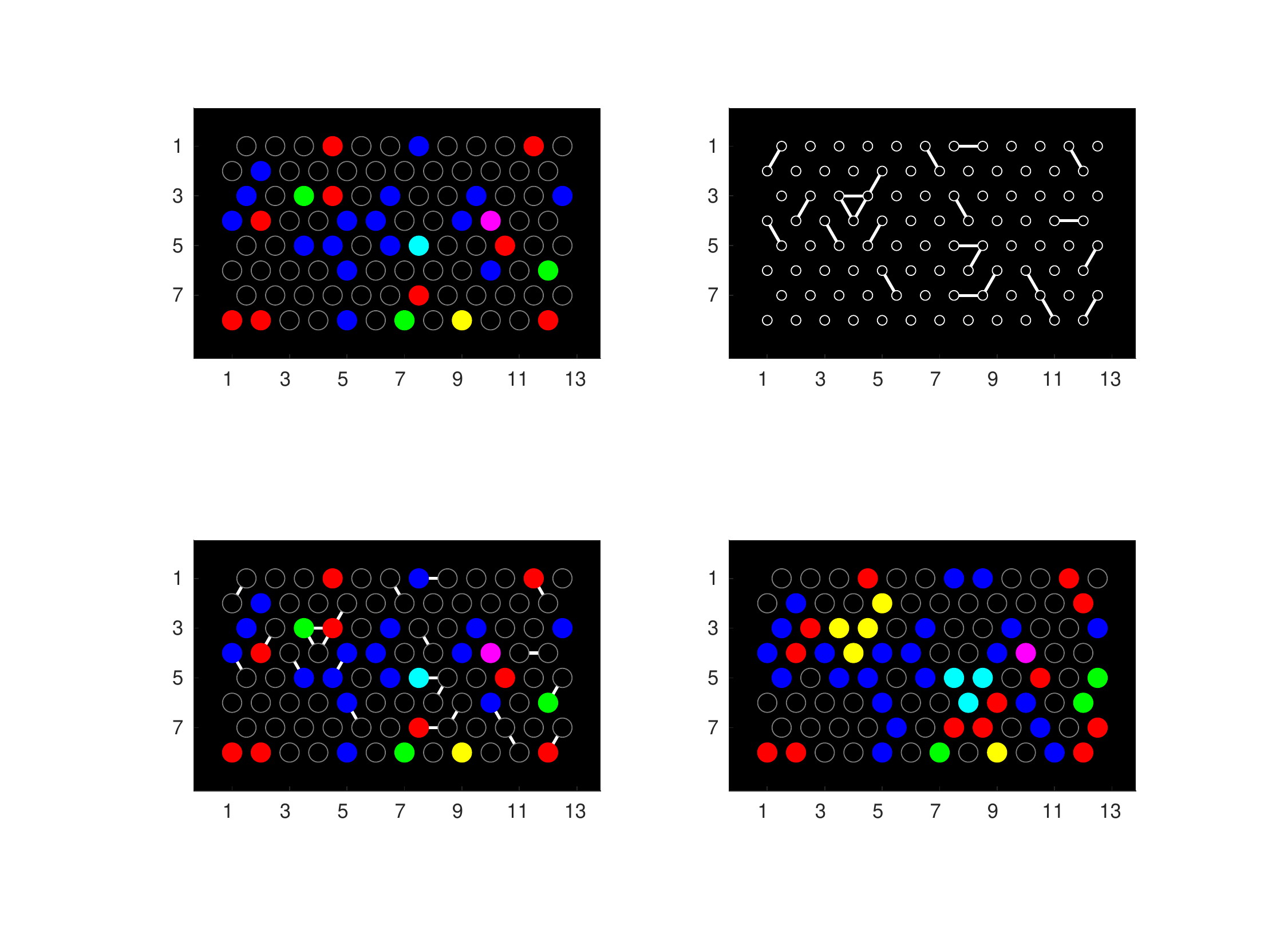}
\caption{(\emph{top left})~A realisation of random seeding $(X^\ell_i)$ with $(\lambda^{\textrm{red}},\lambda^{\textrm{green}},\lambda^{\textrm{blue}})=(0.1,0.05,0.2)$. (\emph{top right})~A realisation of bond percolation on the triangular lattice with $\mu=0.1$. (\emph{bottom left})~The bond percolation overlaid with the seeding. (\emph{bottom right})~The resulting configuration $\big(\widetilde{Y}^\ell_i\big)$ which serves as the data.}\label{f:process}
\end{figure}

\section{Method of simulated moments (MSM)}\label{s:MSM}

The MSM is a modification of the classical method of moments for parameter estimation for the case when the moments of the sampling distribution cannot be computed from the parameters in closed form. The MSM proposes to simulate $n_s$ independent, identically distributed samples from the distribution, repeatedly with different parameter values $\theta$ (usually, but not strictly necessarily, with common random variables as $\theta$ is changed), and to choose the $\theta$ which gives the closest match between moments of the data and that of the simulated data. For its detailed description, we recommend perusing a combination of \cite{Gourieroux_Monfort_1991} and~\cite{Gourieroux_Monfort_1996}.

The data $\mathcal{Y}=(\mathcal{Y}_i)_{i\in I}$ originates from a distribution which is parameterised by the unknown $\theta_0\in\Theta$. $\theta_0$ is called the true value of the parameter. Normally, the $\mathcal{Y}_i$ are independent. A sample from this family of distributions with a general parameter is denoted by $Y=(Y_i)_{i\in I}$. Let $K$ be some $n_m$-dimensional function of the individual observations $Y_i$. Let $k(\theta)$ be the expectation of~$K$ when $K$ is evaluated on a draw $Y_i$ from the distribution with parameter $\theta\in\Theta$, $k(\theta):=\E_\theta[K(Y_i)]$. Thus $k$ is a vector of $n_m$ generalised moments of the distribution of $Y_i$. ($\E_\theta$ is the expectation under the distribution with parameter $\theta$. Similarly, $\P_\theta$ is the probability of an event in that case.)

Let $g$ be some multidimensional function that represents estimating constraints. In our case these are distances between observed moments and moments of the model with given parameter value $\theta$:
\[g(\mathcal{Y}_i,\theta)=K(\mathcal{Y}_i)-k(\theta).\]
By introducing $\E_0$ as a shorthand for $\E_{\theta_0}$, it is immediate that $\E_0[g(\mathcal{Y}_i,\theta_0)]=0$. However, for the parameter estimation problem to be well posed, we require that
\begin{align}\label{e:uniq}
\E_0[g(\mathcal{Y}_i,\theta)]=0\quad\iff\quad\theta=\theta_0.
\end{align}
Implicit in this is that we have at least as many independent equations as parameters.


The MSM is used when $k(\theta)$ is not available in analytical form but there exists an unbiased estimator $\widetilde{k}(U^s_i,\theta)$, and consequently an unbiased estimator for $g$, $\widetilde{g}(\mathcal{Y}_i,U^s_i,\theta)=K(\mathcal{Y}_i)-\widetilde{k}(U^s_i,\theta)$. Here $(U^s_i)_{i\in I,s\in\{1,\dots,n_s\}}$ is some source of randomness, typically vectors of independent, uniform random variables on $[0,1]$ as provided by a pseudorandom number generator. The estimators satisfy $\E\big[\widetilde{k}(U^s_i,\theta)\big]=k(\theta)$ and $\E\big[\widetilde{g}(\mathcal{Y}_i,U^s_i,\theta)\,\big|\,\mathcal{Y}_i\big]=g(\mathcal{Y}_i,\theta)$.


We introduce a weighting by a symmetric, positive definite matrix $\Omega\in\mathbb{R}^{n_m\times n_m}$, which might be a function of the data, and consider the quadratic form $\alpha(\eta)=\eta\T \Omega \eta$. The broad principle of the MSM is the following.


\begin{prop}\label{p:MSM}
The MSM estimator is defined as
\begin{align*}
\hat{\theta}_{n_s,n_I} :&=\argmin{\theta\in\Theta}\, \alpha\left(\frac{1}{n_I}\sum_{i=1}^{n_I}\left(K(\mathcal{Y}_i)-\frac{1}{n_s}\sum_{s=1}^{n_s} \widetilde{k}(U^s_i,\theta)\right)\right).
\end{align*}
If identifiability holds, $n_s$ is fixed and $n_I$ tends to infinity, and the almost sure convergence guaranteed by the SLLN
\begin{align}\label{e:SLLN}
\frac{1}{n_I}\sum_{i=1}^{n_I} \widetilde{k}(U^s_i,\theta) \quad&\conv{n_I\to\infty}\quad k(\theta)
\end{align}
is uniform in $\theta\in\Theta$ for every $s$, then $\hat{\theta}_{n_s,n_I}$ is strongly consistent (that is, $\hat{\theta}_{n_s,n_I}$ converges to $\theta_0$ almost surely).
\end{prop}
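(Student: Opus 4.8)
The plan is to show that the objective function in the definition of $\hat{\theta}_{n_s,n_I}$ converges almost surely and uniformly in $\theta$ to a limiting function whose unique minimiser is $\theta_0$, and then to invoke a standard argmax/argmin consistency argument (as in, e.g., \cite[Theorem~2.1]{Newey_McFadden_1994} or \cite{vandeGeer_2010}). First I would fix the randomness $(U^s_i)$ and the data $\mathcal{Y}$ on a probability-one event on which all the relevant SLLNs hold simultaneously. On this event, for each fixed $\theta$,
\[
\frac{1}{n_I}\sum_{i=1}^{n_I}\Bigl(K(\mathcal{Y}_i)-\frac1{n_s}\sum_{s=1}^{n_s}\widetilde{k}(U^s_i,\theta)\Bigr)\conv{n_I\to\infty} \E_0[K(\mathcal{Y}_i)] - \frac1{n_s}\sum_{s=1}^{n_s} k(\theta) = \E_0[K(\mathcal{Y}_i)] - k(\theta),
\]
where the convergence of the first term uses the SLLN for the (weakly dependent) data $\mathcal{Y}_i$ and the convergence of the inner averages uses \eqref{e:SLLN}, which holds for each $s$. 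Since $n_s$ is fixed, the finite average over $s$ preserves almost sure convergence and, crucially, preserves \emph{uniformity}: a finite maximum of finitely many uniformly convergent sequences converges uniformly. Because $\E_0[g(\mathcal{Y}_i,\theta_0)]=0$ gives $\E_0[K(\mathcal{Y}_i)] = k(\theta_0)$, the pointwise limit of the inner vector is $k(\theta_0)-k(\theta)$.

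Next I would transfer this uniform convergence through the continuous quadratic form $\alpha$. Writing $m_{n_I}(\theta)$ for the inner vector-valued average and $m(\theta):=k(\theta_0)-k(\theta)$ for its uniform almost sure limit, I have $\sup_{\theta\in\Theta}\|m_{n_I}(\theta)-m(\theta)\|\to 0$ a.s. Since $\Omega$ is symmetric positive definite (and, if data-dependent, converges a.s.\ to a fixed symmetric positive definite $\Omega_0$ — a point I would need to assume or verify), the map $\eta\mapsto \eta\T\Omega\eta$ is locally Lipschitz on the bounded set through which $m_{n_I}$ and $m$ range, so $\sup_{\theta}|\alpha(m_{n_I}(\theta))-\alpha_0(m(\theta))|\to 0$ a.s., where $\alpha_0(\eta)=\eta\T\Omega_0\eta$. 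The limiting objective $Q(\theta):=\alpha_0(k(\theta_0)-k(\theta)) = (k(\theta_0)-k(\theta))\T\Omega_0(k(\theta_0)-k(\theta))$ is nonnegative, vanishes at $\theta_0$, and by positive definiteness of $\Omega_0$ together with the identifiability condition \eqref{e:uniq} — which says $k(\theta)=k(\theta_0)$ (equivalently $\E_0[g(\mathcal{Y}_i,\theta)]=0$) if and only if $\theta=\theta_0$ — it vanishes \emph{only} at $\theta_0$. Hence $\theta_0$ is the unique minimiser of $Q$.

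Finally I would close the argument with the classical M-estimator consistency lemma: if $Q_{n_I}\to Q$ uniformly on $\Theta$ almost surely, $Q$ has a well-separated unique minimum at $\theta_0$, and $\hat\theta_{n_I}$ nearly minimises $Q_{n_I}$, then $\hat\theta_{n_I}\to\theta_0$ a.s. Concretely, $Q(\hat\theta_{n_I}) = (Q(\hat\theta_{n_I}) - Q_{n_I}(\hat\theta_{n_I})) + (Q_{n_I}(\hat\theta_{n_I}) - Q_{n_I}(\theta_0)) + (Q_{n_I}(\theta_0) - Q(\theta_0))$; the first and third bracket are bounded by $\sup_\theta|Q_{n_I}-Q|\to 0$, and the middle bracket is $\le 0$ since $\hat\theta_{n_I}$ minimises $Q_{n_I}$, so $Q(\hat\theta_{n_I})\to 0 = Q(\theta_0)$, and well-separatedness forces $\hat\theta_{n_I}\to\theta_0$. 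For well-separatedness one wants $\Theta$ compact (here $\Theta\subseteq[0,1]^{n_c+1}$, so this is fine) and $Q$ continuous with a unique minimiser, which I have; a minor subtlety is that the $\argmin$ should be chosen measurably, which holds under continuity and compactness.

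I expect the main obstacle to be not any one of these steps individually but assembling the measure-theoretic bookkeeping so that a \emph{single} probability-one event supports (i) the data SLLN for the weakly dependent $\widetilde{Y}^\ell_i$, (ii) the uniform version of \eqref{e:SLLN} for each of the finitely many $s$, and (iii) the convergence of a data-dependent $\Omega$ — and then propagating uniformity cleanly through $\alpha$ on a set that is automatically bounded because $K$ takes values in a bounded set. The genuinely hard analytic input — the uniform SLLN of \eqref{e:SLLN} with weak dependence — is exactly what Sections~\ref{s:SLLN}--\ref{s:ULLN} are devoted to, so here it may be invoked as a hypothesis; the remaining work in proving Proposition~\ref{p:MSM} is the comparatively routine deterministic-analysis envelope argument sketched above.
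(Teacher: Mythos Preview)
Your proposal is correct and is the standard argmin-consistency argument for M-estimators. Note, however, that the paper does not actually supply a proof of Proposition~\ref{p:MSM}: it is presented as a known background result from the MSM literature, with the reader referred to \cite{Gourieroux_Monfort_1991, Gourieroux_Monfort_1996}. The informal reasoning the paper \emph{does} give---in the paragraph following Theorem~\ref{th:main}, culminating in~\eqref{e:convalpha}, where uniform almost sure convergence of the objective to a deterministic quadratic form with unique minimiser~$\theta_0$ is asserted and strong consistence concluded---is exactly the argument you have spelled out in more detail, so your write-up is fully in line with the paper's approach.
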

Notice that the number of simulations $n_s$ can remain bounded, it is only $n_I$ that must tend to infinity for consistence. For practical implementations, it is a crucial point that the $(U^s_i)$ must be drawn at the beginning of the exploration of the parameter space and kept fixed afterwards while different parameter values are proposed, in order to avoid introducing an extra layer of fluctuation \cite[p.~29]{Gourieroux_Monfort_1996}. This way, a gradient-based search of the parameter space is possible. At the theoretical level, in the limit $n_I\to\infty$, the estimator is strongly consistent even without using common random numbers.

Under the additional condition that $\widetilde{g}(\mathcal{Y}_i,U^s_i,\theta)$ is twice differentiable with respect to $\theta$, asymptotic normality of the estimator also holds and the asymptotic variance can be explicitly given \cite{Gourieroux_Monfort_1991, Gourieroux_Monfort_1996}.\\

For the MSM applied to our percolation model with colouring, the data points $\widetilde{Y}_i$ are neither identically distributed (because of boundary effects) nor independent, and Proposition~\ref{p:MSM} in its current form does not imply the validity of the method. The main theoretical result of this paper is the proof of the strong consistence of a particular MSM estimator for our estimation problem.

The generalised moment function $K$ we propose contains, in addition to first moments $Y^\ell_i$, products $Y^\ell_i Y^\ell_j$ for $i\sim j$ because these carry much information about open edges. We note the consequence that it no longer suffices that $K$ is a function of individual $Y^\ell_i$ only.

We assume without proof that for this generalised moment function, identifiability~\eqref{e:uniq} holds. For supporting evidence, turn to Section~\ref{s:ident} of the Appendix. This assumption is not true in some extreme cases which we exclude. If $(\lambda^1,\dots,\lambda^{n_c})=h\in\{0,1\}^{n_c}$, then $\widetilde{Y}$ is almost surely identically $h$ for any choice of~$\mu$ (and so is~$Y$). For an $h\in\{0,1\}^{n_c}$, the outcome~$\widetilde{Y}$ is again $h$ with high probability as $n_I\to\infty$, if $\mu=1$, and $\lambda^\ell>0$ if and only if $h_\ell =1$.

The percolation parameter $\mu$ is allowed to take any value in the subcritical regime $[0,p_c[$. $p_c$ is the \emph{critical probability} of bond percolation. For the triangular lattice, its value is $p_c=2\sin\frac{\pi}{18}\approx 0.3473$, while for the square lattice, it is $p_c=1/2$ \cite{Sykes_Essam_1964},~\cite[Chapter~3]{Grimmett_1999}.

Section~\ref{s:SLLN} details the steps leading to the SLLN result~\eqref{e:SLLN}. Due to dependence between the~$Y_i$, cross-correlations appear in the derivation in addition to variances. Section~\ref{s:corr} deals with upper bounding these correlations using percolation theory. Section~\ref{s:ULLN} describes the extension of the SLLN to ULLN.

The observed colouring of the dataset is denoted by $\mathcal{Y}^\ell_i$ ($i\in I$, $\ell\in\{1,2,\dots,n_c\}$), whereas in the simulated data it is $\widetilde{Y}^{\ell,s}_i$ ($s\in\{1,2,\dots,n_s\}$). While it is clear that the simulated data must come from a finite~$I$ (or perhaps from some $I':\ I\subset I'\subset L$), we leave flexibility whether the data is of type $\big(\widetilde{\mathcal{Y}_i}\big)_{i\in I}$, which is the case in our practical application, or of the theoretically appealing type~$(\mathcal{Y}_i)_{i\in I}$. We let $(\mathcal{Y}_i)_{i\in I}$ denote both cases, to be interpreted as the context demands. Lastly, we introduce the following averages: 
\begin{align*}
\bar{\mathcal{Y}}^\ell&:=\frac{1}{n_I}\sum_{i\in I}\mathcal{Y}^\ell_i,&\bar{Y}^{\ell,s}&:=\frac{1}{n_I}\sum_{i\in I}\widetilde{Y}^{\ell,s}_i,\\
\bar{\mathcal{Z}}^\ell&:=\frac{1}{n_p}\sum_{(i,j)\in I_2}\mathcal{Y}^\ell_i \mathcal{Y}^\ell_j,&\bar{Z}^{\ell,s}&:=\frac{1}{n_p}\sum_{(i,j)\in I_2}\widetilde{Y}^{\ell,s}_i \widetilde{Y}^{\ell,s}_j.
\end{align*}
Our main theorem is the following.

\begin{thm}\label{th:main}
Let $\Theta$ be a compact subset of $([0,1]^{n_c}\setminus \{0,1\}^{n_c})\times [0,p_c[$. (For the triangular lattice, $p_c=2\sin\frac{\pi}{18}\approx 0.3473$, while in the square lattice case, $p_c=1/2$.) Consider the bond percolation model with colouring and with the true parameter value $\theta_0=(\lambda^1,\dots,\lambda^{n_c},\mu)\in\Theta$. Let $\Omega\in\mathbb{R}^{2n_c\times 2n_c}$ be a symmetric, positive definite matrix, which might be a function of the data, and write $\alpha(\eta)=\eta\T \Omega \eta$ for the resulting quadratic form. Under the assumption of identifiability, when $n_s$ is fixed and $n_I$ tends to infinity,
\begin{align*}
\hat{\theta}_{n_s,n_I} :&=\argmin{\theta\in\Theta}\,\alpha\left(\begin{array}{c} \left(\frac{1}{n_I}\sum_{i\in I}\left(\mathcal{Y}^\ell_i-\frac{1}{n_s}\sum_{s=1}^{n_s}\widetilde{Y}^{\ell,s}_i \right)\right)_{\ell\in\{1,\dots,n_c\}}\\ \left(\frac{1}{n_p}\sum_{(i,j)\in I_2}\left(\mathcal{Y}^\ell_i \mathcal{Y}^\ell_j-\frac{1}{n_s}\sum_{s=1}^{n_s}\widetilde{Y}^{\ell,s}_i \widetilde{Y}^{\ell,s}_j \right)\right)_{\ell\in\{1,\dots,n_c\}}\end{array} \right)\\
&=\argmin{\theta\in\Theta}\,\alpha\left(\begin{array}{c} \left(\bar{\mathcal{Y}}^\ell - \frac{1}{n_s}\sum_{s=1}^{n_s} \bar{Y}^{\ell,s} \right)_{\ell\in\{1,\dots,n_c\}}\\ \left(\bar{\mathcal{Z}}^\ell - \frac{1}{n_s}\sum_{s=1}^{n_s} \bar{Z}^{\ell,s} \right)_{\ell\in\{1,\dots,n_c\}}\end{array} \right)
\end{align*}
is strongly consistent.
\end{thm}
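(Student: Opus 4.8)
The plan is to recast the statement as a standard $M$-estimator (argmin) consistency result and to supply, in our dependent and non-identically-distributed setting, the three inputs such an argument needs. Define the limiting generalised-moment vector $k(\theta)=\big((m_1^\ell(\theta))_{\ell},(m_2^\ell(\theta))_{\ell}\big)$, where $m_1^\ell(\theta)=1-\E_\theta\big[(1-\lambda^\ell)^{|C_0|}\big]$ is the probability that a vertex of the infinite lattice carries colour~$\ell$ ($C_0$ being the open cluster of the origin) and $m_2^\ell(\theta)=\E_\theta\big[Y^\ell_0 Y^\ell_{e_1}\big]$ for a fixed lattice edge $\{0,e_1\}$. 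Both are well defined because $\mu<p_c$ makes $|C_0|$ finite almost surely, and both are continuous in~$\theta$: by dominated convergence in~$\lambda^\ell$, and in~$\mu$ because $\P_\mu(|C_0|=n)$ is polynomial in~$\mu$ while the cluster-size tail decays exponentially uniformly on compact subsets of $[0,p_c[$ (Section~\ref{s:corr}). I will then show that the bracketed vector inside~$\alpha$ converges almost surely, uniformly in $\theta\in\Theta$, to $k(\theta_0)-k(\theta)$; since $\alpha$ is continuous and, by positive definiteness of~$\Omega$, vanishes only at the origin, the objective converges a.s.\ uniformly to $Q(\theta):=\alpha\big(k(\theta_0)-k(\theta)\big)$, which is continuous, non-negative, and by the assumed identifiability~\eqref{e:uniq} --- read here as injectivity of~$k$ --- vanishes only at~$\theta_0$. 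Compactness of~$\Theta$ then makes $\theta_0$ the unique, well-separated minimiser of~$Q$, and the classical argmin argument yields $\hat\theta_{n_s,n_I}\to\theta_0$ almost surely.

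For the moment convergence I would first fix a value of~$\theta$ and treat the observed data. Write $\bar{\mathcal{Y}}^\ell=\tfrac{1}{n_I}\sum_{i\in I}\E[\mathcal{Y}^\ell_i]+\big(\bar{\mathcal{Y}}^\ell-\E\bar{\mathcal{Y}}^\ell\big)$, and likewise for $\bar{\mathcal{Z}}^\ell$. The averaged expectations converge to $m_1^\ell(\theta_0)$ resp.\ $m_2^\ell(\theta_0)$: by translation invariance this is exact on the infinite lattice, and for the finite-$I$ variant $\widetilde{Y}$ the boundary correction is bounded by $\tfrac{1}{n_I}\sum_{i\in I}\P(\text{cluster of }i\text{ meets }\Delta I)$, which vanishes since $|\Delta I|/|I|\to 0$ and the cluster-size tail is exponential. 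The centred part tends to~$0$ almost surely by the Etemadi-type SLLN of Section~\ref{s:SLLN}, whose hypothesis is a summable bound on $\operatorname{Cov}(\widetilde{Y}^\ell_i,\widetilde{Y}^\ell_j)$ in the separation $|i-j|$ and on the analogous covariances between the products $\widetilde{Y}^\ell_i\widetilde{Y}^\ell_j$; Section~\ref{s:corr} provides these from the FKG and BK inequalities together with exponential cluster decay --- this is the step that genuinely uses subcriticality. Running the same argument on a fresh simulated configuration on~$I$ (or on some $I'\supset I$) built from the random source $(U^s_i)$ gives~\eqref{e:SLLN} pointwise in~$\theta$ for $\bar Y^{\ell,s}$ and $\bar Z^{\ell,s}$, with the same infinite-lattice limits, so observed and simulated moments share their targets.

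To promote pointwise to uniform convergence over the compact~$\Theta$, I would exploit the monotone coupling inherent in common random numbers: generate the simulated configurations by thresholding a fixed array of uniforms ($U_e\le\mu$ opens edge~$e$, $V^\ell_i\le\lambda^\ell$ seeds~$i$ with colour~$\ell$), under which each $\widetilde{Y}^{\ell,s}_i(\theta)$, and hence $\bar Y^{\ell,s}(\theta)$ and $\bar Z^{\ell,s}(\theta)$, is non-decreasing in every coordinate of~$\theta$. A sequence of coordinatewise monotone functions on a compact set that converges pointwise to a continuous limit converges uniformly (a P\'olya-type argument), so the uniform statement follows from the pointwise one plus the continuity of~$k$. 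Alternatively one checks the manageability hypotheses of the ULLN of \cite[p.~8]{Pollard_1984} or \cite[Lemma~3.1]{vandeGeer_2010} directly, the classes involved being $\{0,1\}$-valued indicators monotone in~$\theta$, hence of small bracketing number and with bounded envelope --- this is the route of Section~\ref{s:ULLN}. Since there are only finitely many coordinates, uniform convergence of the full bracketed vector to $k(\theta_0)-k(\theta)$ follows, closing the argument of the first paragraph.

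The main obstacle is precisely the covariance control: not only must $\operatorname{Cov}(\widetilde{Y}^\ell_i,\widetilde{Y}^\ell_j)$ and its pair-analogue decay fast enough in $|i-j|$ to feed the dependent SLLN, the decay must hold with constants uniform over $\theta\in\Theta$ for the uniformisation to survive, which forces the exponential decay of the cluster-size distribution to be invoked uniformly on compact subsets of the subcritical interval --- above $p_c$ the infinite cluster would destroy it. A secondary, structural point is that the generalised moment function now contains the non-local products $\widetilde{Y}^\ell_i\widetilde{Y}^\ell_j$ over adjacent pairs and uses the two normalisations $n_I$ and $n_p\sim 3n_I$, so Proposition~\ref{p:MSM} cannot be quoted verbatim: its proof has to be rerun with these pair-moments, which is routine once the three inputs above are in place. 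The remaining ingredients --- the boundary estimate, the continuity of~$k$, and the P\'olya-type uniformisation --- are standard.
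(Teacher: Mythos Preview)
Your proposal is correct and follows essentially the same route as the paper: the Etemadi-type SLLN of Section~\ref{s:SLLN} fed by the FKG/BK/exponential-decay covariance bounds of Section~\ref{s:corr}, uniformised over~$\Theta$ via the monotone coupling and bracketing (your P\'olya-type argument is precisely the content of the paper's Theorem~\ref{th:ULLN}), followed by the standard argmin argument. The only cosmetic differences are that you name the limiting moment vector~$k(\theta)$ explicitly and argue its continuity via dominated convergence, whereas the paper works with the $n_I$-dependent averaged expectations in~\eqref{e:convalpha} and establishes Lipschitz continuity of~$\E_\theta Y_i$ through a pivotality/Russo-type computation (Lemma~\ref{l:ucont}); the substance is the same.
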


In order to prove the claim, we want to establish that for the arithmetic means generated under general~$\theta$, the following almost sure convergences hold as $n_I\to\infty$, uniformly in~$\theta\in\Theta$:
\begin{align*}
\frac{1}{n_I}\sum_{i\in I}Y^\ell_i - \frac{1}{n_I}\sum_{i\in I}\E_\theta Y^\ell_i &\ \longrightarrow\ 0\\
\mathrm{and}\quad \frac{1}{n_p}\sum_{(i,j)\in I_2}Y^\ell_i Y^\ell_j - \frac{1}{n_p}\sum_{(i,j)\in I_2} \E_\theta\!\left[Y^\ell_i Y^\ell_j\right]&\ \longrightarrow\ 0,
\end{align*}
for $i\sim j$. The same proofs apply with~$\widetilde{Y}$, too. This unusual formulation of the SLLN is needed because the random variables~$\widetilde{Y}$ are not identically distributed due to boundary effects. These two SLLNs ultimately ensure that
\begin{multline}\label{e:convalpha}
\alpha\left(\begin{array}{c} \left(\bar{\mathcal{Y}}^\ell - \frac{1}{n_s}\sum_{s=1}^{n_s} \bar{Y}^{\ell,s} \right)_{\ell\in\{1,\dots,n_c\}}\\ \left(\bar{\mathcal{Z}}^\ell - \frac{1}{n_s}\sum_{s=1}^{n_s} \bar{Z}^{\ell,s} \right)_{\ell\in\{1,\dots,n_c\}}\end{array} \right)-\\
\alpha\left(\begin{array}{c} \left(\frac{1}{n_I}\sum_{i\in I}\left(\E_0 Y^\ell_i - \E_\theta \widetilde{Y}^\ell_i \right)\right)_{\ell\in\{1,\dots,n_c\}}\\ \left(\frac{1}{n_p}\sum_{(i,j)\in I_2}\left(\E_0[Y^\ell_i Y^\ell_j] - \E_\theta [\widetilde{Y}^\ell_i \widetilde{Y}^\ell_j] \right)\right)_{\ell\in\{1,\dots,n_c\}}\end{array} \right)\ \conv{n_I\to\infty}\ 0
\end{multline}
uniformly with probability~$1$. The right term is minimal when it is asymptotically zero (in the case when $\E_0$ acts on $Y^\ell_i$ and $Y^\ell_i Y^\ell_j$; when it acts on $\widetilde{Y}^\ell_i$ and $\widetilde{Y}^\ell_i \widetilde{Y}^\ell_j$, then it is actually zero), and this is achieved in only $\theta=\theta_0$ under the assumption of identifiability~\eqref{e:uniq}. This gives the strong consistence for~$\hat{\theta}_{n_s,n_I}$.


\section{Strong law of large numbers with weak dependence}\label{s:SLLN}

We adapt the proof of Theorem~1 of~\cite{Etemadi_1983a} in this section to suit our purposes. We write out the claims with~$Y$, but they also hold for~$\widetilde{Y}$.
\begin{prop}\label{p:first}
Let $\theta\in [0,1]^{n_c}\times [0,p_c[$, where $p_c$ is the critical probability of bond percolation. If $Y$ is generated with parameter value $\theta$, then
\[\frac{1}{n_I}\left(\sum_{i\in I}Y^\ell_i-\sum_{i\in I}\E_\theta Y^\ell_i\right)\ \conv{n_I\to\infty}\ 0\]
almost surely. The claim also holds for~$\widetilde{Y}$.
\end{prop}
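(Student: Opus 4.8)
The plan is to follow Etemadi's strategy~\cite{Etemadi_1983a}: prove the convergence first along a geometrically growing subsequence of index sets by Chebyshev's inequality and the Borel--Cantelli lemma, then interpolate to the full sequence by monotonicity of the uncentred partial sums, and finally let the growth ratio tend to~$1$. Since $Y^\ell_i\in\{0,1\}$ is nonnegative and bounded, the truncation step of the general argument is unnecessary and $\mathrm{Var}_\theta Y^\ell_i\le\tfrac14$. I fix $\theta$ with $\mu<p_c$ and fix~$\ell$ (if $\lambda^\ell=0$ then $Y^\ell\equiv 0$ and the claim is trivial), and write $T_m:=\sum_{i=1}^m Y^\ell_i$, $t_m:=\E_\theta T_m$, $S_m:=T_m-t_m$, so that the assertion is $S_{n_I}/n_I\to 0$ almost surely.

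The decisive ingredient is the variance estimate
\[\mathrm{Var}_\theta(T_m)\ =\ \sum_{i=1}^m\mathrm{Var}_\theta Y^\ell_i+\sum_{\substack{i,j=1\\ i\ne j}}^m\mathrm{Cov}_\theta\!\big(Y^\ell_i,Y^\ell_j\big)\ \le\ C(\theta)\,m,\]
with $C(\theta)$ independent of the particular index set. This is where percolation theory enters and is carried out in Section~\ref{s:corr}: $Y^\ell_i$ equals~$1$ exactly when the open cluster through~$i$ contains an $\ell$-seeded vertex, so $Y^\ell_i$ is an increasing function of the edge and colour variables and hence $\mathrm{Cov}_\theta(Y^\ell_i,Y^\ell_j)\ge 0$ by the FKG inequality; in the other direction, the BK inequality together with the exponential decay of the cluster-size distribution in the subcritical regime $\mu<p_c$ yields $\mathrm{Cov}_\theta(Y^\ell_i,Y^\ell_j)\le c_1(\theta)\,e^{-c_2(\theta)\,\mathrm{dist}(i,j)}$. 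As the number of lattice sites at graph distance~$d$ grows only linearly in~$d$, the inner sum $\sum_{j\ne i}\mathrm{Cov}_\theta(Y^\ell_i,Y^\ell_j)$ is dominated by a convergent series uniformly in~$i$, which gives the displayed bound.

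Granting this, fix $\rho>1$ and pass to the subsequence of index sets of cardinalities $k_n:=\lfloor\rho^n\rfloor$. Chebyshev's inequality gives $\P_\theta(|S_{k_n}|>\varepsilon k_n)\le C(\theta)/(\varepsilon^2 k_n)$ for every $\varepsilon>0$, and $\sum_n k_n^{-1}<\infty$, so by Borel--Cantelli $S_{k_n}/k_n\to 0$ almost surely. For the interpolation, $T_m$ and $t_m$ are nondecreasing in~$m$; for~$\widetilde Y$ this monotonicity holds along the nested sequence since enlarging~$I$ only increases the open connectivity within~$I$, hence each~$\widetilde Y^\ell_i$, besides adjoining further nonnegative summands. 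For $k_n\le m<k_{n+1}$ one then has $T_{k_n}\le T_m\le T_{k_{n+1}}$, $t_{k_n}\le t_m\le t_{k_{n+1}}$ and $t_{k_{n+1}}-t_{k_n}\le k_{n+1}-k_n$ (because $\E_\theta Y^\ell_i\le 1$), whence
\[\frac{|S_m|}{m}\ \le\ \frac{k_{n+1}}{k_n}\left(\frac{|S_{k_{n+1}}|}{k_{n+1}}+\frac{|S_{k_n}|}{k_n}\right)+\frac{k_{n+1}-k_n}{k_n}.\]
As $n\to\infty$ the right-hand side converges almost surely to $\rho-1$ by the subsequence convergence, so $\limsup_m|S_m|/m\le\rho-1$ almost surely. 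Intersecting these almost-sure events over a sequence $\rho\downarrow 1$ yields $S_{n_I}/n_I\to 0$ almost surely, which is the claim; the argument for~$\widetilde Y$ is identical, the covariance bound being only easier for clusters confined to~$I$.

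I expect the variance estimate to be the only genuine obstacle --- specifically the exponential decay of $\mathrm{Cov}_\theta(Y^\ell_i,Y^\ell_j)$ in $\mathrm{dist}(i,j)$ and its uniformity in~$i$ up to the boundary --- which is why it is isolated in Section~\ref{s:corr}; once it is available, the subsequence-and-monotonicity skeleton above is routine. A minor technicality is that the index sets grow through a prescribed nested sequence rather than one vertex at a time, so the subsequence must be chosen so that consecutive cardinalities straddle the values $\lfloor\rho^n\rfloor$; the standing requirement $|\Delta I|/|I|\to 0$ keeps the gaps under control and renders this harmless.
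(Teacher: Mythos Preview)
Your argument for $Y$ is correct and coincides with the paper's: Etemadi's lacunary subsequence, Chebyshev plus Borel--Cantelli via the variance bound $\mathrm{Var}\,T_m=\mathcal{O}(m)$ from Section~\ref{s:corr}, then sandwiching by monotonicity of the nonnegative partial sums and letting the ratio tend to~$1$. The only cosmetic difference is that you bound the covariances by exponential decay in $\mathrm{dist}(i,j)$, whereas the paper bounds $\mathrm{Cov}(Y_i,Y_j)\le\P(i\leftrightarrow j)$ and sums these into $\E[|C(i)|-1]$; both routes give the same $\mathcal{O}(m)$ conclusion.

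For $\widetilde Y$, however, your interpolation step has a genuine gap. You correctly observe that enlarging $I$ can only increase each $\widetilde Y^\ell_i$ already present, so $T_m$ remains monotone; but precisely because of this, the difference $t_{k_{n+1}}-t_{k_n}$ is \emph{not} a sum of $k_{n+1}-k_n$ new terms. It decomposes as
\[
t_{k_{n+1}}-t_{k_n}=\sum_{i=k_n+1}^{k_{n+1}}\E_\theta\widetilde Y^{k_{n+1}}_i\;+\;\sum_{i=1}^{k_n}\bigl(\E_\theta\widetilde Y^{k_{n+1}}_i-\E_\theta\widetilde Y^{k_n}_i\bigr),
\]
and the second sum is not covered by ``$\E_\theta Y^\ell_i\le 1$''. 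The paper isolates exactly this point and proves (Proposition~\ref{p:means}) that the second sum is $o(k_n)$: the event $\widetilde Y^{k_{n+1}}_i>\widetilde Y^{k_n}_i$ forces $i\leftrightarrow\Delta I(k_n)$, and summing these probabilities is at most $|\Delta I(k_n)|\cdot\E|C(0)|$, which is $o(k_n)$ by the standing assumption $|\Delta I|/|I|\to 0$ together with subcritical finite mean cluster size. So your closing remark that this assumption ``renders this harmless'' is ultimately right, but it is doing real work here, not merely controlling gaps in the subsequence; without it your bound $t_{k_{n+1}}-t_{k_n}\le k_{n+1}-k_n$ is unjustified for~$\widetilde Y$.
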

\begin{prop}\label{p:second}
Let $\theta\in [0,1]^{n_c}\times [0,p_c[$. If $Y$ is generated with parameter value $\theta$, then
\[\frac{1}{n_p}\left(\sum_{(i,j)\in I_2}Y^\ell_i Y^\ell_j-\sum_{(i,j)\in I_2}\E_\theta [Y^\ell_i Y^\ell_j]\right)\ \conv{n_I\to\infty}\ 0\]
almost surely. The claim also holds for~$\widetilde{Y}$.
\end{prop}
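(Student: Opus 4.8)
The plan is to adapt Etemadi's proof of the SLLN (Theorem~1 of~\cite{Etemadi_1983a}), which establishes the SLLN for pairwise independent, identically distributed variables by a subsequence-plus-monotonicity argument, to our setting of bounded, non-identically-distributed, \emph{weakly dependent} summands. The key structural facts we will exploit are: the summands $Y^\ell_i$ are $\{0,1\}$-valued, hence uniformly bounded, so there is no truncation step to worry about (Etemadi's truncation is vacuous here); and, crucially, the covariance $\mathrm{Cov}_\theta(Y^\ell_i,Y^\ell_j)$ decays in the lattice distance between~$i$ and~$j$. The latter is exactly what Section~\ref{s:corr} will supply via the FKG inequality (to get a sign and a bound) together with the exponential decay of the cluster radius distribution in the subcritical phase $\mu<p_c$: two vertices' colour values $Y^\ell_i,Y^\ell_j$ are correlated essentially only through the event that one of them lies in a large cluster reaching towards the other, and this has probability decaying exponentially in $\mathrm{dist}(i,j)$. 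Consequently $\sum_{j\in L}\mathrm{Cov}_\theta(Y^\ell_i,Y^\ell_j)$ is bounded by a constant uniform in~$i$ (and, we expect, uniform in~$\theta$ over the compact subcritical set~$\Theta$), which yields $\mathrm{Var}_\theta\big(\sum_{i\in I}Y^\ell_i\big)=O(n_I)$.

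Granting this variance bound, I would proceed as in Etemadi. Write $S_n:=\sum_{i\in I_n}Y^\ell_i$ and $a_n:=\E_\theta S_n$ along a nested sequence with $n=n_{I_n}$. Fix $\beta>1$ and let $k_n:=\lfloor\beta^n\rfloor$. By Chebyshev and the $O(k_n)$ variance bound, $\P_\theta\big(|S_{k_n}-a_{k_n}|>\varepsilon k_n\big)\le C/(\varepsilon^2 k_n)$, and since $\sum_n 1/k_n<\infty$ (geometric), Borel--Cantelli gives $(S_{k_n}-a_{k_n})/k_n\to0$ almost surely. To fill the gaps between the $k_n$, I would use monotonicity: because every $Y^\ell_i\ge0$, for $k_n\le m\le k_{n+1}$ we have $S_{k_n}\le S_m\le S_{k_{n+1}}$, and similarly $a_{k_n}\le a_m\le a_{k_{n+1}}$; combined with $k_{n+1}/k_n\to\beta$, this sandwiches $(S_m-a_m)/m$ between quantities converging to something of size $O(\beta-1)$ almost surely. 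Letting $\beta\downarrow1$ through a countable sequence forces $(S_m-a_m)/m\to0$ a.s., which is exactly the assertion of Proposition~\ref{p:first}; the argument is verbatim the same with $\widetilde{Y}$, since $\widetilde{Y}^\ell_i$ is again $\{0,1\}$-valued and its covariances are dominated by the same decay bound (restricting connectivity to edges within~$I$ only decreases connection probabilities, so FKG-type bounds still apply). For Proposition~\ref{p:second} the scheme is identical after replacing vertices by the pairs in $I_{2}$: the summands $Y^\ell_i Y^\ell_j$ ($i\sim j$) are again $\{0,1\}$-valued and nonnegative, $n_p\sim 3n_I$ so the normalisation behaves like $n_I$, and I need the analogous bound $\sum_{(k,l)\in L_2}\mathrm{Cov}_\theta\big(Y^\ell_i Y^\ell_j,\,Y^\ell_k Y^\ell_l\big)\le C$ uniformly in the pair $(i,j)$ — this again follows because $Y^\ell_iY^\ell_j$ depends on the configuration only near $\{i,j\}$ up to exponentially unlikely large-cluster events, so two such products at distance $d$ have covariance decaying exponentially in~$d$.

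The main obstacle I anticipate is the covariance decay estimate itself — i.e. making precise the claim that products like $\mathrm{Cov}_\theta(Y^\ell_iY^\ell_j,Y^\ell_kY^\ell_l)$ are summable with a bound uniform in the base pair and (ideally) in~$\theta$. The subtlety is that $Y^\ell_i$ is \emph{not} a local function: it can be influenced by a colour seeded arbitrarily far away, provided an open path connects them. The resolution is to condition on, or union-bound over, the event that the cluster of~$i$ (or of~$j$) has radius at least $\tfrac{1}{2}\mathrm{dist}(\{i,j\},\{k,l\})$; off this event, $Y^\ell_iY^\ell_j$ and $Y^\ell_kY^\ell_l$ are measurable with respect to disjoint sets of edges and vertices and hence independent. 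The subcritical exponential decay $\P_\mu(\mathrm{rad}(C_0)\ge n)\le e^{-cn}$ (Aizenman--Newman / Menshikov, cf.~\cite[Chapter~6]{Grimmett_1999}) then controls the bad event, and since on the triangular lattice the number of pairs $(k,l)$ at distance $d$ grows only polynomially in~$d$, the sum $\sum_d (\text{poly}(d))\,e^{-cd}$ converges. Uniformity over the compact $\Theta\subset[0,p_c[^{\!}$ comes from the fact that the decay rate $c=c(\mu)$ can be taken bounded below on $[0,p_c-\delta]$. This is the step where the percolation input of Section~\ref{s:corr} does all the real work; everything else is the bounded-variable, nonnegative-summand version of Etemadi's classical argument.
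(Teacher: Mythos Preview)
Your skeleton is the paper's: lacunary subsequence $k_n=[a^n]$, Chebyshev with an $\mathcal{O}(n_p(k_n))$ variance bound, Borel--Cantelli along $(k_n)$, nonnegativity sandwich using $n_p(k_{n+1})/n_p(k_n)\to a$, then $a\downarrow 1$. Where you diverge is in the covariance estimate. You propose to localise $Y_iY_j$ to a ball of radius $\tfrac{1}{2}\mathrm{dist}(\{i,j\},\{k,l\})$ and bound the error by the exponential decay of the cluster \emph{radius}. The paper instead goes through the BK inequality: since $\{Y_{i_1}Y_{i_2}=1\}$ and $\{Y_{j_1}Y_{j_2}=1\}$ are increasing, one gets directly $\E[Y_{i_1}Y_{i_2}Y_{j_1}Y_{j_2}]-\E[Y_{i_1}Y_{i_2}]\,\E[Y_{j_1}Y_{j_2}]\le \P\big(\{i_1,i_2\}\leftrightarrow\{j_1,j_2\}\big)$, and summing over all pairs reduces to $\sum_i\E[|C(i)|-1]$, controlled by exponential decay of the cluster \emph{size} (Lemma~\ref{l:corr2}). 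Both routes yield the required $\mathcal{O}(k_n)$; the BK argument is shorter and avoids the measurability bookkeeping your decoupling requires, while your approach does not rely on the disjoint-occurrence machinery.

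There is, however, a genuine gap in your treatment of~$\widetilde{Y}$: it is \emph{not} verbatim. Because $\widetilde{Y}^\ell_i$ depends on the ambient finite lattice~$I$, when you sandwich $\widetilde{T}_k$ between $\widetilde{T}_{k_n}$ and $\widetilde{T}_{k_{n+1}}$ the summands for $(i,j)\in I_2(k_n)$ themselves shift (under the natural coupling they can only increase as $I$ grows). This generates an extra drift term $\frac{1}{n_p(k_n)}\sum_{(i,j)\in I_2(k_n)}\big(\E[\widetilde{Y}^{k_{n+1}}_i\widetilde{Y}^{k_{n+1}}_j]-\E[\widetilde{Y}^{k_n}_i\widetilde{Y}^{k_n}_j]\big)$ in the upper estimate, which must be shown to vanish. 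The paper does this with a separate boundary estimate (Proposition~\ref{p:means} and its pair analogue~\eqref{e:convij}): $\E[Y_i-\widetilde{Y}_i]\le\P(i\leftrightarrow\Delta I)$, which sums to at most $|\Delta I|\,\E[|C(0)|]=o(n_I)$. Your remark that ``restricting connectivity only decreases connection probabilities'' handles the covariance bound for fixed~$I$ but does not address this drift in the means across different~$I$.
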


\begin{proof}[Proposition~\ref{p:first}] For the ease of notation, let $Y_i:=Y^\ell_i$ for some fixed $\ell\in\{1,\dots,n_c\}$ ($i\in I$), created by our percolation process with $\theta=(\lambda^1,\dots,\lambda^{n_c},\mu)$. Let $a>1$ and define the lacunary sequence $k_n:=[a^n]$. Let $S_k:=\sum_{i=1}^k Y_i$.

By the application of Chebyshov's inequality, for every $\varepsilon>0$,
\begin{align}
\sum_{n=1}^\infty\P\left(\left|\frac{S_{k_n}-\E S_{k_n}}{k_n}\right|>\varepsilon\right)&\le\sum_{n=1}^\infty\frac{\mathrm{Var}\, S_{k_n}}{\varepsilon^2 k_n^2}\notag\\
&\le\frac{1}{\varepsilon^2}\sum_{n=1}^\infty \frac{1}{k_n^2}\sum_{i=1}^{k_n}\mathrm{Var}\, Y_i\notag\\
&\quad+\frac{1}{\varepsilon^2}\sum_{n=1}^\infty\frac{1}{k_n^2}\sum_{1\le i\neq j\le k_n}(\E[Y_i Y_j]-\E Y_i \,\E Y_j).\label{e:Cheb}
\end{align}
If we can prove that this is finite, then by the Borel--Cantelli lemma, as $n\to\infty$, for every $\theta\in\Theta$,
\begin{align}\label{e:knconv}
\left|\frac{S_{k_n}-\E S_{k_n}}{k_n}\right|&\to 0\quad\textrm{a.s.}
\end{align}
We first show that
\[\sum_{n=1}^\infty \frac{1}{k_n^2}\sum_{i=1}^{k_n}\mathrm{Var}\, Y_i<\infty\]
by noticing that $\sup_{i\in I}\mathrm{Var}\, Y_i\le 1$ and by the following lemma. 
\begin{lem}\label{l:sumkn}
If $1<a$, then
\[\sum_{n=1}^\infty \frac{1}{k_n}<\infty.\]
\end{lem}
\begin{proof}
For $n\in\mathbb{N}$ sufficiently large, $a^n/2\le k_n$ because $n\ge \frac{\log 2}{\log a}$ suffices. To see this, consider that $a^n/2 \le a^n -1 <k_n$ is achieved, giving the threshold, if $2\le a^n$. Let
\[N_0:=\max\left\{1,\left\lceil \frac{\log 2}{\log a}\right\rceil\right\}.\]
Consequently, for some constant $c$,
\begin{align*}
\sum_{n=1}^\infty \frac{1}{k_n}&=\sum_{n=1}^{N_0 -1} \frac{1}{k_n} + \sum_{n=N_0}^\infty \frac{1}{k_n} \le c + \sum_{n=N_0}^\infty \frac{2}{a^n} = c+ \frac{2}{a^{N_0}(1-1/a)}<\infty.
\end{align*}
\end{proof}


We prove in Section~\ref{s:corr} that
\begin{align}
\left|\sum_{1\le i\neq j\le k_n}(\E[ Y_i Y_j]-\E Y_i \,\E Y_j)\right|&=\mathcal{O}(k_n),\label{e:corr}
\end{align}
so that by applying Lemma~\ref{l:sumkn} once again, we get that \eqref{e:Cheb} is finite, as required.

In the case of a general $k:=n_I$, $k$ is sandwiched between some $k_n\le k < k_{n+1}$ and
\begin{align}
\frac{S_k-\E S_k}{k}&\le \frac{S_{k_{n+1}}-\E S_{k_n}}{k}\notag\\
&\le\left| \frac{S_{k_{n+1}}-\E S_{k_{n+1}}}{k_{n+1}}\right|\frac{k_{n+1}}{k_n} + \frac{\E S_{k_{n+1}}-\E S_{k_n}}{k_n}.\label{e:genk}
\end{align}
Note that even for $S_{k_{n+1}}-\E S_{k_n}<0$, one can change the denominator from $k$ to~$k_n$ in the second inequality because the right-hand side is nonnegative. Here, for a fixed $a>1$,
\begin{align}
\frac{k_{n+1}}{k_n}&=\frac{[a^{n+1}]}{[a^n]}\le\frac{a^{n+1}}{a^n -1}=a+\frac{a}{a^n -1},\label{e:succlacun}
\end{align}
which in turn is arbitrarily close to $a$ when $n$ is sufficiently large. Additionally,
\begin{align*}
\frac{\E S_{k_{n+1}}-\E S_{k_n}}{k_n}&\le\frac{(k_{n+1}-k_n)\sup_{i\in I}\E Y_i}{k_n}\\
&\le \left(a+\frac{a}{a^n -1}-1\right)\sup_{i\in I}\E Y_i,
\end{align*}
and combining this with \eqref{e:knconv} yields
\begin{align*}
\mathop{\mathrm{lim\,sup}}_{k\to\infty}\frac{S_k-\E S_k}{k}\le (a-1)\sup_{i\in I}\E Y_i\le a-1.
\end{align*}
A similar lower bound can also be attained. Since $a>1$ can be chosen arbitrarily, the SLLN for $Y_i$ (Proposition~\ref{p:first}) holds once we prove the estimate~\eqref{e:corr}.
\end{proof}

\begin{proof}[Proposition~\ref{p:second}] 
This proof goes entirely analogously to that of Proposition~\ref{p:first}. We keep using the notation $Y_i:=Y^\ell_i$ for some fixed $\ell\in\{1,\dots,n_c\}$ and fixed~$\theta$, and the lacunary sequence $k_n=[a^n]$ for $a>1$. Let $T_k:=\sum_{(i,j)\in I_2} Y_i Y_j$ for $I=I(k)$ composed of the first $k$ vertices according to the fixed ordering. This sum has $n_p(k)$ terms. Then, by the argument of \eqref{e:Cheb}, for every $\varepsilon>0$,
\begin{align}
\sum_{n=1}^\infty\P\left(\left|\frac{T_{k_n}-\E T_{k_n}}{n_p(k_n)}\right|>\varepsilon\right)&
\le\sum_{n=1}^\infty\frac{\mathrm{Var}\, T_{k_n}}{\varepsilon^2 n_p(k_n)^2}\notag\\
&\le\frac{1}{\varepsilon^2}\sum_{n=1}^\infty \frac{1}{n_p(k_n)^2}\sum_{(i_1,i_2)\in I_2(k_n)}\mathrm{Var}[Y_{i_1}Y_{i_2}]&&\notag\\
&\quad+\frac{1}{\varepsilon^2}\sum_{n=1}^\infty\frac{1}{n_p(k_n)^2}\sum_{\substack{(i_1,i_2),(j_1,j_2)\in I_2(k_n)\\(i_1,i_2)\neq(j_1,j_2)}}\Big(\E[Y_{i_1}Y_{i_2} Y_{j_1}Y_{j_2}]\notag\\
&\qquad-\E [Y_{i_1}Y_{i_2}] \,\E [Y_{j_1}Y_{j_2}]\Big).\label{e:Cheb2}
\end{align}
By
\[\sup_{(i_1,i_2)\in I_2(k_n)}\mathrm{Var}[Y_{i_1} Y_{i_2}]\le 1\]
and $|I_2(k_n)|=n_p(k_n)\sim 3 n_I = 3 k_n$, Lemma~\ref{l:sumkn} gives
\[\frac{1}{\varepsilon^2}\sum_{n=1}^\infty \frac{1}{n_p(k_n)^2}\sum_{(i_1,i_2)\in I_2(k_n)}\mathrm{Var}[Y_{i_1}Y_{i_2}]\le \frac{1}{\varepsilon^2}\sum_{n=1}^\infty \frac{1}{n_p(k_n)}<\infty.\]
In Section~\ref{s:corr}, it is shown that
\begin{align}
\left|\sum_{\substack{(i_1,i_2),(j_1,j_2)\in I_2(k_n)\\(i_1,i_2)\neq (j_1,j_2)}}\Big(\E[Y_{i_1}Y_{i_2} Y_{j_1}Y_{j_2}]-\E [Y_{i_1}Y_{i_2}] \,\E [Y_{j_1}Y_{j_2}]\Big)\right|&=\mathcal{O}(k_n),\label{e:corr2}
\end{align}
and by Lemma~\ref{l:sumkn}, we get that the sum \eqref{e:Cheb2} is finite. By the Borel--Cantelli lemma,
\begin{align}\label{e:knconv2}
\left|\frac{T_{k_n}-\E T_{k_n}}{n_p(k_n)}\right|&\to 0\quad\textrm{a.s.}
\end{align}

For a general $k=n_I$ with $k_n\le k<k_{n+1}$, 
\begin{align}
\frac{T_k-\E T_k}{n_p(k)}\le \left| \frac{T_{k_{n+1}}-\E T_{k_{n+1}}}{n_p(k_{n+1})}\right|\frac{n_p(k_{n+1})}{n_p(k_n)} + \frac{\E T_{k_{n+1}}-\E T_{k_n}}{n_p(k_n)}.\label{e:split}
\end{align}
For a fixed $a>1$, by using \eqref{e:succlacun} again,
\begin{align*}
\frac{n_p(k_{n+1})}{n_p(k_n)}\sim\frac{3k_{n+1}}{3k_n}\le a+\frac{a}{a^n -1},
\end{align*}
and the right-hand side is arbitrarily close to $a$ when $n$ is sufficiently large. Additionally,
\begin{align*}
\frac{\E T_{k_{n+1}}-\E T_{k_n}}{n_p(k_n)}&\le\frac{(n_p(k_{n+1})-n_p(k_n))\sup_{(i_1,i_2)\in I_2(k_n)}\E [Y_{i_1}Y_{i_2}]}{n_p(k_n)},
\end{align*}
hence
\begin{align*}
\mathop{\mathrm{lim\,sup}}_{n\to\infty}\frac{\E T_{k_{n+1}}-\E T_{k_n}}{n_p(k_n)}&\le(a-1)\sup_{(i_1,i_2)\in I_2(k_n)}\E[Y_{i_1}Y_{i_2}].
\end{align*}
Combining this with \eqref{e:knconv2} and \eqref{e:split}, we get
\begin{align*}
\mathop{\mathrm{lim\,sup}}_{k\to\infty}\frac{T_k-\E T_k}{n_p(k)}\le (a-1)\sup_{(i_1,i_2)\in I_2(k_n)}\E[Y_{i_1}Y_{i_2}]\le a-1.
\end{align*}
A similar lower bound can also be attained. Since $a>1$ can be chosen arbitrarily, the SLLN for $Y_i Y_j$, $i\sim j$ (Proposition~\ref{p:second}) holds once we prove the estimate~\eqref{e:corr2}.
\end{proof}

\section{Upper bound on correlations}\label{s:corr}

We prove the estimates \eqref{e:corr} and~\eqref{e:corr2} in greater generality, for every positive integer~$n$. Let $\Theta$ be a compact subset of $[0,1]\times [0,p_c[$, where $p_c$ is the critical probability of bond percolation.
\begin{lem}\label{l:corr}
As $n\to\infty$, it holds
\begin{align*}
\sup_{\theta\in\Theta}\left|\sum_{1\le i\neq j\le n}(\E[Y_i Y_j]-\E Y_i \,\E Y_j)\right|&=\mathcal{O}(n).
\end{align*}
\end{lem}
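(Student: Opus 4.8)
The plan is to control the covariance $\E[Y_i Y_j] - \E Y_i\,\E Y_j$ between two single-vertex colour values by the probability that $i$ and $j$ lie in a common open cluster, and then to sum this bound over all ordered pairs using the exponential decay of the cluster radius in the subcritical regime. The first step is to write $Y_i = X_i \vee \bigvee_{k\leftrightarrow i} X_k$ (dropping the colour superscript $\ell$, as the proof of Proposition~\ref{p:first} does) and observe that $Y_i$ is an \emph{increasing} function of the independent seeding and edge variables. Hence by the FKG inequality $\E[Y_i Y_j] \ge \E Y_i\,\E Y_j$, so the summand is nonnegative and the absolute value can be dropped. To get an upper bound, I would decompose according to whether the event $\{i \leftrightarrow j\}$ holds: on the complement, $Y_i$ and $Y_j$ are measurable with respect to disjoint sets of underlying variables (the seedings and open edges reachable from $i$ versus from $j$), so conditionally they decorrelate; a clean way to make this rigorous is to note that $1_{\{i\not\leftrightarrow j\}} Y_i Y_j$ can be handled by a BK-type argument, or more simply that
\[
\E[Y_i Y_j] - \E Y_i\,\E Y_j \le \P(i\leftrightarrow j)
\]
because $Y_i Y_j \le (Y_i Y_j)1_{\{i\leftrightarrow j\}} + (Y_i Y_j)1_{\{i\not\leftrightarrow j\}}$ and on $\{i\not\leftrightarrow j\}$ the product factorises into independent pieces whose expectations are at most $\E Y_i$ and $\E Y_j$. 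I would spell this factorisation out via conditioning on the open cluster $C_i$ of $i$: given $C_i = A$ with $j\notin A$, $Y_i$ depends only on variables inside $A$ while $Y_j$ depends only on variables outside $A$, so the two are conditionally independent, and taking expectations and then removing the conditioning yields the stated bound after also throwing in the $\{i\leftrightarrow j\}$ part crudely.

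With $\E[Y_i Y_j] - \E Y_i\,\E Y_j \le \P(i\leftrightarrow j)$ in hand, the sum is bounded by
\[
\sum_{1\le i\neq j\le n} \P(i\leftrightarrow j) = \sum_{i=1}^n \sum_{j\neq i} \P(i\leftrightarrow j) \le \sum_{i=1}^n \E|C_i|,
\]
where $C_i$ is the open cluster containing $i$ (the inner sum over $j$ in the infinite lattice dominates the finite one). Aizenman--Newman \cite{Aizenman_Newman_1984} and the exponential decay of the radius of an open cluster for $\mu < p_c$ \cite[Chapter~6]{Grimmett_1999} give $\E_\mu|C| \le M(\mu) < \infty$, and since $\Theta$ is compact and contained in $[0,1]\times[0,p_c[$, its projection onto the $\mu$-coordinate is a compact subset of $[0,p_c[$, bounded away from $p_c$; the map $\mu\mapsto \E_\mu|C|$ is increasing, so $\sup_{\theta\in\Theta}\E_\mu|C| \le M(\mu^*) =: M < \infty$ where $\mu^* = \max\{\mu : \theta\in\Theta\}$. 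Therefore
\[
\sup_{\theta\in\Theta}\left|\sum_{1\le i\neq j\le n}(\E[Y_i Y_j]-\E Y_i\,\E Y_j)\right| \le n\,M = \mathcal{O}(n),
\]
which is exactly the claim.

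The main obstacle I anticipate is the conditioning argument that turns ``$i$ and $j$ are not connected'' into genuine decorrelation of $Y_i$ and $Y_j$: one has to be careful that $C_i$ is a stopping-set-type object so that conditioning on $\{C_i = A\}$ leaves the seeding and edge variables outside $A$ with their original product law, and that $Y_j$ on $\{j\notin A\}$ really is a function only of those outside variables (the open path from a colour source to $j$ cannot pass through $A$ once $C_i = A$ is fixed, since the boundary edges of $A$ are known to be closed). This is the percolation-theoretic heart of the estimate; once it is established, the summation step is routine, and the same scheme — with $Y_i Y_j$ replaced by $Y_{i_1}Y_{i_2}Y_{j_1}Y_{j_2}$ and $\P(i\leftrightarrow j)$ replaced by a sum of connection probabilities between the four vertices — will deliver \eqref{e:corr2} and hence Proposition~\ref{p:second}.
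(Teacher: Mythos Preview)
Your argument is correct and follows the same overall architecture as the paper: FKG for nonnegativity, the bound $\E[Y_iY_j]-\E Y_i\,\E Y_j\le \P(i\leftrightarrow j)$, summation to $\sum_i\E|C(i)|$, and exponential decay of the cluster size uniformly over the compact $\Theta$. The one genuine difference is how the covariance bound is obtained. The paper introduces an auxiliary vertex $\infty$ and source edges so that $\{Y_i=1\}=\{i\leftrightarrow^*\infty\}$ becomes an increasing connection event, and then applies the BK inequality to $\{Y_i=1\}\circ\{Y_j=1\}$; non-disjoint cooccurrence forces $i\leftrightarrow j$ in the non-extended lattice. You instead condition on the open cluster $C_i=A$ and exploit that on $\{j\notin A\}$ the variables determining $Y_i$ and $Y_j$ are disjoint. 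Both routes land on the same inequality. Your approach is more elementary in that it avoids the extended-graph construction and the BK machinery, but you should be a little more careful with the phrase ``whose expectations are at most $\E Y_i$ and $\E Y_j$'': in fact $\E[Y_i\mid C_i=A]=1-(1-\lambda)^{|A|}$ need not be below $\E Y_i$ for each $A$; the argument works because $\E[Y_j\mid C_i=A]\le\E Y_j$ (the cluster of $j$ can only shrink when boundary edges of $A$ are closed), and then summing $\sum_{A\not\ni j}\P(C_i=A)\,\E[Y_i\mid C_i=A]\le\E Y_i$ recovers the bound. The paper's BK route sidesteps this bookkeeping at the cost of setting up the extended graph.
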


\begin{lem}\label{l:corr2}
As $n\to\infty$, it holds
\begin{align*}
\sup_{\theta\in\Theta}\left|\sum_{\substack{(i_1,i_2),(j_1,j_2)\in I_2(n)\\(i_1,i_2)\neq (j_1,j_2)}}\Big(\E[Y_{i_1}Y_{i_2} Y_{j_1}Y_{j_2}]-\E [Y_{i_1}Y_{i_2}] \,\E [Y_{j_1}Y_{j_2}]\Big)\right|&=\mathcal{O}(n).
\end{align*}
\end{lem}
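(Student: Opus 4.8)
The plan is to control the covariance between $Y^\ell_i$ and $Y^\ell_j$ (and the analogous pair-products) by a quantity that decays exponentially in the lattice distance $\|i-j\|$, and then to sum these bounds over the lattice. The key structural observation is that $Y^\ell_i=1$ precisely when the open cluster containing $i$ (or $i$ itself) receives colour~$\ell$ at some vertex; since the colourings are independent across vertices and independent of the edge process, $Y^\ell_i$ is an increasing function of the underlying $\{0,1\}$-valued edge and seeding variables. Hence the FKG inequality gives $\E[Y^\ell_i Y^\ell_j]\ge \E[Y^\ell_i]\,\E[Y^\ell_j]$, so the covariances have a definite sign, but what we really need is an \emph{upper} bound. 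For that I would use the van~den~Berg--Kesten (BK) inequality together with the observation that the event $\{Y^\ell_i=1,\ Y^\ell_j=1\}$ is \emph{almost} the disjoint occurrence of $\{Y^\ell_i=1\}$ and $\{Y^\ell_j=1\}$: the only way it fails to be a disjoint occurrence is if $i$ and $j$ lie in a common open cluster that carries colour~$\ell$. Quantitatively,
\begin{align*}
0\ \le\ \E[Y^\ell_i Y^\ell_j]-\E[Y^\ell_i]\,\E[Y^\ell_j]\ \le\ \P_\theta(i\leftrightarrow j),
\end{align*}
because writing $A=\{Y^\ell_i=1\}$, $B=\{Y^\ell_j=1\}$, on the complement of $\{i\leftrightarrow j\}$ the certificates for $A$ and $B$ use disjoint edge/vertex sets, so $\P(A\cap B)\le \P(A\circ B)+\P(i\leftrightarrow j)\le \P(A)\P(B)+\P(i\leftrightarrow j)$ by BK. In the subcritical regime $\mu<p_c$, the Aizenman--Newman exponential-decay theorem gives constants $C,\gamma>0$, depending only on the lattice, with $\P_\theta(i\leftrightarrow j)\le C e^{-\gamma\|i-j\|}$ uniformly over $\mu$ in the compact set $\Theta$ (uniformity follows because the decay rate is monotone in $\mu$ and continuous up to, but excluding, $p_c$; one takes the worst case $\mu=\max_{\theta\in\Theta}\mu<p_c$).

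Granting this pointwise bound, Lemma~\ref{l:corr} follows by a counting argument:
\begin{align*}
\sup_{\theta\in\Theta}\left|\sum_{1\le i\neq j\le n}\!\big(\E[Y_iY_j]-\E Y_i\,\E Y_j\big)\right|\ \le\ \sum_{i=1}^n\ \sum_{j:\, j\ne i} C e^{-\gamma\|i-j\|}\ \le\ n\cdot C\sum_{x\in L\setminus\{0\}} e^{-\gamma\|x\|},
\end{align*}
and the last sum over the triangular (or square) lattice converges because the number of lattice points at distance $\approx r$ grows only polynomially (linearly) in $r$ while $e^{-\gamma r}$ decays exponentially. This gives the $\mathcal{O}(n)$ bound, uniformly in $\theta\in\Theta$. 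For Lemma~\ref{l:corr2} the same scheme applies to the four-point covariance: the event $\{Y_{i_1}Y_{i_2}=1,\ Y_{j_1}Y_{j_2}=1\}$ decomposes as a disjoint occurrence unless some open cluster connects the pair $\{i_1,i_2\}$ to the pair $\{j_1,j_2\}$, so that the covariance is bounded by a sum of at most four connection probabilities $\P_\theta(i_a\leftrightarrow j_b)$, each again $\le C e^{-\gamma\,\mathrm{dist}(\{i_1,i_2\},\{j_1,j_2\})}$. Summing over the $\mathcal{O}(n)$ edges $(i_1,i_2)\in I_2(n)$ and then, for each fixed edge, over all edges $(j_1,j_2)$ at a given separation (again polynomially many at distance $r$, exponentially suppressed) yields $\mathcal{O}(n)$.

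The main obstacle I anticipate is making the BK step fully rigorous for the \emph{infinite}-volume variable $Y^\ell_i$, since BK is naturally stated for events in finitely many coordinates: the colouring $X^\ell_j$ ranges over all $j\in L$, and the event $\{Y^\ell_i=1\}$ is not finitely determined. The clean way around this is to work with $\widetilde{Y}^\ell_i$ on the finite box $I(n)$ (where BK applies verbatim to the finitely many edge and colour variables indexed by $I(n)$ and its incident edges) and to obtain the claim for $Y$ by a monotone limit $I(n)\uparrow L$, using that $\P_\theta(i\widetilde{\leftrightarrow} j\text{ in }I(n))\uparrow \P_\theta(i\leftrightarrow j)\le Ce^{-\gamma\|i-j\|}$ with the same constants; the excerpt already flags that all statements hold for $\widetilde{Y}$ as well, so this detour is harmless. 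A secondary point needing care is the \emph{uniformity} in $\theta$: one must check that $C$ and $\gamma$ can be chosen independently of $\lambda^1,\dots,\lambda^{n_c}$ (immediate, since connection probabilities depend only on $\mu$) and of $\mu\in[0,\mu_{\max}]$, which reduces to the standard fact that subcritical exponential decay holds with locally uniform rate — a consequence of the sharp-threshold/exponential-decay theory cited in the paper.
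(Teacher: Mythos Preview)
Your proposal is correct and follows essentially the same route as the paper: FKG for nonnegativity, then BK to reduce the covariance to a connection probability between the two vertex pairs, and finally subcritical exponential decay to sum. The only cosmetic difference is that the paper bounds $\sum_{j\neq i}\P(i\leftrightarrow j)=\E[|C(i)|]-1$ directly via the exponential tail of the cluster-size distribution, whereas you invoke exponential decay of the two-point function $\P(i\leftrightarrow j)\le Ce^{-\gamma\|i-j\|}$ and sum over shells; both are equivalent consequences of subcritical sharpness and yield the same uniform $\mathcal{O}(n)$ bound.
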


For background, first we recapitulate from the fundamentals of percolation theory the meaning of increasing events, the FKG inequality, disjoint occurrence, the BK inequality and pivotality \cite[Chapter~2]{Grimmett_1999}. It is well known that these concepts do not rely on the specific structure of the lattice graph and can be cast more generally in terms of functions of Boolean variables. 

In this vein, one can consider a probability space $(\Gamma,\mathcal{F},\P)$ with sample space $\Gamma =\{0,1\}^S$ ($S$ is finite or at most countably infinite) where the set of events $\mathcal{F}$ is the $\sigma$-algebra generated by the finite-dimensional cylinder sets and the measure is a product measure
\[\P=\prod_{s\in S} \nu_s\]
where $\nu_s$ is specified by some vector $(p(s))_{s\in S}\in [0,1]^S$ via
\[\nu_s(\omega(s)=1)=p(s),\quad\nu_s(\omega(s)=0)=1-p(s)\]
for sample vectors $(\omega(s))_{s\in S}\in \{0,1\}^S$ \cite[Chapter~2, p.~33]{Grimmett_1999}.

In our application, we have already fixed a colour $\ell\in\{1,\dots,n_c\}$ and look at colours independently. We extend the set of vertices $L$ with an additional vertex that we call $\infty^\ell$, or simply $\infty$ when the colour is fixed and unimportant: $L^*:=L\cup\{\infty\}$. We also extend the edge set of the triangular lattice $L_2$ with edges between each vertex and $\infty$, and the value assigned to such an edge indicates the presence or absence of seeding. We call these edges \emph{source edges}. For the source edges, $p(s)=\lambda^\ell$, and for the edges of the lattice which represent contamination, $p(s)=\mu$. The interpretation is that $Y^\ell_i=1$ if and only if $i\leftrightarrow^* \infty^\ell$, where the asterisk refers to connection in the extended graph.

An event $A\in\mathcal{F}$ of the $\sigma$-algebra is called \emph{increasing}, if whenever $\omega \le \omega'$, $\omega\in A$ implies $\omega'\in A$.

\begin{thm}[FKG inequality \cite{Fortuin_Kasteleyn_Ginibre_1971},{\cite[pp. 34--36]{Grimmett_1999}}] If $A$ and $B$ are increasing events in $\mathcal{F}$, then $\P(A\cap B)\ge\P(A)\P(B)$.
\end{thm}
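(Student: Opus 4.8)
The plan is to establish the equivalent, mildly stronger statement that $\E[fg]\ge\E[f]\,\E[g]$ for all bounded increasing random variables $f,g$ on $(\Gamma,\mathcal{F},\P)$; the inequality for increasing events $A$ and $B$ then follows immediately by taking $f=\mathbf{1}_A$ and $g=\mathbf{1}_B$, which are increasing precisely because $A$ and $B$ are. I would first treat the case of a finite index set $S$ by induction on $|S|$, and afterwards pass to a countably infinite $S$ by a martingale argument.

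For the base case $|S|=1$, $f$ and $g$ are increasing functions of a single Boolean coordinate $\omega(s_1)$. Drawing two independent samples $\omega,\omega'$ from $\P$, the product $\big(f(\omega)-f(\omega')\big)\big(g(\omega)-g(\omega')\big)$ is pointwise nonnegative, since both factors are governed by the ordering of $\omega(s_1)$ against $\omega'(s_1)$ and therefore carry the same sign. Taking expectations and expanding the product yields $2\big(\E[fg]-\E[f]\,\E[g]\big)\ge 0$.

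For the inductive step, assuming the claim for product spaces over index sets of size $m-1$ and taking $|S|=m$, I would single out a coordinate $s_m$ and condition on $\omega(s_m)$, writing $F:=\E[f\mid\omega(s_m)]$ and $G:=\E[g\mid\omega(s_m)]$. The conditional law of the remaining coordinates is again a product measure of the prescribed form on $m-1$ indices, and $f,g$ stay increasing in those coordinates, so the induction hypothesis gives $\E[fg\mid\omega(s_m)]\ge F\,G$ almost surely; moreover $F$ and $G$ are increasing functions of $\omega(s_m)$ because conditional expectation preserves monotonicity. The tower property followed by the $|S|=1$ case applied to $F$ and $G$ then gives
\begin{align*}
\E[fg]=\E\big[\E[fg\mid\omega(s_m)]\big]\ge\E[F\,G]\ge\E[F]\,\E[G]=\E[f]\,\E[g].
\end{align*}
For a countably infinite $S=\{s_1,s_2,\dots\}$, set $\mathcal{F}_n:=\sigma(\omega(s_1),\dots,\omega(s_n))$ and $f_n:=\E[f\mid\mathcal{F}_n]$, $g_n:=\E[g\mid\mathcal{F}_n]$; these are bounded, increasing and depend on finitely many coordinates, so the finite case yields $\E[f_n g_n]\ge\E[f_n]\,\E[g_n]=\E[f]\,\E[g]$, and letting $n\to\infty$ via the martingale convergence theorem (with uniform boundedness, so that $f_n g_n\to fg$ in $L^1$) closes the argument. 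The step demanding the most care is the induction: one must verify both that freezing $\omega(s_m)$ leaves a product measure of exactly the type the hypothesis covers, and that $F$ and $G$ inherit monotonicity in $\omega(s_m)$ from $f$ and $g$.
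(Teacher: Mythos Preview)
The paper does not supply its own proof of the FKG inequality: the theorem is quoted with citations to Fortuin--Kasteleyn--Ginibre and to Grimmett's monograph, and is then used as a black box in the proof of Lemma~\ref{l:corr}. There is therefore nothing in the paper to compare against line by line.

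That said, your argument is correct and is exactly the classical proof in the product-measure setting (often attributed to Harris in this special case) as presented, for instance, in the very pages of Grimmett that the paper cites. The induction on the number of coordinates, conditioning on a single coordinate and applying the one-dimensional Chebyshev-type base case to the conditional means, is the standard route; your extension to countably infinite $S$ via a bounded-martingale limit is also standard and sound. The two points you flag as needing care---that conditioning on $\omega(s_m)$ leaves a product measure on the remaining coordinates, and that $F=\E[f\mid\omega(s_m)]$ is increasing in $\omega(s_m)$---are both immediate here because the underlying measure is a product of Bernoulli marginals: the first holds by independence, and the second because increasing $\omega(s_m)$ increases $f$ pointwise in the integrand. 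In short, you have reproduced the textbook proof that the paper defers to.
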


Let $e_1,e_2,\dots,e_N$ be $N$ distinct edges of the graph, and $A,B\in\mathcal{F}$ two increasing events which depend on the vector of the states of these $N$ edges $\omega=(\omega(e_1),\dots,\omega(e_N))$ only. Such vectors $\omega$ are characterised uniquely by the set of edges with value~$1$: $J(\omega)=\big\{e_i\,\big|\,i\in \{1,\dots,N\},\, \omega(e_i)=1\big\}$.

For the increasing events $A,B$, the event $A\circ B$ (we say \emph{$A$ and $B$ occur disjointly}) is the set of all $\omega\in\Gamma$ for which there exists an $H\subseteq J(\omega)$ such that $\omega'$ determined by $J(\omega')=H$ belongs to $A$, and $\omega''$ determined by $J(\omega'')=J(\omega)\setminus H$ belongs to $B$. In words, $A\circ B$ is the set of assignments of $0$ and $1$ to the edges for which there exist two disjoint sets of edges assigned the value~$1$ (\emph{open} edges) such that the first such set ensures the occurrence of event~$A$ and the second set ensures the occurrence of~$B$. It is easy to verify that $A\circ B$ is also increasing and $A\circ B\subseteq A\cap B$.

The classical example for disjoint occurrence is when $A$ is the event that there is an open path joining $i_1$ to~$j_1$ within the finite subgraph given by $\{e_1,\dots,e_N\}$ and $B$ is the event that there is an open path between $i_2$ and~$j_2$ within the same finite subgraph. Then $A\circ B$ is the event that there exist two edge-disjoint paths, the first between $i_1$ and~$j_1$ and another one joining $i_2$ to~$j_2$.

\begin{thm}[BK inequality \cite{vandenBerg_Kesten_1985},{\cite[pp. 37--41]{Grimmett_1999}}] If $A$ and $B$ are increasing events in $\mathcal{F}$, then $\P(A\circ B)\le\P(A)\P(B)$.
\end{thm}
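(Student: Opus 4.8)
The plan is to prove the BK inequality by the duplication (or interpolation) argument of van den Berg and Kesten, rather than by a naive induction on the number of edges. It is worth flagging at the outset why the obvious induction fails, since this motivates the whole construction. If one splits off the coordinate $e_N$ and writes $\P(A\circ B)$ as a mixture over $\omega(e_N)\in\{0,1\}$ of disjoint-occurrence probabilities of the two conditioned events on $e_1,\dots,e_{N-1}$, then bounding the ``$\omega(e_N)=1$'' contribution by a union bound over ``the open edge $e_N$ is used in the witness for $A$'' and ``used in the witness for $B$'' is too lossy: the resulting estimate compares $(1-p)a_0b_0$ with $(1-p)^2a_0b_0$ and points the wrong way. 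The correct argument instead couples $A\circ B$ to the independent product through a monotone chain of intermediate events.

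Concretely, I would first record the witness description of disjoint occurrence: because $A$ and $B$ are increasing, $\omega\in A$ iff $J(\omega)$ contains a set $K$ with $\mathbf 1_K\in A$ (where $\mathbf 1_K$ is the configuration whose open set is $K$), and $\omega\in A\circ B$ iff there are disjoint $K,L\subseteq J(\omega)$ with $\mathbf 1_K\in A$ and $\mathbf 1_L\in B$. Then I would introduce two independent configurations $\omega^A,\omega^B$ on $\{0,1\}^N$, each carrying the given product law, and for $0\le r\le N$ define the event $W_r$ that there exist $K,L$ with $\mathbf 1_K\in A$, $\mathbf 1_L\in B$, $K\subseteq J(\omega^A)$, $L\subseteq J(\omega^B)$, where the first $r$ coordinates are \emph{split} (read independently from $\omega^A$ and $\omega^B$, with no disjointness imposed there) while coordinates $r+1,\dots,N$ are \emph{shared} (there $\omega^A=\omega^B$ and $K,L$ are required to be disjoint). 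The two endpoints are transparent: $W_0$ forces $\omega^A=\omega^B$ and disjointness everywhere, so $\P(W_0)=\P(A\circ B)$; while $W_N$ imposes no disjointness and lets $A$ depend only on $\omega^A$ and $B$ only on $\omega^B$, so independence gives $\P(W_N)=\P(A)\P(B)$.

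The heart of the proof is the monotonicity $\P(W_r)\le\P(W_{r+1})$, whose chaining yields $\P(A\circ B)=\P(W_0)\le\P(W_N)=\P(A)\P(B)$. To establish it I would condition on all coordinates other than $e_{r+1}$; these carry identical roles and identical laws under $W_r$ and $W_{r+1}$ (coordinates $1,\dots,r$ split, coordinates $r+2,\dots,N$ shared, in both cases), so the comparison collapses to a single coordinate. Writing $p=p(e_{r+1})$ and expressing both conditional probabilities through the Boolean conditions ``witnesses exist using $e_{r+1}$ in neither, in $K$ only, in $L$ only, or in both $K$ and $L$'' (the last being available only once $e_{r+1}$ is split), one gets $\P(W_r\mid\text{rest})=(1-p)\,c_{00}+p\,u$ against $\P(W_{r+1}\mid\text{rest})=(1-p)^2c_{00}+p(1-p)(v_K+v_L)+p^2w$, where $c_{00}\le v_K,v_L\le u\le w$ are the nested indicators of those conditions.

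The single step I expect to be the real obstacle is verifying that this conditional difference is nonnegative; everything else is bookkeeping. I would dispatch it by a short case analysis over which of the conditions $C_{00},C_K,C_L,C_{KL}$ hold, using the nesting constraints: in each admissible case the difference reduces to a manifestly nonnegative multiple of $p$, for instance it vanishes when $C_{00}$ holds, equals $p(1-p)$ when $C_K$ and $C_L$ hold but not $C_{00}$, and equals $p^2$ when only $C_{KL}$ holds. Integrating $\P(W_r\mid\text{rest})\le\P(W_{r+1}\mid\text{rest})$ over the remaining coordinates completes the monotone step, and the chain then gives the theorem. Since $A$ and $B$ are assumed to depend only on the finitely many edges $e_1,\dots,e_N$, no separate passage from finite to countable cylinder events is required.
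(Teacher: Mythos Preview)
The paper does not prove the BK inequality; it merely quotes it as a known theorem with citations to van~den~Berg--Kesten (1985) and Grimmett's book, and then uses it as a black box in the proofs of Lemmas~\ref{l:corr} and~\ref{l:corr2}. So there is no ``paper's own proof'' to compare against.

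That said, your sketch is the standard van~den~Berg--Kesten duplication argument and is essentially correct. One phrasing is slightly misleading: you write that $W_0$ ``forces $\omega^A=\omega^B$''. Since $\omega^A$ and $\omega^B$ were introduced as independent, this cannot literally be so; what actually happens is that $W_0$ \emph{depends only on} $\omega^A$ (all shared coordinates are read from one configuration), so $\P(W_0)=\P(\omega^A\in A\circ B)=\P(A\circ B)$ by marginalising out the unused~$\omega^B$. Apart from that, the chain $\P(W_r)\le\P(W_{r+1})$ and your case analysis with $c_{00}\le v_K,v_L\le u=\max(v_K,v_L)\le w$ are sound; the conditional difference $p\bigl[(1-p)(v_K+v_L-c_{00})+pw-u\bigr]$ is indeed nonnegative in every admissible configuration of the indicators, exactly as you describe.
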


The validity of the inequality extends to the existence of arbitrarily long (but finite length) edge-disjoint open paths, which is what we need it for, by taking a sequence of growing, nested subsets of~$L$ \cite[p.~38]{Grimmett_1999}.

The notion of pivotality is not used until Section~\ref{s:ULLN}. For any event~$A$ an edge~$e$ is pivotal if its open or closed state is crucial to whether $A$ occurs or not. In more detail, the edge $e$ is pivotal for the pair $(A,\omega)$, if for the indicator function of~$A$, $\chi_A(\omega)\neq\chi_A(\omega')$, where the configuration $\omega'\in \{0,1\}^S$ is defined by $\omega'(e)=1-\omega(e)$, and $\omega'(f)=\omega(f)$ for every edge $f\neq e$. The event that $e$ \emph{is pivotal for} $A$ is the set of $\omega$ for which $e$ is pivotal for $(A,\omega)$.

\begin{proof}[Lemma~\ref{l:corr}] In the extended lattice graph that has source edges with weight zero or one at every vertex for seeding, the event $\{Y_i=1\}$ for $i\in I$ is increasing because it is increasing in both seeding (source edges) and contamination edges. For any $i,j\in I$,
\begin{align*}
\E[Y_i Y_j]-\E Y_i \,\E Y_j&=\P(Y_i Y_j=1)-\P(Y_i=1) \,\P(Y_j=1)\ge 0
\end{align*}
by the FKG inequality. Hence, for every $\theta\in\Theta$,
\begin{align*}
\sum_{1\le i\neq j\le n}(\E[Y_i Y_j]-\E Y_i \,\E Y_j)&\ge 0.
\end{align*}

For the upper bound, consider that
\begin{align}
\P(Y_i Y_j=1)-\P(Y_i=1) \,\P(Y_j=1)&=\P\big(\{Y_i=1\}\circ\{Y_j=1\}\big)-\P(Y_i=1) \,\P(Y_j=1)\notag\\
&\quad +\P\Big(\{Y_i Y_j=1\}\setminus \{Y_i=1\}\circ\{Y_j=1\}\Big)\notag\\
&\le \P\Big(\{Y_i Y_j=1\}\setminus \{Y_i=1\}\circ\{Y_j=1\}\Big)\label{e:BK}
\end{align} 
by the BK inequality. Cooccurrence of $\{Y_i=1\}$ and $\{Y_j=1\}$ which is not disjoint is one where $i$ and $j$ are in the same component in the edge set on the non-extended lattice:
\[\{Y_i Y_j=1\}\setminus \{Y_i=1\}\circ\{Y_j=1\} \subseteq \{i\leftrightarrow j\}.\] 
We show that
\begin{equation}
\sum_{1\le i\neq j\le n}\P(i\leftrightarrow j)=\mathcal{O}(n)\label{e:sumofpaths}
\end{equation}
for $\mu<p_c$, and uniformly so for $\mu\in[0,p_c-\varepsilon]$ for every $\varepsilon>0$. This follows from the exponential decay of the cluster size distribution and it will complete the proof of Lemma~\ref{l:corr}.

Let $C(i)$ denote the set of vertices in the component of~$i\in L$ according to the non-extended edge set of~$L$. Then
\begin{align*}
\sum_{1\le i\neq j\le n}\P(i\leftrightarrow j)&=\sum_{1\le i\le n}\,\sum_{\substack{1\le j\le n\\j\neq i}}\E\chi_{\{i\leftrightarrow j\}}=\sum_{1\le i\le n}\E[|C(i)|-1].
\end{align*}
\begin{thm}[Exponential decay of the cluster size distribution \cite{Aizenman_Newman_1984}, {\cite[Chapter~6]{Grimmett_1999}}] For $\mu\in]0,p_c[$, there exists $g(\mu)>0$ such that for all $k\ge 1$ and $i\in I$, for the bond percolation with parameter~$\mu$, it holds that $\P(|C(i)|\ge k)\le \mathrm{e}^{-kg(\mu)}$. 
\end{thm}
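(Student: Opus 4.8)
The statement is a cornerstone of subcritical percolation theory, so the plan is not to reprove it but to quote it from \cite{Aizenman_Newman_1984} and \cite[Chapter~6]{Grimmett_1999}; it is nonetheless worth recording the structure of the classical argument and flagging the step that carries the real difficulty. Write $C(i)$ for the open cluster of $i$ in the infinite lattice. By translation invariance $\P_\mu(|C(i)|\ge k)$ does not depend on $i$, and the cluster of $i$ inside any finite $I$ (the version relevant for $\widetilde{Y}$) is contained in $C(i)$, so it is enough to bound $\P_\mu(|C(i)|\ge k)$ for the infinite-lattice cluster; the resulting $g(\mu)$ then works uniformly in $i$ and for $\widetilde{Y}$ as well.

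I would split the proof into two steps. \emph{Step~1}: reduce exponential decay of the cluster \emph{volume} to finiteness of the mean cluster size $\chi(\mu):=\E_\mu|C(i)|$. The key point is that, on account of the extra structure of percolation (rather than by any naive branching-process comparison, which already fails in the regime where $\mu$ times the lattice degree exceeds~$1$), finiteness of $\chi(\mu)$ \emph{self-improves}: the Aizenman--Newman tree-graph inequalities bound the moments $\E_\mu[|C(i)|^k]$ by a polynomial in $\chi(\mu)$ whose degree grows only linearly in $k$ and whose combinatorial prefactor grows only factorially, so that $\E_\mu\mathrm{e}^{\gamma|C(i)|}=\sum_{k\ge0}\gamma^k\E_\mu[|C(i)|^k]/k!<\infty$ for all sufficiently small $\gamma>0$; Markov's inequality then gives $\P_\mu(|C(i)|\ge k)\le\E_\mu\mathrm{e}^{\gamma|C(i)|}\,\mathrm{e}^{-\gamma k}$, i.e.\ the claim with any $g(\mu)<\gamma$. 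Since $\mu\mapsto\chi(\mu)$ is monotone and, by Step~2, finite on $[0,p_c[$, it is bounded on $[0,p_c-\varepsilon]$, so $g(\mu)$ can be taken bounded away from $0$ there. (For \eqref{e:sumofpaths} in Lemma~\ref{l:corr} only the weaker fact $\E_\mu[|C(i)|-1]=\chi(\mu)-1<\infty$ is needed; the full exponential tail, equivalently finiteness of \emph{all} moments of $|C(i)|$, is what will make the analogous four-point sum in Lemma~\ref{l:corr2} be $\mathcal{O}(n)$.)

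\emph{Step~2}, the main obstacle, is to show $\chi(\mu)<\infty$ for every $\mu<p_c$ --- equivalently that the truncation point $p_T:=\sup\{\mu:\chi(\mu)<\infty\}$ equals $p_c$. Only the inclusion $p_T\ge p_c$ requires work, and this is precisely the sharpness of the phase transition, substantially deeper than anything else in the present paper; I would invoke one of the two classical routes. Menshikov's route studies $g_n(\mu):=\P_\mu\big(i\leftrightarrow\partial B(n)\big)$, the probability that $i$ is joined by an open path to the boundary of the lattice box of radius $n$: Russo's formula expresses $g_n'(\mu)$ through the expected number of pivotal edges, a geometric estimate bounds that number in terms of the $g_m$ with $m<n$, and the resulting differential inequality for $\log g_n$ forces $g_n$ to decay exponentially in $n$ whenever $\mu<p_c$; exponential decay of the radius then yields $\chi(\mu)<\infty$. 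The Aizenman--Barsky route instead adjoins a ghost field $h$, works with the magnetization $M(\mu,h)=\E_{\mu,h}\big(1-\mathrm{e}^{-h|C(i)|}\big)$, and derives three differential inequalities in $(\mu,h)$ from which one extracts that $\chi(\mu)=\partial_h M(\mu,0^{+})$ is finite on $[0,p_c[$. Either argument genuinely uses subcriticality, and it is the reason $\mu$ is confined to $[0,p_c[$ throughout the paper.
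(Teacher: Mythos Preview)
Your approach matches the paper's exactly: the paper does not prove this theorem at all but simply quotes it from \cite{Aizenman_Newman_1984} and \cite[Chapter~6]{Grimmett_1999}, and you correctly recognise this. Your supplementary sketch of the classical argument (tree-graph moment bounds reducing exponential decay to $\chi(\mu)<\infty$, then sharpness via Menshikov or Aizenman--Barsky) is accurate and goes well beyond what the paper offers.

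One small correction to your parenthetical remark: the four-point sum in Lemma~\ref{l:corr2} does \emph{not} require the full exponential tail or higher moments of $|C(i)|$. If you inspect the paper's proof of Lemma~\ref{l:corr2}, the bound is reduced---via the BK inequality and a union bound over the four endpoints---to sums of the form $\sum_{j\neq i}\P(i\leftrightarrow j)=\E[|C(i)|]-1$, exactly as in Lemma~\ref{l:corr}. So for both lemmas only $\chi(\mu)<\infty$ is genuinely needed; the paper happens to \emph{derive} that finiteness by summing the exponential tail bound in~\eqref{e:samecomp}, but logically the first moment suffices throughout Section~\ref{s:corr}.
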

Take $\mu^*=p_c-\varepsilon$. As $\P(|C(i)|\ge k)$ is nondecreasing in~$\mu$, we get a uniform bound in~$\theta\in\Theta$ if the bound is valid for~$\mu^*$:
\begin{align}
\sum_{i=1}^n \E[|C(i)|-1]&=\sum_{i=1}^n\left(\left(\sum_{k=1}^\infty P(|C(i)|\ge k)\right)-1\right)\notag\\
&\le \sum_{i=1}^n \sum_{k=1}^\infty \mathrm{e}^{-kg(\mu^*)} = n\frac{1}{\mathrm{e}^{g(\mu^*)}-1}.\label{e:samecomp} 
\end{align}
This proves Lemma~\ref{l:corr}, which in turn completes the proof of Proposition~\ref{p:first}.
\end{proof}



To go from the case of $Y$ to~$\widetilde{Y}$, first we couple the realisations of $(Y_i)_{i\in L}$ and~$\big(\widetilde{Y}_i\big)_{i\in I}$ with varying lattices~$I$ (and later with varying parameter vectors) by defining them via shared random variables $(U^\ell_i)_{i\in L,\ell\in\{1,2,\dots,n_c\}}$ and $(V_{ij})_{(i,j)\in L_2}$ that are independent and all uniformly distributed on $[0,1]$. For $\theta=(\lambda^1,\dots,\lambda^{n_c},\mu)\in\Theta$, $i\in L$, $(i,j)\in L_2$ and $\ell\in\{1,2,\dots,n_c\}$, the seeding is defined by $X^\ell_i:=\chi_{\{U^\ell_i<\lambda^\ell\}}$, and edges are open according to $\xi_{ij}:=\chi_{\{V_{ij}<\mu\}}$.

Let us drop the superscript $\ell$ again. Notice that any $\widetilde{Y}_i$ can increase when $I$ is increased. In the proof of Proposition~\ref{p:first}, the only occasion where $Y_i$ from different~$I$ are compared is inequality~\eqref{e:genk}. We mark the lattice size as a variable in the superscript of~$\widetilde{Y}_i$. Observe that $\widetilde{Y}^{k_n}_i\le \widetilde{Y}^k_i\le \widetilde{Y}^{k_{n+1}}_i\le Y_i$ for $k_n\le k < k_{n+1}$ and $i\in I(k_n)$. With $\widetilde{S}^k_n:=\sum_{i=1}^n \widetilde{Y}^k_i$, noting $\widetilde{S}^{k_n}_{k_n}\le \widetilde{S}^k_k\le \widetilde{S}^{k_{n+1}}_{k_{n+1}}$,
\begin{align*}
\frac{\widetilde{S}^k_k-\E \widetilde{S}^k_k}{k}&\le \frac{\widetilde{S}^{k_{n+1}}_{k_{n+1}}-\E \widetilde{S}^{k_n}_{k_n}}{k}\\
&\le\left| \frac{\widetilde{S}^{k_{n+1}}_{k_{n+1}}-\E \widetilde{S}^{k_{n+1}}_{k_{n+1}}}{k_{n+1}}\right|\frac{k_{n+1}}{k_n} + \frac{\E \widetilde{S}^{k_{n+1}}_{k_{n+1}}-\E \widetilde{S}^{k_n}_{k_n}}{k_n},
\end{align*}
where, similarly to inequality~\eqref{e:genk}, the second inequality holds for different reasons when $\widetilde{S}^{k_{n+1}}_{k_{n+1}}-\E \widetilde{S}^{k_n}_{k_n}$ is negative and when not. The first term does not require special treatment. The second term is
\begin{align}\label{e:sdl}
\frac{\E \widetilde{S}^{k_{n+1}}_{k_{n+1}}-\E \widetilde{S}^{k_n}_{k_n}}{k_n}&=\frac{1}{k_n}\sum_{i=k_n+1}^{k_{n+1}} \E \widetilde{Y}^{k_{n+1}}_i+ \frac{1}{k_n}\sum_{i=1}^{k_n}\left(\E \widetilde{Y}^{k_{n+1}}_i-\E \widetilde{Y}^{k_n}_i\right).
\end{align}
Here
\begin{align*}
\frac{1}{k_n}\sum_{i=k_n+1}^{k_{n+1}} \E \widetilde{Y}^{k_{n+1}}_i&\le \frac{k_{n+1}-k_n}{k_n}\sup_{i\in I(k_{n+1})} \E \widetilde{Y}^{k_{n+1}}_i
\end{align*}
is dealt with as in the original proof of Proposition~\ref{p:first}. For the other term of~\eqref{e:sdl},
\begin{align*}
\frac{1}{k_n}\sum_{i=1}^{k_n}\left(\E \widetilde{Y}^{k_{n+1}}_i-\E \widetilde{Y}^{k_n}_i\right)&\le \frac{1}{k_n}\sum_{i=1}^{k_n}\left(\E Y_i-\E \widetilde{Y}^{k_n}_i\right).
\end{align*}
According to the next proposition, this vanishes in the limit, leaving us with
\[\mathop{\mathrm{lim\,sup}}_{k\to\infty}\frac{\widetilde{S}^k_k-\E \widetilde{S}^k_k}{k} \le a-1,\]
as required.

\begin{prop}\label{p:means}
For a compact subset $\Theta\subset [0,1]\times [0,p_c[$,
\[\sup_{\theta\in\Theta}\frac{1}{n_I}\left|\sum_{i\in I}\E_\theta \widetilde{Y}_i-\sum_{i\in I}\E_\theta Y_i\right|\ \conv{n_I\to\infty}\ 0.\]
\end{prop}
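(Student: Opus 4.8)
The plan is to exploit the coupling of $(Y_i)_{i\in L}$ and $\big(\widetilde Y_i\big)_{i\in I}$ already set up via the shared uniforms $(U^\ell_i)$, $(V_{ij})$. Because $\widetilde Y_i$ is built from the open edges lying inside~$I$ only, whereas $Y_i$ uses all open edges of~$L$, we have $\widetilde Y_i\le Y_i$ pointwise, hence $\E_\theta\widetilde Y_i\le\E_\theta Y_i$ and the absolute value may be dropped:
\[
\frac{1}{n_I}\Big(\sum_{i\in I}\E_\theta Y_i-\sum_{i\in I}\E_\theta\widetilde Y_i\Big)=\frac{1}{n_I}\sum_{i\in I}\P_\theta\big(Y_i=1,\ \widetilde Y_i=0\big).
\]
It therefore suffices to bound $\sum_{i\in I}\P_\theta\big(Y_i=1,\ \widetilde Y_i=0\big)$ by something that is $o(n_I)$ and free of~$\theta$.

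The crux is a deterministic observation: $\{Y_i=1,\ \widetilde Y_i=0\}\subseteq\{C(i)\cap\Delta I\neq\emptyset\}$, where $C(i)$ is the open cluster of~$i$ in the non-extended lattice. Indeed, on this event $X_i=0$, so $Y_i=1$ produces an open path in~$L$ from~$i$ to some seeded vertex; if that path were contained in~$I$, the seeded vertex would be open-connected to~$i$ within~$I$ and we would have $\widetilde Y_i=1$, a contradiction. Hence the path must leave~$I$, and its first vertex outside~$I$ lies in $\Delta I\cap C(i)$. Next I would estimate $\P_\theta\big(C(i)\cap\Delta I\neq\emptyset\big)\le\sum_{v\in\Delta I}\P_\mu(i\leftrightarrow v)$, where $\mu$ is the percolation coordinate of~$\theta$ and the key point is that $\{i\leftrightarrow v\}$ depends on the edge states only, not on the colouring parameter, and is increasing in~$\mu$.

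I would then sum over $i\in I$, interchange the two finite sums, and recognise a mean cluster size:
\[
\sum_{i\in I}\P_\theta\big(Y_i=1,\ \widetilde Y_i=0\big)\ \le\ \sum_{v\in\Delta I}\sum_{i\in I}\P_\mu(i\leftrightarrow v)\ \le\ \sum_{v\in\Delta I}\E_\mu|C(v)|.
\]
With $\mu^*:=\sup_{\theta\in\Theta}\mu<p_c$, monotonicity in~$\mu$ and the exponential decay of the cluster size distribution give $\E_\mu|C(v)|\le\E_{\mu^*}|C(v)|=:M<\infty$, uniformly over~$\Theta$ and independent of~$v$ by translation invariance, exactly as in~\eqref{e:samecomp}. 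Consequently the quantity in the proposition is at most $M\,|\Delta I|/n_I$, which tends to~$0$ by the standing assumption $|\Delta I|/|I|\to0$; the bound does not involve~$\theta$, so the convergence is uniform.

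I expect the only non-routine ingredient to be the combinatorial identification of the discrepancy event $\{Y_i=1,\ \widetilde Y_i=0\}$ with (a subevent of) $\{C(i)\cap\Delta I\neq\emptyset\}$ — the rest is a sum interchange plus the exponential-decay bound already established in the proof of Lemma~\ref{l:corr}. A small point worth stating carefully is that $\{i\leftrightarrow v\}$ ignores the colouring and is monotone in~$\mu$, which is what lets a single constant $M=M(\mu^*)$ serve the whole of~$\Theta$ and thereby deliver uniformity for free.
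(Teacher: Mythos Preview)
Your proposal is correct and follows essentially the same route as the paper's proof: bound $\P_\theta(Y_i=1,\ \widetilde Y_i=0)$ by $\P(i\leftrightarrow\Delta I)$, sum over $i\in I$, and dominate by $|\Delta I|\cdot\E_{\mu^*}|C(0)|$ via the exponential decay bound~\eqref{e:samecomp}, then invoke $|\Delta I|/|I|\to0$. The only cosmetic difference is that the paper passes directly from $\sum_{i\in I}\P(i\leftrightarrow\Delta I)=\E\big[\sum_{i\in I}\chi_{\{i\leftrightarrow\Delta I\}}\big]$ to $|\Delta I|\,\E|C(0)|$ by interpreting the sum as the size of the cluster grown inward from $\Delta I$, whereas you make the union bound over $v\in\Delta I$ and the sum interchange explicit; the content is the same.
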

\begin{proof}
As $Y_i\ge \widetilde{Y}_i$ almost surely,
\begin{align*}
\E\left[Y_i- \widetilde{Y}_i\right]&=\P\left(Y_i=1,\ \widetilde{Y}_i=0\right)\\
&\le\P\left(Y_i=1\textrm{ and }\exists j\in L\setminus I:\ X_j=1,\ i\leftrightarrow j\right)\\
&\le\P\left(i\leftrightarrow \Delta I\right),
\end{align*}
which expresses that $Y_i$ and $\widetilde{Y}_i$ can differ only if $i\in I$ is connected to the exterior vertex boundary of~$I$. Further,
\begin{align*}
\sum_{i\in I}\P\left(i\leftrightarrow \Delta I\right)&=\E\left[\sum_{i\in I}\chi_{\{i\leftrightarrow \Delta I\}}\right].
\end{align*}
But this is the expected size of the open component that is grown from all vertices of $\Delta I$ towards the inside of~$I$. It is upper bounded by $|\Delta I|\times\E[|C(0)|]$. On the compact~$\Theta$, the mean size of the open component of any vertex has a universal finite upper bound by~\eqref{e:samecomp}. Hence,
\begin{align*}
\sup_{\theta\in\Theta}\frac{1}{n_I}\left|\sum_{i\in I}\E_\theta \left[\widetilde{Y}_i-Y_i\right]\right|\le\frac{|\Delta I|}{n_I}\,\E[|C(0)|]\to 0
\end{align*}
as $n_I\to \infty$, due to our assumption $|\Delta I|/|I|\to 0$ about the nested sequence of~$I$.
\end{proof}

As in the case of~$Y$, a lower bound for $\big(\widetilde{S}^k_k-\E \widetilde{S}^k_k\big)/k$ does not pose any additional difficulty. In the proof of Lemma~\ref{l:corr}, the covariances cannot increase when we constrain the set of edges to those among the first $n$ vertices. Concretely, $\E[|C(i)|]$ cannot increase. Therefore Proposition~\ref{p:first} stays true for~$\widetilde{Y}$.

\begin{proof}[Lemma~\ref{l:corr2}] The proof follows closely that of Lemma~\ref{l:corr}. For any two pairs $(i_1,i_2),(j_1,j_2)\in I_2$,
\[\E[Y_{i_1}Y_{i_2} Y_{j_1}Y_{j_2}]-\E [Y_{i_1}Y_{i_2}] \,\E [Y_{j_1}Y_{j_2}]\ge 0\]
due to the FKG inequality applied to $\{Y_{i_1}Y_{i_2}=1\}$ and $\{Y_{j_1}Y_{j_2}=1\}$. Therefore, for any $\theta\in\Theta$,
\begin{align*}
\sum_{\substack{(i_1,i_2),(j_1,j_2)\in I_2(n)\\(i_1,i_2)\neq (j_1,j_2)}}\Big(\E[Y_{i_1}Y_{i_2} Y_{j_1}Y_{j_2}]-\E [Y_{i_1}Y_{i_2}] \,\E [Y_{j_1}Y_{j_2}]\Big)&\ge 0.
\end{align*}

The first step towards the upper bound, similarly to~\eqref{e:BK}, uses the BK inequality:
\begin{multline*}
\P(Y_{i_1}Y_{i_2} Y_{j_1}Y_{j_2}=1)-\P(Y_{i_1}Y_{i_2}=1) \,\P(Y_{j_1}Y_{j_2}=1)\\
\le \P\Big(\{Y_{i_1}Y_{i_2} Y_{j_1}Y_{j_2}=1\}\setminus \{Y_{i_1}Y_{i_2}=1\}\circ\{Y_{j_1}Y_{j_2}=1\}\Big).
\end{multline*}

Cooccurrence which is not disjoint is one where at least one of $i_1$ and $i_2$ is connected to at least one of $j_1$ and $j_2$ in the non-extended edge set, or in symbols,
\begin{multline*}
\{Y_{i_1}Y_{i_2} Y_{j_1}Y_{j_2}=1\}\setminus \{Y_{i_1}Y_{i_2}=1\}\circ\{Y_{j_1}Y_{j_2}=1\}
\subseteq \{i_1\leftrightarrow \{j_1, j_2\}\} \cup \{i_2\leftrightarrow \{j_1, j_2\}\},
\end{multline*}
where $\leftrightarrow$ is meant to be reflective so that not disjoint cooccurrence might involve e.g. that $i_1=j_1$. So for every fixed $\theta\in\Theta$,
\begin{multline*}
\sum_{\substack{(i_1,i_2),(j_1,j_2)\in I_2(n)\\(i_1,i_2)\neq(j_1,j_2)}}\Big(\E[Y_{i_1}Y_{i_2} Y_{j_1}Y_{j_2}]-\E [Y_{i_1}Y_{i_2}] \,\E [Y_{j_1}Y_{j_2}]\Big)\\
\le \sum_{\substack{(i_1,i_2),(j_1,j_2)\in I_2(n)\\(i_1,i_2)\neq(j_1,j_2)}} \Big(\P\big(i_1\leftrightarrow \{j_1, j_2\}\big)+\P\big(i_2\leftrightarrow \{j_1, j_2\}\big)\Big).
\end{multline*}
It suffices to treat the two terms individually, and one of them gives
\begin{align*}
\sum_{\substack{(i_1,i_2),(j_1,j_2)\in I_2(n)\\(i_1,i_2)\neq(j_1,j_2)}} \P\big(i_1\leftrightarrow \{j_1, j_2\}\big)&\le \sum_{i_1\in I(n)} \sum_{i_2\sim i_1} \sum_{(j_1,j_2)\in I_2(n)\setminus \{(i_1,i_2)\}}\Big(\P(i_1\leftrightarrow j_1) +\P(i_1\leftrightarrow j_2)\Big)\\
&\le \sum_{i_1\in I(n)} \sum_{i_2\sim i_1}\left( \sum_{\substack{(j_1,j_2)\in I_2(n)\setminus \{(i_1,i_2)\}\\j_1=i_1}} 2 + \sum_{\substack{(j_1,j_2)\in I_2(n)\setminus \{(i_1,i_2)\}\\j_2=i_1}} 2\right.\\
&\quad +\left.\sum_{\substack{(j_1,j_2)\in (I(n)\setminus \{i_1\})_2\\~}}\Big(\P(i_1\leftrightarrow j_1) +\P(i_1\leftrightarrow j_2)\Big) \right)\\
&\le 6 \times 2 \sum_{i_1\in I(n)}\left(5\times 2 + 6 \sum_{j_1\in I(n)\setminus \{i_1\}}\P(i_1\leftrightarrow j_1)\right)\\
&\le 12 n\left(10+6 \frac{1}{\mathrm{e}^{g(\mu^*)}-1}\right)=\mathcal{O}(n),
\end{align*}
where in the second inequality, we separate between cases when $i_1=j_1$ or $i_1=j_2$ and when not, and notice that when they are not equal, then all $(j_1,j_2)$ pairs are disjoint from $i_1$. In the third inequality, we replace the sum for $i_2\sim i_1$ by a factor of~6 (for the triangular lattice), and instead of $(j_1,j_2)$, we sweep for $j_1$, and then for its at most 6 neighbours $j_2$ separately. Thereby we reduced the problem to the previous case and the fourth inequality follows by~\eqref{e:samecomp}. This completes the proofs of Lemma~\ref{l:corr2} and Proposition~\ref{p:second}.
\end{proof}

The proof of Proposition~\ref{p:second} can be adapted to~$\widetilde{Y}$. For example, the following variant of Proposition~\ref{p:means} also holds:
\begin{align}\label{e:convij}
\sup_{\theta\in\Theta}\frac{1}{n_p}\left|\sum_{(i,j)\in I_2}\E_\theta\left[\widetilde{Y}_i\widetilde{Y}_j\right]-\sum_{(i,j)\in I_2}\E_\theta [Y_i Y_j]\right|\ &\conv{n_I\to\infty}\ 0.
\end{align}

\section{Uniform law of large numbers (ULLN) for our process}\label{s:ULLN}

In the interests of conciseness, we continue assuming that there is only one colour: $n_c=1$. This leads to no loss of generality. We prove that the SLLNs, Propositions \ref{p:first} and~\ref{p:second}, hold uniformly over the compact parameter set $\Theta\subset [0,1]\times [0,p_c[$. Similarly to the preceding, we write everything out for~$Y$, but the result is also valid for~$\widetilde{Y}$. 


To prove the uniform version of Proposition~\ref{p:first}, we check that the conditions of the following theorem hold, where we adapted \cite[p.~8, 2~Theorem]{Pollard_1984} or \cite[p.~25, Lemma~3.1]{vandeGeer_2010} to our setting. For the rewriting of the theorem, we exploited that for a sample $\big((U_i)_{i\in L},(V_{ij})_{(i,j)\in L_2}\big)$ of the seeds and edges, any $Y_i$ is nondecreasing in both $\lambda$ and $\mu$. 

\begin{thm}[cf. {\cite[p.~8, 2~Theorem]{Pollard_1984}}, {\cite[p.~25, Lemma~3.1]{vandeGeer_2010}}]\label{th:ULLN}
Suppose that for every $\varepsilon>0$ there exists a finite set of pairs of parameter vectors
\[\mathcal{P}=\left\{\big(\theta^L_r, \theta^U_r\big)\in \big([0,1]\times [0,p_c[\big)^2\ \Big|\ r\in\{1,\dots, N(\varepsilon)\}\right\}\]
such that
\begin{enumerate}
\item for every $r\in\{1,\dots, N(\varepsilon)\}$, the SLLN holds for $\theta^L_r$ and $\theta^U_r$; that is, if $Y$ is generated with parameter value $\theta^L_r$, then
\[\frac{1}{n_I}\left(\sum_{i\in I}Y_i-\sum_{i\in I}\E_{\theta^L_r} Y_i\right)\ \conv{n_I\to\infty}\ 0\]
almost surely, and similarly for~$\theta^U_r$;
\item for every $\theta\in\Theta$, there is an $r\in\{1,\dots, N(\varepsilon)\}$ such that $\theta^L_r\le \theta\le\theta^U_r$ coordinatewise;
\item for every $r\in\{1,\dots, N(\varepsilon)\}$ and $i\in I$, $\E_{\theta^U_r} Y_i - \E_{\theta^L_r} Y_i \le \varepsilon$.
\end{enumerate}
Then the ULLN holds, that is, 
\[\sup_{\theta\in\Theta}\frac{1}{n_I}\left|\sum_{i\in I}Y_i-\sum_{i\in I}\E_\theta Y_i\right|\ \conv{n_I\to\infty}\ 0\]
almost surely, where $Y$ is generated with parameter value~$\theta$.
\end{thm}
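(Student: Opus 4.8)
The proof is a standard bracketing (``chaining by brackets'') argument in the spirit of the cited theorems, and all of it rests on the monotone coupling recalled just before the statement: once the seeds $(U_i)_{i\in L}$ and edge variables $(V_{ij})_{(i,j)\in L_2}$ are fixed, $\theta\mapsto Y_i$ is nondecreasing in each coordinate, and hence so is $\theta\mapsto\E_\theta Y_i$. The plan is to sandwich the empirical deviation at an arbitrary $\theta$ between the deviations at its bracketing pair, combining the \emph{random} average at the upper bracket $\theta^U_r$ with the \emph{mean} at the lower bracket $\theta^L_r$ (and conversely for the reverse inequality); this asymmetric ``cross'' choice is what keeps the estimate tight. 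Write $D_{n_I}(\theta):=\frac{1}{n_I}\big(\sum_{i\in I}Y_i-\sum_{i\in I}\E_\theta Y_i\big)$ for the quantity whose uniform convergence to $0$ is to be shown, and note that $D_{n_I}(\theta^L_r)$ and $D_{n_I}(\theta^U_r)$ are exactly the bracket averages whose almost sure convergence is granted by the first condition.

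First I would fix $\varepsilon>0$ and take the finite family $\mathcal{P}=\big\{(\theta^L_r,\theta^U_r)\big\}_{r=1}^{N(\varepsilon)}$. For an arbitrary $\theta\in\Theta$ pick, by the second condition, an index $r=r(\theta)$ with $\theta^L_r\le\theta\le\theta^U_r$ coordinatewise. Pointwise monotonicity upward for the random part, monotonicity downward for the subtracted mean, and then adding and subtracting $\frac1{n_I}\sum_{i\in I}\E_{\theta^U_r}Y_i$ together with the third condition give
\[D_{n_I}(\theta^L_r)-\varepsilon\le D_{n_I}(\theta)\le D_{n_I}(\theta^U_r)+\varepsilon\qquad\text{for every }\theta\in\Theta,\]
whence $\sup_{\theta\in\Theta}|D_{n_I}(\theta)|\le\max_{1\le r\le N(\varepsilon)}\max\big(|D_{n_I}(\theta^L_r)|,|D_{n_I}(\theta^U_r)|\big)+\varepsilon$. (The supremum on the left is measurable because $\theta\mapsto Y_i$ is monotone in each coordinate and $\Theta$ is separable; alternatively one works with outer probability.)

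Next, since the maximum on the right is over finitely many sequences, each tending to $0$ almost surely by the first condition, it tends to $0$ on an event of probability one, on which therefore $\limsup_{n_I\to\infty}\sup_{\theta\in\Theta}|D_{n_I}(\theta)|\le\varepsilon$. Running this with $\varepsilon=1/m$, $m\in\mathbb{N}\setminus\{0\}$, and intersecting the countably many probability-one events produces a single probability-one event on which the $\limsup$ is bounded by $1/m$ for every $m$, hence equals $0$; this is the asserted ULLN. The argument transfers verbatim to $\widetilde{Y}$, since $\widetilde{Y}_i$ is likewise nondecreasing in $\lambda$ and $\mu$ under the common coupling.

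Within this statement there is no serious obstacle: the only delicate points are getting the asymmetric bracket right in the two-sided estimate and the routine measurability of the supremum over $\theta\in\Theta$. The substantive work is deferred to the construction of the family $\mathcal{P}$ — in particular to verifying the third condition uniformly in $i\in I$, i.e.\ to bounding $\E_{\theta^U_r}Y_i-\E_{\theta^L_r}Y_i$ by $\varepsilon$ whenever $\theta^L_r,\theta^U_r$ are close, for which Russo's formula and pivotality together with the exponential decay of the cluster-size distribution are the natural tools; that is where the main effort will go.
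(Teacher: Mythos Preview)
Your argument is correct and is precisely the standard bracketing proof that underlies the cited results of Pollard and van~de~Geer. The paper itself does not give a proof of this theorem: it states it as an adaptation of those references (exploiting the monotone coupling in the reformulation) and immediately turns to verifying its hypotheses via Lemma~\ref{l:ucont}. Your write-up therefore supplies exactly what the paper leaves to the citations, and your closing remark correctly identifies where the real work lies---in the Lipschitz estimate for $\theta\mapsto\E_\theta Y_i$ that enables Condition~3.
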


We construct $\mathcal{P}$ such that the rectangles $R_r$ spanned by $\theta^L_r=\big(\lambda^L_r,\mu^L_r\big)$ and $\theta^U_r=\big(\lambda^U_r,\mu^U_r\big)$, that is, the closed rectangles $\big\{\big(\lambda^L_r, \mu^L_r\big),\ \big(\lambda^U_r, \mu^L_r\big),\ \big(\lambda^U_r, \mu^U_r\big),\ \big(\lambda^L_r, \mu^U_r\big)\big\}$, cover $\Theta$. By this construction, Condition~2 holds. No matter how we choose finitely many pairs $\big(\theta^L_r, \theta^U_r\big)$, Condition~1 holds for each by Proposition~\ref{p:first}. We achieve Condition~3 by proving Lipschitz continuity of the expectation $\E_\theta Y_i$ in $\theta$.

\begin{lem}\label{l:ucont}
For any $\mu^*\in]0,p_c[$, The expectation $\E_\theta Y_i$ is Lipschitz continuous in $\theta$ over the set $[0,1]\times [0,\mu^*]$ with some Lipschitz constant~$L_0$, which is universal for $i\in L$.
\end{lem}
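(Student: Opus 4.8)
The plan is to prove Lipschitz continuity in the two coordinates separately and then combine, using the coupling $X_i=\chi_{\{U_i<\lambda\}}$, $\xi_{ij}=\chi_{\{V_{ij}<\mu\}}$ introduced in Section~\ref{s:corr}. Since $Y_i=\chi_{\{i\leftrightarrow^*\infty\}}$ is monotone nondecreasing in $\lambda$ and in $\mu$, for $\theta=(\lambda,\mu)\le\theta'=(\lambda',\mu')$ coordinatewise one has $\E_{\theta'}Y_i-\E_\theta Y_i=\P(Y_i^{\theta'}=1,\,Y_i^{\theta}=0)\ge 0$, and it suffices to bound this by $L_0\big((\lambda'-\lambda)+(\mu'-\mu)\big)$. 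I would split the difference through the intermediate point $(\lambda',\mu)$, so that I only ever vary one parameter at a time.

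First I would handle the $\lambda$-direction. Fix $\mu\le\mu^*<p_c$ and condition on the edge configuration $(\xi_{ij})$; this determines the open clusters, in particular the cluster $C(i)$ of $i$ on the non-extended lattice. Given this, $Y_i=1$ iff at least one vertex of $C(i)$ is seeded, and increasing $\lambda$ to $\lambda'$ can flip $Y_i$ from $0$ to $1$ only if some $U_j$ for $j\in C(i)$ lies in $[\lambda,\lambda')$. By a union bound, $\P\big(Y_i^{(\lambda',\mu)}=1,\,Y_i^{(\lambda,\mu)}=0\,\big|\,(\xi_{ij})\big)\le |C(i)|\,(\lambda'-\lambda)$, hence $\E_{(\lambda',\mu)}Y_i-\E_{(\lambda,\mu)}Y_i\le (\lambda'-\lambda)\,\E[|C(i)|]$. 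By the exponential-decay bound~\eqref{e:samecomp}, $\E[|C(i)|]\le 1+1/(\mathrm{e}^{g(\mu^*)}-1)=:L_1$ uniformly in $i\in L$ and in $\mu\le\mu^*$.

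Next, the $\mu$-direction, at fixed $\lambda$. Increasing $\mu$ to $\mu'$ can turn $Y_i$ from $0$ to $1$ only if some edge $e$ with $V_e\in[\mu,\mu')$ becomes open and is pivotal for the event $\{Y_i=1\}$ in the configuration with that edge open; more simply, it can only happen if some such newly-open edge $e=(u,v)$ connects $i$'s (now enlarged) cluster to a seeded vertex, which in particular forces $i\leftrightarrow^* u$ or $i\leftrightarrow^* v$ in the larger graph, equivalently, in the $\mu'$-percolation (still with $\mu'\le\mu^*$) the endpoint of such an edge is at graph-distance bounded by the size of a cluster from $i$. Concretely, I would bound $\P\big(Y_i^{(\lambda,\mu')}=1,\,Y_i^{(\lambda,\mu)}=0\big)$ by $\P\big(\exists\,e=(u,v):\ V_e\in[\mu,\mu'),\ u\leftrightarrow i\text{ in }\mu'\text{-percolation}\big)$, which by a union bound over $e$ is at most $(\mu'-\mu)\,\E\big[\#\{\text{edges incident to }C(i)\}\big]\le (\mu'-\mu)\cdot 6\,\E[|C(i)|]$ on the triangular lattice (at most $6$ incident edges per vertex, clusters taken in the $\mu^*$-percolation). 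This gives a constant $L_2:=6L_1$, uniform in $i$ and in $\mu,\mu'\le\mu^*$. Setting $L_0:=\max\{L_1,L_2\}$ and adding the two one-parameter bounds through the intermediate point yields $\big|\E_{\theta'}Y_i-\E_\theta Y_i\big|\le L_0\big(|\lambda'-\lambda|+|\mu'-\mu|\big)$, i.e. Lipschitz continuity on $[0,1]\times[0,\mu^*]$ with a universal constant.

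The main obstacle is making the $\mu$-direction estimate rigorous: unlike seeding, which acts additively on the vertices of a fixed cluster, changing $\mu$ alters the cluster structure itself, so one must be careful that a single newly-open edge is the \emph{only} thing that can change, i.e. invoke a first-difference (pivotality) argument and bound the probability of a relevant pivotal edge. I expect the cleanest route is to enumerate the edges $e$ with $V_e\in[\mu,\mu')$, note these are the only candidates, and for each such $e$ bound the probability that $e$ is pivotal for $\{Y_i=1\}$ in the $\mu'$-configuration (which entails one endpoint of $e$ lying in $C^{\mu'}(i)$, itself stochastically dominated by $C^{\mu^*}(i)$), then take expectations and sum; the expected number of such candidate edges incident to $C(i)$ is controlled by~\eqref{e:samecomp} and the bounded degree of the lattice. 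Everything else is a routine union bound and the already-established uniform bound on mean cluster size.
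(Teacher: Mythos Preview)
Your approach is correct and reaches the same bound as the paper, but via a different decomposition. The paper does not split into $\lambda$- and $\mu$-directions; instead it orders all edges of the extended graph (source edges carrying $\lambda$, lattice edges carrying $\mu$) as $e_0,e_1,\dots$ and telescopes one edge at a time, obtaining the exact Russo-type identity
\[
\E_{\theta'}Y_i-\E_\theta Y_i=\sum_{k\ge 0}(\vartheta'_k-\vartheta_k)\,\P_{\theta^k}\big(e_k\text{ is pivotal for }\{Y_i=1\}\big),
\]
with intermediate parameter vectors $\theta^k$, and then bounds each pivotality probability by a connection probability $\P(j\leftrightarrow i)$ (or a sum of two such), summed via~\eqref{e:samecomp}. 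Your two-step coordinate-wise split is more hands-on: the $\lambda$-step is actually cleaner than the paper's source-edge sum (you condition on the whole edge configuration once rather than telescoping vertex by vertex), whereas the $\mu$-step ends up needing essentially the same pivotality/first-difference idea as the paper but without the convenience of the exact formula.

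One point to tighten in your $\mu$-direction: you cannot pass directly from $\sum_e \P\big(V_e\in[\mu,\mu'),\ u(e)\leftrightarrow^{\mu'} i\big)$ to $(\mu'-\mu)\,\E\big[\#\{\text{edges incident to }C(i)\}\big]$, because $V_e\in[\mu,\mu')$ forces $e$ open in the $\mu'$-configuration and is therefore not independent of $\{u(e)\leftrightarrow^{\mu'} i\}$. The clean fix is the one you gesture at: on $\{Y_i^{(\lambda,\mu')}=1,\,Y_i^{(\lambda,\mu)}=0\}$, take a $\mu'$-open path from $i$ to a seed and let $e=(u,v)$ be its \emph{first} edge with $V_e\in[\mu,\mu')$, with $u$ on the $i$-side; then $u\leftrightarrow^{\mu} i$ \emph{off} $e$, an event measurable in $(V_f)_{f\ne e}$ and hence genuinely independent of $V_e$. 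Union-bound over pairs $(e,u)$ and use $\P(u\leftrightarrow^{\mu} i\text{ off }e)\le\P(u\leftrightarrow^{\mu^*} i)$ to recover your $(\mu'-\mu)\cdot 6\,\E_{\mu^*}[|C(i)|]$. (Your alternative ``$e$ pivotal in the $\mu'$-configuration'' is not quite enough as stated: with several new edges, none need be individually pivotal at $\mu'$; this is exactly why the paper's edge-by-edge telescoping introduces the intermediate $\theta^k$.) Finally, the paper explicitly reduces a general pair $\theta,\theta'$ to the ordered case via $\theta^L=\theta\wedge\theta'$, $\theta^U=\theta\vee\theta'$ and monotonicity of $\E_\theta Y_i$; you assume $\theta\le\theta'$ without stating this reduction, but it is the same one-line argument.
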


Lemma~\ref{l:ucont} guarantees uniform continuity in~$\theta$. Instead of $\delta>0$ that corresponds to the $\varepsilon$ required by Theorem~\ref{th:ULLN}, we take a $\delta'\in ]0,\delta[$. For instance, $\delta'=\varepsilon/(2L_0)$ is suitable. We cover $\Theta$ with open rectangles $R_u$ of the above form with diameter $|\theta^U_r-\theta^L_r|=\delta'$. We intersect each $R_u$ with $[0,1]\times [0,p_c[$ to avoid overhangs; they remain relatively open. Because of compactness, there is a finite subcover of~$\Theta$ with such potentially trimmed open rectangles. We define $\mathcal{P}$ via the vertices of these finitely many rectangles. $R_r$ are now closed rectangles with diameter~$\delta'$. They are contained in rectangles with diameter~$\delta$, and Condition~3 is satisfied. In conclusion, a proof of Lemma~\ref{l:ucont} proves the SLLN for~$Y$.

\begin{proof}[Lemma~\ref{l:ucont}]
Consider $\theta, \theta'\in [0,1]\times [0,\mu^*]$, $\theta=(\lambda,\mu)$ and $\theta'=(\lambda',\mu')$. We can assume that $\theta\le\theta'$ coordinatewise. If this were not the case, we would prove the inequality for $\theta^L:=(\lambda\wedge\lambda',\mu\wedge\mu')$ and $\theta^U:=(\lambda\vee\lambda',\mu\vee\mu')$. This suffices since $|\theta-\theta'|=|\theta^L-\theta^U|$, and both $\E_\theta Y_i$ and $\E_{\theta'} Y_i$ are contained in $[\E_{\theta^L} Y_i,\ \E_{\theta^U} Y_i]$ due to monotonicity.

We identify the vertices of~$L$ with the source edges, and fix an ordering of all source and contamination edges: $L\cup L_2=\{e_0,e_1,e_2,\dots\}$. Let $\vartheta:\ \mathbb{N}\to [0,1]$ be such that $\vartheta_k=\lambda$ if $e_k$ is a source edge, and $\vartheta_k=\mu$ if $e_k$ is a contamination edge. Define $\vartheta'$ analogously with $\lambda', \mu'$ in place of $\lambda, \mu$, respectively. Finally, let $\theta^k,\theta'^k:\ \mathbb{N}\to [0,1]$ be defined by
\begin{align*}
(\theta^k)_j&:=\begin{cases}\vartheta'_j&\textrm{if }j<k,\\\vartheta_j&\textrm{if }j\ge k,\end{cases}&(\theta'^k)_j&:=\begin{cases}\vartheta'_j&\textrm{if }j\le k,\\\vartheta_j&\textrm{if }j>k,\end{cases}
\end{align*}
for $k,j\in\mathbb{N}$. Let $\omega_\theta$ be the configuration that is specified by $\big((U_i)_{i\in L},(V_{ij})_{(i,j)\in L_2}\big)$ and parameter~$\theta$ via $(X_i)_{i\in L}$ and $(\xi_{ij})_{(i,j)\in L_2}$. Then
\begin{align*}
\E_{\theta'} Y_i - \E_\theta Y_i&=\P_{\theta'}(Y_i=1)-\P_\theta(Y_i=1)\\
&=\sum_{k=0}^\infty \P\big(Y_i(\omega_{\theta'^k})=1\textrm{ and }Y_i(\omega_{\theta^k})=0\big)\\
&=\sum_{k=0}^\infty (\vartheta'_k-\vartheta_k)\P_{\theta^k}\big(e_k\textrm{ is pivotal for }Y_i=1\big),
\end{align*}
where the second equality is just the law of total probability when we know that $\{Y_i=1\}$ is an increasing event, and the third equality is elaborated in \cite[pp.~41--43]{Grimmett_1999} as such a step is used in the proof of Russo's formula. Note that the concerns in that derivation related to an infinite edge set do not apply here because we have always got only one edge $e_k$ whose parameter differs between $\theta^k$ and~$\theta'^k$. (The price we pay is that each pivotality is with a different parameter vector~$\theta^k$.) If $e_k$ is a source edge, then $\vartheta'_k-\vartheta_k=\lambda'-\lambda$, and if $e_k$ is a contamination edge, then $\vartheta'_k-\vartheta_k=\mu'-\mu$. Further,
\begin{align*}
\P_{\theta^k}\big(e_k\textrm{ is pivotal for }Y_i=1\big)&\le\begin{cases}1,&\textrm{if }e_k\textrm{ is the source edge of vertex }i,\\
\P_{\theta^k}(j\leftrightarrow i),&\textrm{if }e_k\textrm{ is the source edge of vertex }j\neq i,\\
1,&\textrm{if }e_k\textrm{ is an edge incident with }i,\\
\P_{\theta^k}(i_1\leftrightarrow i)+\P_{\theta^k}(i_2\leftrightarrow i),&\textrm{if }e_k\textrm{ is the edge }(i_1,i_2), i\neq i_1,i_2.\end{cases}
\end{align*}
Then
\begin{align*}
\sum_{e_k \textrm{ source edge}}&(\vartheta'_k-\vartheta_k)\P_{\theta^k}\big(e_k\textrm{ is pivotal for }Y_i=1\big)\\
&\le (\lambda'-\lambda)\left(1+\sum_{j\in L\setminus\{i\}}\P_{\theta^k}(j\leftrightarrow i)\right)\\
&\le (\lambda'-\lambda)\left(1+\frac{1}{\mathrm{e}^{g(\mu^*)}-1}\right)
\end{align*}
by~\eqref{e:samecomp}. Using tricks from the proof of Lemma~\ref{l:corr2},
\begin{align*}
\sum_{e_k \textrm{ contamination edge}}&(\vartheta'_k-\vartheta_k)\P_{\theta^k}\big(e_k\textrm{ is pivotal for }Y_i=1\big)\\
&\le (\mu'-\mu)\left(6+\sum_{(i_1,i_2)\in (L\setminus\{i\})_2}\big(\P_{\theta^k}(i_1\leftrightarrow i)+\P_{\theta^k}(i_2\leftrightarrow i)\big)\right)\\
&\le (\mu'-\mu)\left(6+2\times 6\sum_{j\in L\setminus\{i\}}\P_{\theta^k}(j\leftrightarrow i)\right)\\
&\le (\mu'-\mu)\left(6+\frac{12}{\mathrm{e}^{g(\mu^*)}-1}\right).
\end{align*}
Consequently,
\begin{align*}
\E_{\theta'} Y_i - \E_\theta Y_i&\le (\lambda'-\lambda)\left(1+\frac{1}{\mathrm{e}^{g(\mu^*)}-1}\right) + (\mu'-\mu)\left(6+\frac{12}{\mathrm{e}^{g(\mu^*)}-1}\right)\\
&\le (\lambda'-\lambda+\mu'-\mu)\left(6+\frac{12}{\mathrm{e}^{g(\mu^*)}-1}\right)\le L_0|\theta'-\theta|
\end{align*}
for some $L_0>0$ because in finite dimensions, all norms are equivalent.
\end{proof}


Lemma~\ref{l:ucont} for $\E_\theta[Y_i Y_j]$ ($i\sim j$) can be shown by a now straightforward adjustment of the original proof. This then implies that the following modification of Theorem~\ref{th:ULLN} holds.
\begin{thm}\label{th:ULLN2}
Suppose that the conditions of Theorem~\ref{th:ULLN} hold with the following updates to points 1 and~3:
\begin{enumerate}
\item[1'] for every $r\in\{1,\dots, N(\varepsilon)\}$, if $Y$ is generated with parameter value $\theta^L_r$, then
\[\frac{1}{n_p}\left(\sum_{(i,j)\in I_2}Y_i Y_j-\sum_{(i,j)\in I_2}\E_{\theta^L_r} [Y_i Y_j]\right)\ \conv{n_I\to\infty}\ 0\]
almost surely, and similarly for~$\theta^U_r$;
\item[3'] for every $r\in\{1,\dots, N(\varepsilon)\}$ and $(i,j)\in I_2$, $\E_{\theta^U_r} [Y_i Y_j] - \E_{\theta^L_r} [Y_i Y_j]\le \varepsilon$.
\end{enumerate}
Then the ULLN holds, that is
\[\sup_{\theta\in\Theta}\frac{1}{n_p}\left|\sum_{(i,j)\in I_2}Y_i Y_j-\sum_{(i,j)\in I_2}\E_\theta [Y_i Y_j]\right|\ \conv{n_I\to\infty}\ 0\]
almost surely, where $Y$ is generated with parameter value~$\theta$.
\end{thm}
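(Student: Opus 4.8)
The plan is to mimic the proof of Theorem~\ref{th:ULLN} verbatim, since Theorem~\ref{th:ULLN2} is nothing but its restatement with the first moments $Y_i$ replaced by the second moments $Y_i Y_j$ for $i\sim j$ and the normalisation $n_I$ replaced by $n_p$. First I would record that the construction of the covering family $\mathcal{P}$ of rectangles $R_r$ carries over unchanged: by the Lipschitz continuity of $\E_\theta[Y_i Y_j]$ in $\theta$ on $[0,1]\times[0,\mu^*]$ (the version of Lemma~\ref{l:ucont} for second moments, with some universal constant $L_1$), for a prescribed $\varepsilon>0$ I choose $\delta'=\varepsilon/(2L_1)$, cover $\Theta$ by finitely many trimmed open rectangles of diameter $\delta'$, and let $\mathcal{P}$ consist of their corner pairs $(\theta^L_r,\theta^U_r)$. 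Then Condition~2 holds by construction, Condition~1' holds for every corner by Proposition~\ref{p:second}, and Condition~3' holds because $\E_{\theta^U_r}[Y_i Y_j]-\E_{\theta^L_r}[Y_i Y_j]\le L_1|\theta^U_r-\theta^L_r|\le L_1\delta'<\varepsilon$, uniformly in $(i,j)\in I_2$.

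The core of the argument is then the sandwiching step. Fix $\varepsilon>0$ and the associated $\mathcal{P}$. Because any $Y_i Y_j$ is nondecreasing in both $\lambda$ and $\mu$ for a fixed sample $\big((U_i)_{i\in L},(V_{ij})_{(i,j)\in L_2}\big)$ (it is a product of two increasing indicators, hence increasing), for each $\theta\in\Theta$ pick $r$ with $\theta^L_r\le\theta\le\theta^U_r$ and estimate
\begin{align*}
\frac{1}{n_p}\sum_{(i,j)\in I_2}Y_i Y_j - \frac{1}{n_p}\sum_{(i,j)\in I_2}\E_\theta[Y_i Y_j]
&\le \frac{1}{n_p}\sum_{(i,j)\in I_2}Y_i(\omega_{\theta^U_r}) Y_j(\omega_{\theta^U_r}) - \frac{1}{n_p}\sum_{(i,j)\in I_2}\E_{\theta^L_r}[Y_i Y_j]\\
&= \left(\frac{1}{n_p}\sum_{(i,j)\in I_2}Y_i Y_j - \frac{1}{n_p}\sum_{(i,j)\in I_2}\E_{\theta^U_r}[Y_i Y_j]\right)_{\theta^U_r}\\
&\quad + \frac{1}{n_p}\sum_{(i,j)\in I_2}\left(\E_{\theta^U_r}[Y_i Y_j] - \E_{\theta^L_r}[Y_i Y_j]\right).
\end{align*}
The first parenthesis tends to $0$ almost surely by Condition~1' applied at $\theta^U_r$, and the second is at most $\varepsilon$ by Condition~3'. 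The matching lower bound is obtained symmetrically by using $\omega_{\theta^L_r}$ and $\E_{\theta^U_r}$. Since $\mathcal{P}$ is finite, outside a single null set all the corner SLLNs in Condition~1' hold simultaneously, so $\limsup_{n_I\to\infty}\sup_{\theta\in\Theta}\big|\tfrac{1}{n_p}\sum_{(i,j)\in I_2}Y_i Y_j - \tfrac{1}{n_p}\sum_{(i,j)\in I_2}\E_\theta[Y_i Y_j]\big|\le\varepsilon$ almost surely. Letting $\varepsilon$ run through a sequence tending to $0$ and taking the countable union of the corresponding null sets yields the claimed uniform almost sure convergence, where the $\widetilde{Y}$-version follows because Proposition~\ref{p:second}, Lemma~\ref{l:corr2} and the second-moment analogue of Proposition~\ref{p:means} (equation~\eqref{e:convij}) all hold for $\widetilde{Y}$ as well.

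I do not expect a genuine obstacle here: the only nontrivial ingredient is the second-moment version of Lemma~\ref{l:ucont}, and the paper already asserts this follows by a straightforward adjustment of the pivotality decomposition — an edge $e_k$ is now pivotal for $\{Y_i Y_j=1\}$ only if it is pivotal for $\{Y_i=1\}$ or for $\{Y_j=1\}$, so the bound on $\sum_k(\vartheta'_k-\vartheta_k)\P_{\theta^k}(e_k\text{ pivotal for }Y_i Y_j=1)$ is at most the sum of the two bounds already obtained in the proof of Lemma~\ref{l:ucont}, giving a universal Lipschitz constant $L_1\le 2L_0$. The one place that warrants a line of care is that $I_2$ depends on $n_I$ through $I=I(n_I)$, so Condition~1' must be read, as in Proposition~\ref{p:second}, along the nested sequence with $n_p=n_p(n_I)\sim 3n_I$; this is exactly the setting of Proposition~\ref{p:second} and introduces nothing new.
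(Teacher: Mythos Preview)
Your proposal is correct and follows exactly the route the paper intends: the paper does not spell out a proof of Theorem~\ref{th:ULLN2} but only remarks that the second-moment analogue of Lemma~\ref{l:ucont} ``can be shown by a now straightforward adjustment'' and that ``this then implies'' the modified ULLN, so you are simply unpacking the sandwiching argument and the pivotality bound that the paper leaves implicit. Your observation that pivotality for $\{Y_iY_j=1\}$ forces pivotality for $\{Y_i=1\}$ or $\{Y_j=1\}$, hence $L_1\le 2L_0$, is the natural way to carry out that adjustment.
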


The derivations and results of this section hold with $\widetilde{Y}$, too. The assumption of identifiability, Proposition~\ref{p:means} and \eqref{e:convij} together guarantee that the second term of~\eqref{e:convalpha} converges to zero almost surely if and only if $\theta=\theta_0$. Additionally, the arguments of the first term of~\eqref{e:convalpha} converge uniformly to those of the second term almost surely, due to the conclusions of Theorems \ref{th:ULLN} and~\ref{th:ULLN2}. This proves our main theorem, Theorem~\ref{th:main}.

We followed the philosophy that the dataset $\mathcal{Y}$ comes from the infinite lattice~$L$ although only a finite subset is observed. This is an idealised view that assumes the existence of a process on the infinite lattice. Otherwise, when the dataset is of type $\widetilde{\mathcal{Y}}$, the derivation is simpler because Proposition~\ref{p:means} and \eqref{e:convij} are not needed.



\section{Computer testing of the proposed method}\label{s:comp}

\subsection{Implementation}\label{s:impl}
We implemented the proposed MSM parameter estimator in the \textsc{Matlab} software (The MathWorks, Inc.), and we report our findings in this section. See also \cite{Beck_2015} for an early version with $n_c=3$ colours. For the the objective function
\begin{equation}
\alpha\left(\begin{array}{c} \left(\bar{\mathcal{Y}}^\ell - \frac{1}{n_s}\sum_{s=1}^{n_s} \bar{Y}^{\ell,s} \right)_{\ell\in\{1,\dots,n_c\}}\\ \left(\bar{\mathcal{Z}}^\ell - \frac{1}{n_s}\sum_{s=1}^{n_s} \bar{Z}^{\ell,s} \right)_{\ell\in\{1,\dots,n_c\}}\end{array} \right),\label{e:alpha}
\end{equation}
we chose the quadratic form $\alpha(\eta)=\eta\T \Omega \eta$ the following way:
\begin{align}
\Omega&=\diag\Big((\bar{\mathcal{Y}}^1)^{-2},\dots,(\bar{\mathcal{Y}}^{n_c})^{-2},(\bar{\mathcal{Z}}^1)^{-2},\dots,(\bar{\mathcal{Z}}^{n_c})^{-2}\Big).\label{e:Omega}
\end{align}
In the unlikely case that a $\bar{\mathcal{Y}}^\ell$ or a $\bar{\mathcal{Z}}^\ell$ is zero, the corresponding diagonal element of $\Omega$ is set to~$1$. Through this normalisation, we expect each coordinate to contribute roughly equally to the sum.

Common random numbers are used during the exploration of the parameter space. This removes an element of fluctuation as different $\theta=(\lambda^1,\dots,\lambda^{n_c},\mu)\in\Theta$ are tested. We propose two alternative methods for sampling synthetic datasets. Method~1 is the canonical approach. We draw and fix independent random variables from the uniform distribution on $[0,1]$: $(U^{\ell,s}_i)$ for $\ell\in\{1,\dots,n_c\}$, $s\in\{1,\dots,n_s\}$, $i\in I$, and $(V^s_{ij})$ for $s\in\{1,\dots,n_s\}$, $(i,j)\in I_2$. Thereafter, for each parameter vector, seeding and the open or closed state of edges are defined by
\begin{align*}
X^{\ell,s}_i&:=\begin{cases}1&\textrm{if }U^{\ell,s}_i<\lambda^\ell,\\0&\textrm{otherwise},\end{cases}&&\textrm{for }\ell\in\{1,\dots,n_c\},\ s\in\{1,\dots,n_s\},\ i\in I;\\
\xi^s_{ij}&:=\begin{cases}1&\textrm{if }V^s_{ij}<\mu,\\0&\textrm{otherwise},\end{cases}&&\textrm{for }s\in\{1,\dots,n_s\},\ (i,j)\in I_2.
\end{align*}
This method gives a binomially distributed number of open edges and, similarly, seeded vertices for each colour~$\ell$.

We anticipate that it is beneficial for the parameter estimation to remove the randomness in the numbers of seeds and open edges, and to make exactly as many edges open as their expected number, $\zeta(\mu n_p)$, where $\zeta$ is the rounding to the nearest integer with some tie-breaking rule. The same is stipulated for seeds: $\zeta(\lambda^\ell n_I)$ random vertices shall be seeded with colour~$\ell$. This is what Method~2 does. We see this as a variance-reduction trick that achieves lower variance by introducing dependencies between random draws: for example, by knowing the state of all edges but one, we can infer the state of the remaining edge.

Let $S_n$ denote the set of permutations of $\{1,\dots,n\}$. In Method~2, one draws permutations $(\sigma^{\ell,s})$ from $S_{n_I}$ independently, uniformly at random for $\ell\in\{1,\dots,n_c\}$, $s\in\{1,\dots,n_s\}$, and independent permutations $(\tau^s)$ from $S_{n_p}$ uniformly at random for $s\in\{1,\dots,n_s\}$. With these permutations fixed, for each $\theta\in\Theta$, one lets
\begin{align*}
X^{\ell,s}_i&:=\begin{cases}1&\textrm{if }\sigma^{\ell,s}(i)\le \zeta(\lambda^\ell n_I),\\0&\textrm{otherwise},\end{cases}&&\textrm{for }\ell\in\{1,\dots,n_c\},\ s\in\{1,\dots,n_s\},\ i\in I;\\
\xi^s_{ij}&:=\begin{cases}1&\textrm{if }\tau^s \big((i,j)\big)\le\zeta(\mu n_p),\\0&\textrm{otherwise},\end{cases}&&\textrm{for }s\in\{1,\dots,n_s\},\ (i,j)\in I_2.
\end{align*}

Minimisation over the parameter space $\Theta$ is conducted with the \textsc{Matlab} routine \texttt{fminsearchbnd} \cite{DErrico_2012} for constrained optimisation. $\lambda^\ell_{\textrm{max}}:=\bar{\mathcal{Y}}^\ell$ is certainly an upper bound on what $\lambda^\ell$ any point estimator might estimate ($\ell\in\{1,\dots,n_c\}$) as this is the moment estimate in case $\mu=0$. The upper bound $\mu_{\textrm{max}}$ on $\mu$ is left to the user's judgement.

The last user input in addition to $n_s$, Method and $\mu_{\textrm{max}}$ is $n_{\textrm{opt}}$ which specifies how many different initial states to try in the optimisation runs. We expect an inverse relationship between seeding rates and the contamination rate, given the data. Thus the initial parameter values for $k\in\{1,\dots,n_{\textrm{opt}}\}$ are chosen as
\begin{align*}
\lambda^\ell_{\textrm{initial}}(k)&=\bigg(1-\frac{k-1}{n_{\textrm{opt}}}\bigg)\lambda^\ell_{\textrm{max}},&&\textrm{for }\ell\in\{1,\dots,n_c\},\\
\mu_{\textrm{initial}}(k)&=\chi_{\{n_{\textrm{opt}}>1\}}\frac{k-1}{n_{\textrm{opt}}-1}\, \mu_{\textrm{max}}.&&
\end{align*}

\subsection{Results}

In order to test the performance of the proposed estimation procedure, we created a number of synthetic datasets with $n_c=3$ colours, different sizes and different, known parameter vectors using Method~1. Tables~\ref{tb:table1}--\ref{tb:table4} report the results of estimating $\theta_0=(\lambda^1,\lambda^2,\lambda^3,\mu)$ using different input settings $(n_s, n_{\textrm{opt}}, \mu_{\textrm{max}})$.

The two estimators, which are based on Methods 1 and~2 of random number generation, are denoted by $\hat{\theta}^{(\textrm{M}1)}_{n_s,n_I}$ and $\hat{\theta}^{(\textrm{M}2)}_{n_s,n_I}$, respectively. We display the relative bias of the estimators in percentage terms:
\[d^{(\textrm{M}1)}=100\left| 1-\hat{\theta}^{(\textrm{M}1)}_{n_s,n_I}\Big/\theta_0\right|\]
(the operations are coordinatewise), and analogously, $d^{(\textrm{M}2)}$ for $\hat{\theta}^{(\textrm{M}2)}_{n_s,n_I}$. Finally, we let $\alpha_{\hat{\theta}^{(\textrm{M}1)}_{n_s,n_I}}$ and $\alpha_{\hat{\theta}^{(\textrm{M}2)}_{n_s,n_I}}$ denote the value of the objective function $\alpha$ in~\eqref{e:alpha} at~$\hat{\theta}^{(\textrm{M}1)}_{n_s,n_I}$ and $\hat{\theta}^{(\textrm{M}2)}_{n_s,n_I}$, respectively.

The computations were conducted on a laptop computer equipped with a $2.8$~GHz Intel Core~{i7-2640M} dual-core processor and $8$~GB RAM. Although it is clear that the $n_{\textrm{opt}}$ parameter searches and for each, the $n_s$ simulations lend themselves to parallelisation, our implementation does not benefit from this insight. The columns of $\alpha_{\hat{\theta}^{(\textrm{M}1)}_{n_s,n_I}}$ and $\alpha_{\hat{\theta}^{(\textrm{M}2)}_{n_s,n_I}}$ display in brackets running times in seconds for completing the parameter estimation procedure. These times are indicative only and their use for comparisons is limited as less demanding other tasks were also running on the computer simultaneously. As far as we can tell, the parameter estimation ran in RAM without resorting to swap memory on disk.

We found no definitive answer as to whether Method 1 or~2 is preferable. Table~\ref{tb:table3} suggests Method~2, but Table~\ref{tb:table4} is as inconclusive as smaller-sized datasets.

Broadly, the relative bias of the estimates becomes smaller as $n_I$ grows. From $n_I=25\times 25=625$ to $n_I=500\times 500=250000$, the relative bias of the $\mu$ estimate improves from about $35$--$40\%$ to below~$5\%$. We have also observed that as $n_I$ grows, there is ever less need to try several initial states because the solutions tend to converge to the same estimator. In our experience, the existence of local optima that necessitate a greater $n_{\textrm{opt}}$ were characteristic of the smaller lattice sizes only.

In the smallest dataset, Table~\ref{tb:table1}, one can observe that $\lambda^1$ and $\lambda^2$ are consistently overestimated, whereas $\lambda^3$ and $\mu$ are underestimated in all six estimations. This turned out to be due to a quirk of the randomly generated dataset. While $(\lambda^1,\lambda^2,\lambda^3)=(0.1,0.05,0.07)$,
in reality, the dataset had
\[\bar{\mathcal{X}}=\frac{1}{n_I}\left(\sum_{i\in I}\mathcal{X}^1_i,\,\sum_{i\in I}\mathcal{X}^2_i,\,\sum_{i\in I}\mathcal{X}^3_i\right)=(0.1072,\,0.0528,\,0.0640).\]
One can notice that in Table~\ref{tb:table1}, $\lambda^1$ and $\lambda^2$ are overestimated to a greater extent than how much $\lambda^3$ is underestimated. Then the observed systemic underestimation of~$\mu$ is consistent with this in light of the expected inverse relationship between seeding and contamination described at the end of Section~\ref{s:impl}.

In Table~\ref{tb:table3}, where the lattice size $n_I=300\times 300=90000$ is most relevant to our practical application in Section~\ref{s:exp}, $(n_s, n_{\textrm{opt}})\in\{(2,8),(4,4),(8,2)\}$ allow a comparison of different input choices with approximately identical computational cost. $(n_s, n_{\textrm{opt}})=(4,4)$ and right behind it $(8,2)$ proved to be the best choices, beating $(n_s, n_{\textrm{opt}})=(2,8)$. Against the expectations, $(n_s, n_{\textrm{opt}})=(5,5)$ happened to not improve the estimate with input $(4,4)$. On this lattice size, $\mu$ is estimated to $10\%$ accuracy with $1$--$2$ hours running time. In Table~\ref{tb:table4}, we get better than $5\%$ accuracy on a larger lattice with $7$--$8$ hours running time.

In Tables \ref{tb:table1}--\ref{tb:table4}, for fixed $n_I$, $\alpha_{\hat{\theta}^{(\textrm{M}1)}_{n_s,n_I}}$ and $\alpha_{\hat{\theta}^{(\textrm{M}2)}_{n_s,n_I}}$ tend to decrease for increasing $n_s$. This is reassuring, although not a necessity because it is possible that the synthetic dataset is atypical and more simulations (higher $n_s$) do not make it easier to approximate it. Instead, overfitting might yield the lowest $\alpha$ values.


\begin{table}
\begin{center}
\begin{tabular}{ >{$}c<{$} >{$}c<{$} >{$}c<{$} >{$}c<{$} | >{$}r<{$} >{$}r<{$} >{$}r<{$} >{$}r<{$} >{$}r<{$} >{$}r<{$}}
n_s & n_{\textrm{opt}} & \mu_{\textrm{max}} & \theta_0 & \hat{\theta}^{(\textrm{M}1)}_{n_s,n_I} & d^{(\textrm{M}1)} & \alpha_{\hat{\theta}^{(\textrm{M}1)}_{n_s,n_I}} & \hat{\theta}^{(\textrm{M}2)}_{n_s,n_I} & d^{(\textrm{M}2)} & \alpha_{\hat{\theta}^{(\textrm{M}2)}_{n_s,n_I}}\\ \hline
10&10&0.1&0.1 & 0.1292 & 29.16\% & 0.0124 & 0.1277 & 27.74\% & 0.0215\\
&&& 0.05 & 0.0595 & 19.00\% & & 0.0637 & 27.39\% & \\
&&& 0.07 & 0.0611 & 12.71\% & & 0.0538 & 23.11\% & \\
&&& 0.06 & 0.0433 & 27.86\% & (107\,\textrm{s})&0.0376 & 37.28\% & (94\,\textrm{s})\\
\hline
50&10&0.1&0.1 & 0.1289 & 28.92\% & 0.0128 & 0.1267 & 26.75\% & 0.00875\\
&&& 0.05 & 0.0641 & 28.26\% & & 0.0611 & 22.18\% & \\
&&& 0.07 & 0.0603 & 13.79\% & & 0.0588 & 15.94\% & \\
&&& 0.06 & 0.0394 & 34.37\% & (523\,\textrm{s})&0.0422 & 29.65\% & (467\,\textrm{s})\\
\hline
100&10&0.1&0.1 & 0.1350 & 35.01\% & 0.0108 & 0.1259 & 25.87\% & 0.0107\\
&&& 0.05 & 0.0631 & 26.18\% & & 0.0623 & 24.53\% & \\
&&& 0.07 & 0.0613 & 12.44\% & & 0.0595 & 15.00\% & \\
&&& 0.06 & 0.0360 & 40.00\% & (1.03\mathrm{e+}03\,\textrm{s})&0.0403 & 32.84\% & (922\,\textrm{s})
\end{tabular}
\caption{Six estimates for a synthetic dataset with $n_I=25\times 25=625$ vertices ($n_p=1776$) and $\theta_0=(0.1, 0.05, 0.07, 0.06)$. In this synthetic dataset, the relative frequency of the incidence of the three colours in the seeding is $\bar{\mathcal{X}}=(0.107, 0.0528, 0.064)$, while in the contamination-impacted observed data, it is $\bar{\mathcal{Y}}=(0.15, 0.0752, 0.08)$. The relative frequency of adjacent vertices having an open edge between them is $0.0574$.}
\label{tb:table1}
\end{center}
\end{table}

\begin{table}
\begin{center}
\begin{tabular}{ >{$}c<{$} >{$}c<{$} >{$}c<{$} >{$}c<{$} | >{$}r<{$} >{$}r<{$} >{$}r<{$} >{$}r<{$} >{$}r<{$} >{$}r<{$}}
n_s & n_{\textrm{opt}} & \mu_{\textrm{max}} & \theta_0 & \hat{\theta}^{(\textrm{M}1)}_{n_s,n_I} & d^{(\textrm{M}1)} & \alpha_{\hat{\theta}^{(\textrm{M}1)}_{n_s,n_I}} & \hat{\theta}^{(\textrm{M}2)}_{n_s,n_I} & d^{(\textrm{M}2)} & \alpha_{\hat{\theta}^{(\textrm{M}2)}_{n_s,n_I}}\\ \hline
20&10&0.05&0.07 & 0.0734 & 4.79\% & 1.55\mathrm{e-}05 & 0.0739 & 5.52\% & 0.00044\\
&&& 0.05 & 0.0508 & 1.52\% & & 0.0492 & 1.66\% & \\
&&& 0.04 & 0.0390 & 2.49\% & & 0.0392 & 2.07\% & \\
&&& 0.03 & 0.0259 & 13.77\% & (3.08\mathrm{e+}03\,\textrm{s})&0.0262 & 12.79\% & (3.01\mathrm{e+}03\,\textrm{s})\\
\hline
40&10&0.05&0.07 & 0.0737 & 5.28\% & 5.96\mathrm{e-}06 & 0.0733 & 4.78\% & 0.000108\\
&&& 0.05 & 0.0505 & 0.93\% & & 0.0500 & 0.06\% & \\
&&& 0.04 & 0.0392 & 2.05\% & & 0.0393 & 1.82\% & \\
&&& 0.03 & 0.0253 & 15.54\% & (5.9\mathrm{e+}03\,\textrm{s})&0.0252 & 16.00\% & (5.63\mathrm{e+}03\,\textrm{s})
\end{tabular}
\caption{Four estimates for a synthetic dataset with $n_I=100\times 100=10000$ vertices ($n_p=29601$) and $\theta_0=(0.07, 0.05, 0.04, 0.03)$. In this synthetic dataset, the relative frequency of the incidence of the three colours in the seeding is $\bar{\mathcal{X}}=(0.0717, 0.0497, 0.0387)$, while in the contamination-impacted observed data, it is $\bar{\mathcal{Y}}=(0.085, 0.0585, 0.0455)$. The relative frequency of adjacent vertices having an open edge between them is $0.0303$.}
\label{tb:table2}
\end{center}
\end{table}

\begin{table}
\begin{center}
\begin{tabular}{ >{$}c<{$} >{$}c<{$} >{$}c<{$} >{$}c<{$} | >{$}r<{$} >{$}r<{$} >{$}r<{$} >{$}r<{$} >{$}r<{$} >{$}r<{$}}
n_s & n_{\textrm{opt}} & \mu_{\textrm{max}} & \theta_0 & \hat{\theta}^{(\textrm{M}1)}_{n_s,n_I} & d^{(\textrm{M}1)} & \alpha_{\hat{\theta}^{(\textrm{M}1)}_{n_s,n_I}} & \hat{\theta}^{(\textrm{M}2)}_{n_s,n_I} & d^{(\textrm{M}2)} & \alpha_{\hat{\theta}^{(\textrm{M}2)}_{n_s,n_I}}\\ \hline
2&8&0.05&0.05 & 0.0472 & 5.57\% & 0.00241 & 0.0488 & 2.43\% & 0.000149\\
&&& 0.06 & 0.0572 & 4.71\% & & 0.0593 & 1.18\% & \\
&&& 0.03 & 0.0308 & 2.71\% & & 0.0298 & 0.51\% & \\
&&& 0.02 & 0.0225 & 12.59\% & (4\mathrm{e+}03\,\textrm{s})&0.0220 & 9.86\% & (3.95\mathrm{e+}03\,\textrm{s})\\
\hline
4&4&0.05&0.05 & 0.0480 & 4.03\% & 0.000968 & 0.0489 & 2.12\% & 6.11\mathrm{e-}05\\
&&& 0.06 & 0.0578 & 3.59\% & & 0.0589 & 1.90\% & \\
&&& 0.03 & 0.0305 & 1.65\% & & 0.0298 & 0.83\% & \\
&&& 0.02 & 0.0223 & 11.64\% & (3.8\mathrm{e+}03\,\textrm{s})&0.0213 & 6.45\% & (3.19\mathrm{e+}03\,\textrm{s})\\
\hline
8&2&0.05&0.05 & 0.0483 & 3.32\% & 0.000904 & 0.0481 & 3.75\% & 0.000612\\
&&& 0.06 & 0.0586 & 2.35\% & & 0.0586 & 2.26\% & \\
&&& 0.03 & 0.0299 & 0.40\% & & 0.0301 & 0.45\% & \\
&&& 0.02 & 0.0223 & 11.60\% & (3.53\mathrm{e+}03\,\textrm{s})&0.0220 & 10.11\% & (3.83\mathrm{e+}03\,\textrm{s})\\
\hline
5&5&0.05&0.05 & 0.0481 & 3.81\% & 0.000964 & 0.0485 & 2.97\% & 0.000495\\
&&& 0.06 & 0.0580 & 3.29\% & & 0.0588 & 1.99\% & \\
&&& 0.03 & 0.0302 & 0.79\% & & 0.0303 & 1.04\% & \\
&&& 0.02 & 0.0221 & 10.69\% & (6.08\mathrm{e+}03\,\textrm{s})&0.0219 & 9.72\% & (6.12\mathrm{e+}03\,\textrm{s})
\end{tabular}
\caption{Eight estimates for a synthetic dataset with $n_I=300\times 300=90000$ vertices ($n_p=268801$) and $\theta_0=(0.05, 0.06, 0.03, 0.02)$. In this synthetic dataset, the relative frequency of the incidence of the three colours in the seeding is $\bar{\mathcal{X}}=(0.0498, 0.0595, 0.0299)$, while in the contamination-impacted observed data, it is $\bar{\mathcal{Y}}=(0.0558, 0.0667, 0.034)$. The relative frequency of adjacent vertices having an open edge between them is $0.0198$.}
\label{tb:table3}
\end{center}
\end{table}

\begin{table}
\begin{center}
\begin{tabular}{ >{$}c<{$} >{$}c<{$} >{$}c<{$} >{$}c<{$} | >{$}r<{$} >{$}r<{$} >{$}r<{$} >{$}r<{$} >{$}r<{$} >{$}r<{$}}
n_s & n_{\textrm{opt}} & \mu_{\textrm{max}} & \theta_0 & \hat{\theta}^{(\textrm{M}1)}_{n_s,n_I} & d^{(\textrm{M}1)} & \alpha_{\hat{\theta}^{(\textrm{M}1)}_{n_s,n_I}} & \hat{\theta}^{(\textrm{M}2)}_{n_s,n_I} & d^{(\textrm{M}2)} & \alpha_{\hat{\theta}^{(\textrm{M}2)}_{n_s,n_I}}\\ \hline
1&1&0.04&0.03 & 0.0336 & 11.86\% & 0.0243 & 0.0331 & 10.47\% & 0.025\\
&&& 0.04 & 0.0444 & 10.96\% & & 0.0450 & 12.59\% & \\
&&& 0.05 & 0.0553 & 10.70\% & & 0.0555 & 10.94\% & \\
&&& 0.02 & 0.0141 & 29.27\% & (355\,\textrm{s})&0.0136 & 31.91\% & (340\,\textrm{s})\\
\hline
5&5&0.04&0.03 & 0.0297 & 1.08\% & 0.000894 & 0.0297 & 1.04\% & 0.00106\\
&&& 0.04 & 0.0394 & 1.43\% & & 0.0395 & 1.13\% & \\
&&& 0.05 & 0.0518 & 3.57\% & & 0.0519 & 3.86\% & \\
&&& 0.02 & 0.0196 & 1.94\% & (2.8\mathrm{e+}04\,\textrm{s})&0.0194 & 3.01\% & (2.48\mathrm{e+}04\,\textrm{s})
\end{tabular}
\caption{Four estimates for a synthetic dataset with $n_I=500\times 500=250000$ vertices ($n_p=748001$) and $\theta_0=(0.03, 0.04, 0.05, 0.02)$. In this synthetic dataset, the relative frequency of the incidence of the three colours in the seeding is $\bar{\mathcal{X}}=(0.0299, 0.0402, 0.0503)$, while in the contamination-impacted observed data, it is $\bar{\mathcal{Y}}=(0.0336, 0.0451, 0.057)$. The relative frequency of adjacent vertices having an open edge between them is $0.01996$.}
\label{tb:table4}
\end{center}
\end{table}

For further analysis, we introduce two more symbols. One might consider a trivial estimator which assumes no contamination occurring: $\hat{\mu}=0$, $\hat{\lambda}^\ell=\bar{\mathcal{Y}}^\ell$. The corresponding $\alpha_{\textrm{triv}}$ denotes a realisation of~$\alpha$ with parameters from this trivial estimator, computed from $n_s$ simulations with Method 1 or~2. $\alpha_{\theta_0}$ denotes a realisation of~$\alpha$ with the true parameter~$\theta_0$ and $n_s$ simulations.


Table~\ref{tb:table5} compares $\alpha_{\hat{\theta}^{(\textrm{M}1)}_{n_s,n_I}}$ and $\alpha_{\hat{\theta}^{(\textrm{M}2)}_{n_s,n_I}}$, $\alpha_{\textrm{triv}}$ and $\alpha_{\theta_0}$ for the four computer-generated datasets of Tables~\ref{tb:table1}--\ref{tb:table4}. Except for the smallest case, $n_I=625$, $\alpha_{\theta_0}$ is always smaller than $\alpha_{\textrm{triv}}$, as expected. Whereas $\alpha_{\theta_0}$ decreases with increasing~$n_I$, $\alpha_{\textrm{triv}}$ stays roughly constant. $\alpha_{\hat{\theta}^{(\textrm{M}1)}_{n_s,n_I}}$ and $\alpha_{\hat{\theta}^{(\textrm{M}2)}_{n_s,n_I}}$ decrease only initially as $n_I$ increases. One would expect them to be between $\alpha_{\theta_0}$ and $\alpha_{\textrm{triv}}$, which tends to hold for larger lattice sizes. In reality, their value is much lower than $\alpha_{\theta_0}$, but the ratio becomes ever less extreme as $n_I$ grows. This is indicative of initially very strong, but later ever less pronounced overfitting.

\begin{table}
\begin{center}
\begin{tabular}{>{$}c<{$} >{$}c<{$} >{$}c<{$} >{$}c<{$} | >{$}c<{$}   >{$}c<{$}  >{$}c<{$}  >{$}c<{$}  >{$}c<{$}  >{$}c<{$}} 
 n_I & n_p & n_s & n_{\textrm{opt}} & \alpha_{\theta_0}^{(\textrm{M}1)} & \alpha_{\theta_0}^{(\textrm{M}2)} & \alpha_{\textrm{triv}}^{(\textrm{M}1)} & \alpha_{\textrm{triv}}^{(\textrm{M}2)} & \alpha_{\hat{\theta}^{(\textrm{M}1)}_{n_s,n_I}} & \alpha_{\hat{\theta}^{(\textrm{M}2)}_{n_s,n_I}} \\
\hline
25\times25 & 1776 & 10 & 10 & 1.06 & 1.77 & 0.6 & 0.555 & 0.0124 & 0.0215\\
25\times25 & 1776 & 50 & 10 & 0.82 & 0.919 & 0.61 & 0.618 & 0.0128 & 0.00875\\
25\times25 & 1776 & 100 & 10 & 1.19 & 1.09 & 0.597 & 0.589 & 0.0108 & 0.0107\\
\hline
100\times100 & 29601 & 20 & 10 & 0.042 & 0.0454 & 0.59 & 0.61 & 1.55\mathrm{e-}05 & 0.00044\\
100\times100 & 29601 & 40 & 10 & 0.0496 & 0.0576 & 0.598 & 0.601 & 5.96\mathrm{e-}06 & 0.000108\\
\hline
300\times300 & 268801 & 2 & 8 & 0.0179 & 0.0048 & 0.646 & 0.604 & 0.00241 & 0.000149\\
300\times300 & 268801 & 4 & 4 & 0.00948 & 0.0014 & 0.653 & 0.624 & 0.000968 & 6.11\mathrm{e-}05\\
300\times300 & 268801 & 8 & 2 & 0.00792 & 0.00659 & 0.654 & 0.614 & 0.000904 & 0.000612\\
300\times300 & 268801 & 5 & 5 & 0.0104 & 0.00786 & 0.651 & 0.622 & 0.000964 & 0.000495\\
\hline
500\times500 & 748001 & 1 & 1 & 0.0223 & 0.0119 & 0.628 & 0.621 & 0.0243 & 0.025\\
500\times500 & 748001 & 5 & 5 & 0.00626 & 0.00791 & 0.633 & 0.624 & 0.000894 & 0.00106
\end{tabular}
\caption{A comparison of the values of the objective functions for the true value~$\theta_0$, for the trivial estimator and for the MSM estimator. The four synthetic datasets used are the same as in Tables~\ref{tb:table1}--\ref{tb:table4}.}\label{tb:table5}
\end{center}
\end{table}

To test the behaviour of the objective function $\alpha_{\theta_0}$ as $n_I\to\infty$, we generated fresh synthetic datasets of different sizes with a common $\theta_0=(0.03,0.04,0.05,0.02)$. Just generating the single dataset of size $1000\times1000$ took $42$ seconds. For this exercise, the single datasets were compared to simulations with common simulation count $n_s=10$. Table~\ref{tb:table6} shows that both $\alpha_{\theta_0}$ and $\widetilde{\alpha}(\eta)=\eta\T \eta$ converge to zero, although $\alpha_{\theta_0}$ has larger values because of the normalisation by $\Omega$ in~\eqref{e:Omega}. This is numerical evidence in support of Propositions \ref{p:first} and~\ref{p:second}, even with fixed~$n_s$.


\begin{table}
\begin{center}
\begin{tabular}{>{$}c<{$} >{$}c<{$} >{$}c<{$} >{$}c<{$} | >{$}c<{$}  >{$}c<{$}  >{$}c<{$}  >{$}c<{$}}
\textrm{Size} & n_I & n_p & n_s & \alpha_{\theta_0}^{(\textrm{M}1)} & \alpha_{\theta_0}^{(\textrm{M}2)} & \widetilde{\alpha}_{\theta_0}^{(\textrm{M}1)} & \widetilde{\alpha}_{\theta_0}^{(\textrm{M}2)} \\
\hline
25\times25 & 625 & 1776 & 10 & 0.202 & 0.16 & 6.65\mathrm{e-}05 & 4.21\mathrm{e-}05\\
100\times100 & 10000 & 29601 & 10 & 0.0427 & 0.0277 & 7.29\mathrm{e-}06 & 2.57\mathrm{e-}06\\
300\times300 & 90000 & 268801 & 10 & 0.00624 & 0.00807 & 1.68\mathrm{e-}06 & 1.57\mathrm{e-}06\\
500\times500 & 250000 & 748001 & 10 & 0.000799 & 0.0015 & 3.87\mathrm{e-}07 & 3.97\mathrm{e-}07\\
707\times707 & 499849 & 1496720 & 10 & 0.000521 & 0.000365 & 5.08\mathrm{e-}07 & 3.99\mathrm{e-}07\\
1000\times1000 & 1000000 & 2996001 & 10 & 0.00127 & 0.00117 & 9.78\mathrm{e-}08 & 9.04\mathrm{e-}08\\
\end{tabular}
\caption{Realisations of the objective function~$\alpha$ for the true parameter value~$\theta_0$ for different synthetic dataset sizes and of the not normalised variant of the objective function $\widetilde{\alpha}(\eta)=\eta\T \eta$. Here $\theta_0=(0.03,0.04,0.05,0.02)$ across fresh synthetic datasets.}\label{tb:table6}
\end{center}
\end{table}


\section{Cross-contamination rate estimation for digital PCR in lab-on-a-chip microfluidic devices}\label{s:exp}

Our motivation for investigating this problem is the need for quality control in parallelised biochemical experiments run in novel, lab-on-a-chip microfluidic devices for applications in basic research, biotechnology, medical diagnostics and rapid vaccine development. Our collaborators Dr~G\"unter Roth and his group (Centre for Biological Systems Analysis [ZBSA], University of Freiburg) develop such microfluidic devices. The central element of their system is a rectangular well plate with 15~mm edge lengths, with more than 100,000 wells of 19~p$\ell$ volume each. The wells on this chip are arranged in a hexagonal tiling pattern (honeycomb lattice).

Whereas the rival microfluidic technology uses an emulsion of water droplets flowing in an oil medium, this array-based setup fixes a spatial structure, allowing the otherwise neglected analysis of cross-contamination between reaction volumes. Our focus is on evaluating an experiment particularly well suited for this purpose, whose results generalise to other experiments conducted in this lab-on-a-chip device.

In the \emph{digital PCR} experiment, a solution of DNA samples is injected onto the well plate, at such a low concentration that most wells receive 0 or 1 DNA molecule (hence the name \emph{digital}). In the particular case, the solution is a mixture of three different DNA species. We call these template molecules \emph{seeds}. The well plate is covered with a lid (a microscope slide) that is pre-coated with covalently bound DNA primers~\cite{Hoffmann_etal_2012a}. The well plate together with the lid serve to insulate the reaction volumes from each other. The DNA templates are amplified in each of the wells independently with a polymerase chain reaction (PCR). In more detail, the template molecules hybridise to the surface-bound primers and the PCR elongates these primers to form the comp\-le\-men\-ta\-ry strand of the template. In the next heating step, the templates become resolved, whereas the generated comp\-le\-men\-ta\-ry DNA strands stay covalently bound to the surface. The single-strand templates will bind to other surface-bound primers and turn them too into comp\-le\-men\-ta\-ry strands via polymerisation. The result of the PCR cycles is that the whole glass surface above the well gets covered with immobilised comp\-le\-men\-ta\-ry DNA strands. They mirror the spatial arrangement of the initial seed pattern of the wells.

After the PCR, the three comp\-le\-men\-ta\-ry DNA species on the slide are identified via three specifically binding fluorescent hybridisation probes (fluorophores) and their presence or absence can be determined by imaging~\cite{Hoffmann_etal_2012}. In the fluorescent image of the slide (Figure~\ref{f:chips}), we see either black background (where there was no seed), spots in one of the three primary colours indicating a single seed, and sometimes a mixture of two or three primary colours indicating heterogeneous seeding by multiple seeds. Sometimes we also see clusters of one colour, or an unusually high number of mixed colours, indicating cross-contamination between adjacent wells. This happens when the lid is not fitted tightly and during thermal cycling, liquid exchange occurs between reaction volumes around trapped air bubbles and dust particles. In the readout it remains unclear if two neighbours with the same colour (or a single well with a mix of two colours, which has coloured neighbours) were initiated by two seeds or one contaminated the other (Fig.~\ref{f:chips}, bottom panel).

\begin{figure}[ht]
\centering
\includegraphics[width=8cm]{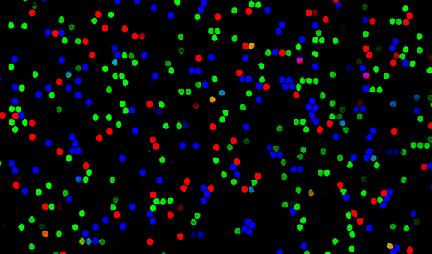}\\
\includegraphics[width=8cm]{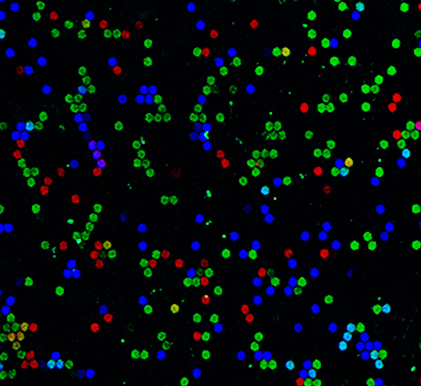}
\caption{(\emph{top})~Image of a glass slide from a digital PCR experiment with little sign of cross-contamination. \cite{Rath_2014} (\emph{bottom})~Image of a slide with clustering fluorescent signals and a higher prevalence of cyan and yellow colours, suggesting higher cross-contamination rate.}\label{f:chips}
\end{figure}

For cross-contamination rate estimation for this experimental setup it is necessary to define a mathematical model of the physical process. It has to involve the triangular lattice, which is the dual of the hexagonal tiling, and colouring of its vertices. The total numbers of DNA templates of each type $\ell\in\{1,\dots,n_c\}$ present in the chip are likely well approximated by $n_c$ discretised normal random variables. We can safely assume that each well receives a Poisson distributed random number of DNA templates of type~$\ell$ because then due to the superposition property, the total number of type $\ell$ templates in the chip is also Poisson distributed, which is close to a normal distribution. The Bernoulli distributed $(X^\ell_i)$ used in our model for seeding are really just a proxy to the either zero or positive value of the corresponding Poisson distribution. From a value $\lambda^\ell$ of the Bernoulli parameter, we can infer the parameter $\widetilde{\lambda}^\ell$ of the respective Poisson distribution through the identity $\lambda^\ell=1-\mathrm{e}^{-\widetilde{\lambda}^\ell}$.

It is also natural to model the possibility of contamination by open edges. It is a useful shortcut to draw the state of the edges independently of the seeding so that an open edge means only the possibility of propagation, which is contingent on the presence of seeds.
There are modelling choices to be made. Contamination might be
\begin{description}
\item[(i)] unidirectional (there is the possibility of a pair of independent, oppositely oriented directed edges $\xi_{i\to j}$ and $\xi_{j\to i}$ between any two adjacent vertices $i\sim j$), or
\item[(ii)] symmetric (undirected edges $\xi_{ij}$).
\end{description}
Open edges might be best represented by
\begin{description}
\item[(1)] independent Bernoulli variables, or by
\item[(2)] locally correlated $0$--$1$ random variables.
\end{description}
Contamination might be
\begin{description}
\item[(A)] confined to neighbours, or
\item[(B)] it might propagate via a series of open edges.
\end{description}

The choice of (ii,1,B) yields the model put forward in Section~\ref{s:intro} (Figure~\ref{f:chips_simul}). Its strength is that it can use standard percolation theory. Our MSM estimator was developed for this model.

For the quality certification of this lab-on-a-chip device, it is useful to estimate in addition to~$\mu$, the total number of vertices which belong to a non-trivial component of the percolation graph. These vertices are the wells which were not insulated from their neighbours. Beyond the digital PCR paradigm, in experimental setups where most wells are expected to give some signal, vertices that are connected to any other are likely to give false signals.

An easy upper bound results from noticing that each edge turns at most two additional vertices connected. For small values of~$\mu$, edges are actually unlikely to share endpoints. The number of edges is distributed according to a binomial distribution with parameters $n_p$ and~$\mu$. Therefore the mean number of potentially contaminated vertices can be estimated as
\[\E\left[\sum_{|C|\ge 2}|C|\right]\le 2\mu n_p \sim 6\mu n_I\]
where the asymptotic equality holds under the assumption that the boundary of~$I$ is `small'. For concrete examples, the conversion from~$n_p$ to~$n_I$ can be accurately determined.

Another approach results by noticing
\begin{align*}
\E\left[\sum_{|C|\ge 2}|C|\right]&=\E\left[n_I-\sum_{i\in I}\chi_{\{|C(i)|=1\}}\right]\\
&=n_I-n_I (1-\mu)^6 +e\\
&=\left(6\mu - 15\mu^2 +\sum_{k=3}^6 \binom{6}{k} (-1)^{k+1} \mu^k \right)n_I +e,
\end{align*}
where $e$ is the correction for boundary vertices.


\begin{figure}[h]
\centering
\includegraphics[width=\textwidth]{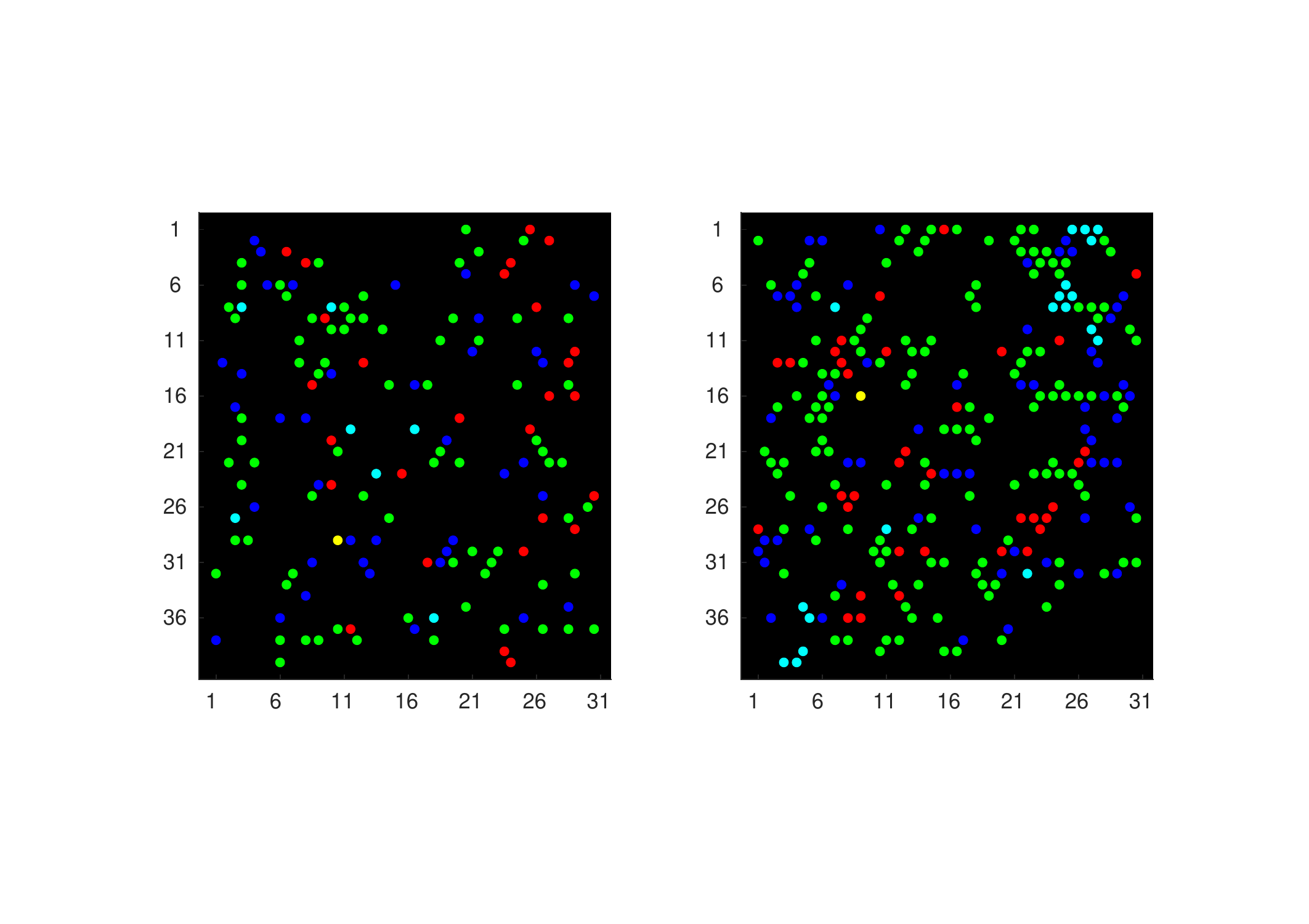}
\caption{(\emph{left})~Computer simulation of a glass slide from a digital PCR experiment under model~(ii,1,B) with $\theta_0=(\lambda^{\textrm{red}},\lambda^{\textrm{green}},\lambda^{\textrm{blue}},\mu)=(0.02,0.07,0.05,0.01)$ and relatively little sign of cross-contamination. (\emph{right})~Computer-simulated slide with clustering fluorescent signals and a higher prevalence of cyan colour, suggesting higher cross-contamination rate. Here $\theta_0=(0.02,0.07,0.05,0.06)$.}\label{f:chips_simul}
\end{figure}

Simpler cases are given by (i,1,A) and (ii,1,A) where the moments $\E[Y_i^\ell]$, $\E[Y_i^\ell Y_i^m]$ and $\E[Y_i^\ell Y_j^\ell]$ ($\ell\neq m$, $i\sim j$) can be computed explicitly. We used \textsc{Mathematica} (Wolfram Research, Inc.) to deal with the many terms, and we report truncations of the complete result for space considerations in the case (i,1,A). It is anticipated in the practical application that $\mu<\lambda^\ell$ for every $\ell$. For non-boundary vertices, under this assumption on the anticipated magnitudes, the dominant terms of the moments of interest in decreasing order are given as
\begin{align*}
\E[Y_i^\ell]&=\P(X_i^\ell=1)+\P(X_i^\ell=0)\sum_{k=1}^6\binom{6}{k}\mu^k(1-\mu)^{6-k}\left(1-(1-\lambda^\ell)^k\right)\\
&=\lambda^\ell+6\lambda^\ell \mu-6(\lambda^\ell)^2\mu-15(\lambda^\ell)^2\mu^2+\mathcal{O}\big((\lambda^\ell)^5\big),\\
\E[Y_i^\ell Y_i^m]&=\P(X_i^\ell X_i^m=1)+\P(X_i^\ell=1,X_i^m=0)\sum_{k=1}^6\binom{6}{k}\mu^k(1-\mu)^{6-k}\left(1-(1-\lambda^m)^k\right)\\
&\quad +\P(X_i^\ell=0,X_i^m=1)\sum_{k=1}^6\binom{6}{k}\mu^k(1-\mu)^{6-k}\left(1-(1-\lambda^\ell)^k\right)\\
&\quad +\P(X_i^\ell=X_i^m=0)\sum_{k=1}^6\binom{6}{k}\mu^k(1-\mu)^{6-k}\left(1-(1-\lambda^\ell)^k\right)\left(1-(1-\lambda^m)^k\right)\\
&=\lambda^\ell \lambda^m +18 \lambda^\ell \lambda^m\mu - 12\Big((\lambda^\ell)^2\lambda^m +\lambda^\ell (\lambda^m)^2\Big)\mu +30\lambda^\ell \lambda^m \mu^2+\mathcal{O}\big(\max\{\lambda^\ell,\lambda^m\}^5\big).
\end{align*}
For $\E[Y_i^\ell Y_j^\ell]$ ($i\sim j$), in the case $X_i^\ell + X_j^\ell=1$, the empty vertex might have been contaminated by the seeded vertex, or it might have been contaminated from its five remaining neighbours. If $X_i^\ell =X_j^\ell=0$, then one can separate cases according to the seeding status of the two shared neighbours of $i$ and~$j$. These considerations give
\begin{align*}
\E[Y_i^\ell Y_j^\ell]&=(\lambda^\ell)^2 + 2\lambda^\ell \mu+8(\lambda^\ell)^2 \mu + 2\lambda^\ell \mu^2 -10(\lambda^\ell)^3 \mu + 9(\lambda^\ell)^2 \mu^2 +\mathcal{O}\big((\lambda^\ell)^5\big).
\end{align*}

These $n_c^2/2 + 3 n_c/2$ moment equations provide the opportunity to estimate the $n_c+1$ parameters via the method of moments. Of these, it is $\E[Y_i^\ell Y_j^\ell]$ where the first term with~$\mu$ is highest up in the magnitude ranking, underpinning the physical intuition that the cooccurrence of a colour in two adjacent vertices is the most informative moment about the contamination rate~$\mu$.

Notably, the model (ii,1,A) gives exactly the above moment equations if for any $(i,j)\in I_2$,
\begin{align*}
&\P(\xi_{i\to j}=1)=\P(\xi_{j\to i}=1)=\mu&&\textrm{in model (i,1,A), and}\\
&\P(\xi_{ij}=1)=\mu&&\textrm{in model (ii,1,A).}
\end{align*}
The reason is that the propagation of colours is limited to neighbours, so already second neighbours are ruled out. An edge between $i$ and~$j$ makes a difference in any of the above three moments if and only if $X_i^\ell + X_j^\ell=1$. Say, $X_j^\ell=1=1-X_i^\ell$. Then $\xi_{j\to i}$ has the same effect on these moments as $\xi_{ij}$, and also the same probability because one can marginalise over the state of~$\xi_{i\to j}$. However, $\E[Y_i^\ell Y_j^\ell Y_i^m Y_j^m]$ would differ between the models (i,1,A) and~(ii,1,A). See also the Appendix of~\cite{Frisch_Hammersley_1963}.


\section{Discussion and open problems}

This paper describes the solution of a statistical problem motivated by a concrete practical need. The mathematical modelling part is solved in one of multiple possible ways, and the choice of (ii,1,B) brings in bond percolation into the statistical model. The percolation is subcritical. The parameter estimation method we propose is the MSM, which gives a point estimate. We prove that it is strongly consistent in the limit as the sample size $n_I$ tends to infinity. It is an important point that the number of simulations per proposed parameter vector, $n_s$, can remain bounded to achieve this result.

What is unusual in our setting is that although the sample size is large, it is not independent (nor identically distributed). Introductory percolation theory is used to upper bound long-range dependencies between the $n_I$ samples.

We have implemented the method and its accuracy is tested on synthetic datasets in practically relevant parameter ranges. Estimates for wetlab data are to be published by our collaborators G\"unter Roth and his co-workers in the microfluidics literature.

Parameter estimation in connection with a (static) percolation model is not common in the literature, apart from the quest for the critical value. Dynamic percolation models and dynamic random graphs on a fixed vertex set provide a framework for the contact network in modelling the spread of epidemics. Gilligan and Gibson have been particularly active in studying statistical problems for spatiotemporal models of plant epidemic spread \cite{Gibson_etal_2006, Ludlam_etal_2011_RSIF}. Gilligan and co-workers also conducted experiments with the fungal pathogen \emph{Rhizoctonia solani} grown in a Petri dish to test how infection probability between a pair of lattice points (that is, the parameter $\mu$ of percolation in the directed case~(i)) depends on their distance and how invasive spread (percolation) probability depends on nutrient availability in lattice points and on the distance between lattice points \cite{Bailey_Otten_Gilligan_2000}. They also demonstrated that the random removal (blocking) of sites can hinder and even stop disease spread by driving it subcritical \cite{Otten_Bailey_Gilligan_2004}.

Beyond the almost sure convergence and the numerical studies with synthetic data, we cannot predict the accuracy of our estimator for instance in terms of confidence intervals. It is known that under regularity conditions, especially that the estimator is continuously differentiable with respect to the parameter~$\theta$, $\sqrt{n_I}(\hat{\theta}_{n_s,n_I}-\theta_0)$ is asymptotically normal with known limiting variance \cite[Section~2.3.1]{Gourieroux_Monfort_1996}. It is also possible to choose $\Omega$ optimally, that is, to minimise this asymptotic variance \cite[Section~2.3.4]{Gourieroux_Monfort_1996}. However, our estimator is not even continuous in~$\theta$ because we use what is called a frequency simulator. It is unknown to us whether it is possible to replace the frequency simulator with some importance sampling to achieve asymptotic normality.


Maximum likelihood estimation (MLE) would have the advantage over MSM that its output is reproducible. Its computational cost might also be lower. Consider the following. We know that black areas have no seeds but we have no information about contamination (edges) in them. We also know that at boundaries between different colours, there is no open edge. Therefore, for a MLE, one needs to establish the probabilities of patches with a fixed colour without knowing which vertices were seeded and which got contaminated only.

We wonder if it is possible by using a generating function that encodes the probabilities of seeding and open edges to compute the total probability that the particular patch was created: each vertex in a patch has been seeded or contaminated from a seed somewhere within the patch. We were only able to derive this generating function for patches that are a linear chain of vertices.

General finite, connected patch shapes (subgraphs) are called \emph{(lattice)} \emph{animals}. Bousquet-M\'elou did much work on characterising them via generating functions \cite{Bousquet-Melou_1998, Bousquet-Melou_Rechnitzer_2002}. Our patches can arise as a disjoint union of adjacent connected components (animals). For our application, it would suffice to develop a recursion which allows one to compute generating functions of small patches (large patches are rare) with a computer algebra system. The difficulty is that the problem is two dimensional, and a patch must be split in all possible ways into two disjoint parts in the recursion. Any newly added vertex might have been seeded, or contaminated from the rest of the patch, but it might have itself contaminated other empty vertices of the patch.

Notably, the MSM estimator can be turned into an Approximate Bayesian computation (ABC) method very easily. One needs to fix a prior distribution on $\Theta$ and a small $\varepsilon >0$. The ABC rejection algorithm draws finitely many independent $\theta\in\Theta$ parameter values from the prior distribution. The objective function~\eqref{e:alpha} is evaluated for each proposed $\theta$. The simulations used for the evaluation should no longer use common random numbers but independent ones, and $n_s$ can be set to one. If the value of the objective function is less than $\varepsilon$, then the proposed $\theta$ is accepted, otherwise it is rejected. This way the set of accepted $\theta$ is a good approximation of the posterior distribution.

We have not yet tested model fit due to the lack of experimental data. As contamination is caused by the imperfect fit of the glass lid and trapped bubbles and dust, we anticipate that locally positively correlated open edges might be needed in the model. That is, case (ii,2,B) deserves close attention. One way of modelling positive correlations is to apply the Ising model to the edges. Let $\widetilde{\xi}_{ij}=2\xi_{ij}-1\in\{-1,+1\}$. Then the energy or the Hamiltonian function of a configuration $\xi$ of open edges is
\[H(\xi)=-J\sum_{i<j<k}( \widetilde{\xi}_{ij}\widetilde{\xi}_{ik} + \widetilde{\xi}_{ij}\widetilde{\xi}_{jk} + \widetilde{\xi}_{ik}\widetilde{\xi}_{jk} ) - \widetilde{\mu}\sum_{(i,j)\in I_2}\widetilde{\xi}_{ij}\]
for some $J>0$ and $\widetilde{\mu}<0$, and in the first sum, out of the three terms those are missing where an adjacency condition is not met: $\widetilde{\xi}_{ij}=0$ if $i\nsim j$, so that every pair of incident edges appears once. The probability of the system being in state $\xi$ is proportional to $\mathrm{e}^{-\beta H(\xi)}$ for some~$\beta>0$. Although we have two new parameters $J$ and the inverse temperature $\beta$ in addition to $\widetilde{\mu}$, the increase in degrees of freedom is really just one, $\beta J$ and $\beta\widetilde{\mu}$ relative to~$\mu$.






\section{Acknowledgements}

The authors are grateful to G\"unter Roth and Christin Rath (ZBSA, University of Freiburg) for proposing the problem, for their relentless help in clarifying details of the experimental protocol and for providing sample images. The authors also thank Robin Ryder (Paris Dauphine University) for suggesting the method of simulated moments and Ed Crane (University of Bristol) and Peter Pfaffelhuber (University of Freiburg) for insights. 
B.~M. thanks the AXA Research Fund for their financial support in the form of a postdoctoral fellowship, and the Isaac Newton Institute for Mathematical Sciences (Cambridge, UK) for support and hospitality during the programme \emph{Stochastic dynamical systems in biology: numerical methods and applications} when work on this paper was undertaken. This work was thereby supported by EPSRC Grant Number EP/K032208/1.

\appendix

\renewcommand\thefigure{\thesection.\arabic{figure}}
\setcounter{figure}{0}

\section{Identifiability and numerical estimates of the selected moments}\label{s:ident}

We outline why we conjecture that the parameter $\theta=(\lambda^1,\dots,\lambda^{n_c},\mu)$ is identifiable from the moments $\left((\E Y^\ell_i)_{\ell\in\{1,2,\dots,n_c\}}, (\E [Y^\ell_i Y^\ell_j])_{\ell\in\{1,2,\dots,n_c\}}\right)$ ($i\sim j$). If we focus on just one colour~$\ell$, then the graph of the function $(\lambda^\ell,\mu)\mapsto \E Y^\ell_i$ on the domain $[0,1]\times [0,p_c]$ has level curves which go from high~$\lambda^\ell$ and low~$\mu$ to low~$\lambda^\ell$ and high~$\mu$. In words, the density $\E Y^\ell_i$ of colour $\ell$ is constant if we compensate for a decreasing seeding rate $\lambda^\ell$ by an appropriately increasing contamination rate~$\mu$. The function $(\lambda^\ell,\mu)\mapsto \E [Y^\ell_i Y^\ell_j]$ ($i\sim j$) has level curves with the same property.

However, we conjecture that the level curves of $\E Y^\ell_i$ and $\E [Y^\ell_i Y^\ell_j]$ do not coincide, instead they intersect. While either one of the two moments narrows down the possible value of the parameter vector to one of its level curves, the two moments jointly specify the intersection point of two level curves, which uniquely identifies the parameter value~$(\lambda^\ell,\mu)$.

We provide numerical evidence to back up this claim. For $n_c=1$, we sampled $\E Y_i$ and $\E [Y_i Y_j]$ in $142$ logarithmically spaced parameter vectors. We made an exception to the logarithmic rule to additionally sample along the line of critical~$\mu$ (Figure~\ref{f:cover}). Dataset~A contains a broader coverage of $100$ parameter vectors. For each of these, we generated independently $n_s=5$ realisations of the process on a lattice~$I'$ of size $300\times 300$, and took its central $100\times 100$ sublattice $I\subset I'$ as our data. $\E Y_i$ and $\E [Y_i Y_j]$ are estimated as averages over the central sublattice over $n_s=5$ realisations.

In Dataset~B, $56$ parameter vectors are considered which have lower $\lambda$ values in comparison with Dataset~A, save for an overlap of $14$ parameter vectors. For each vector, we generated independently $n_s=5$ realisations of the process on a lattice~$I'$ of size $1500\times 1500$, and its central $1000\times 1000$ sublattice $I\subset I'$ serves as our data.

The sublattice sizes were selected such that in both datasets, the mean number of seeds is at least~$5$ in the central sublattice used for sampling, even for their respective lowest $\lambda$ values ($\lambda=5\times 10^{-4}$ in Dataset~A, and approximately $5.23\times 10^{-6}$ in Dataset~B). At the larger lattice size used for Dataset~B, for $\mu$ values larger than what we tested, the step of finding the connected open components to generate the data became prohibitively time consuming.

\begin{figure}[ht]
\centering
\includegraphics[width=10cm]{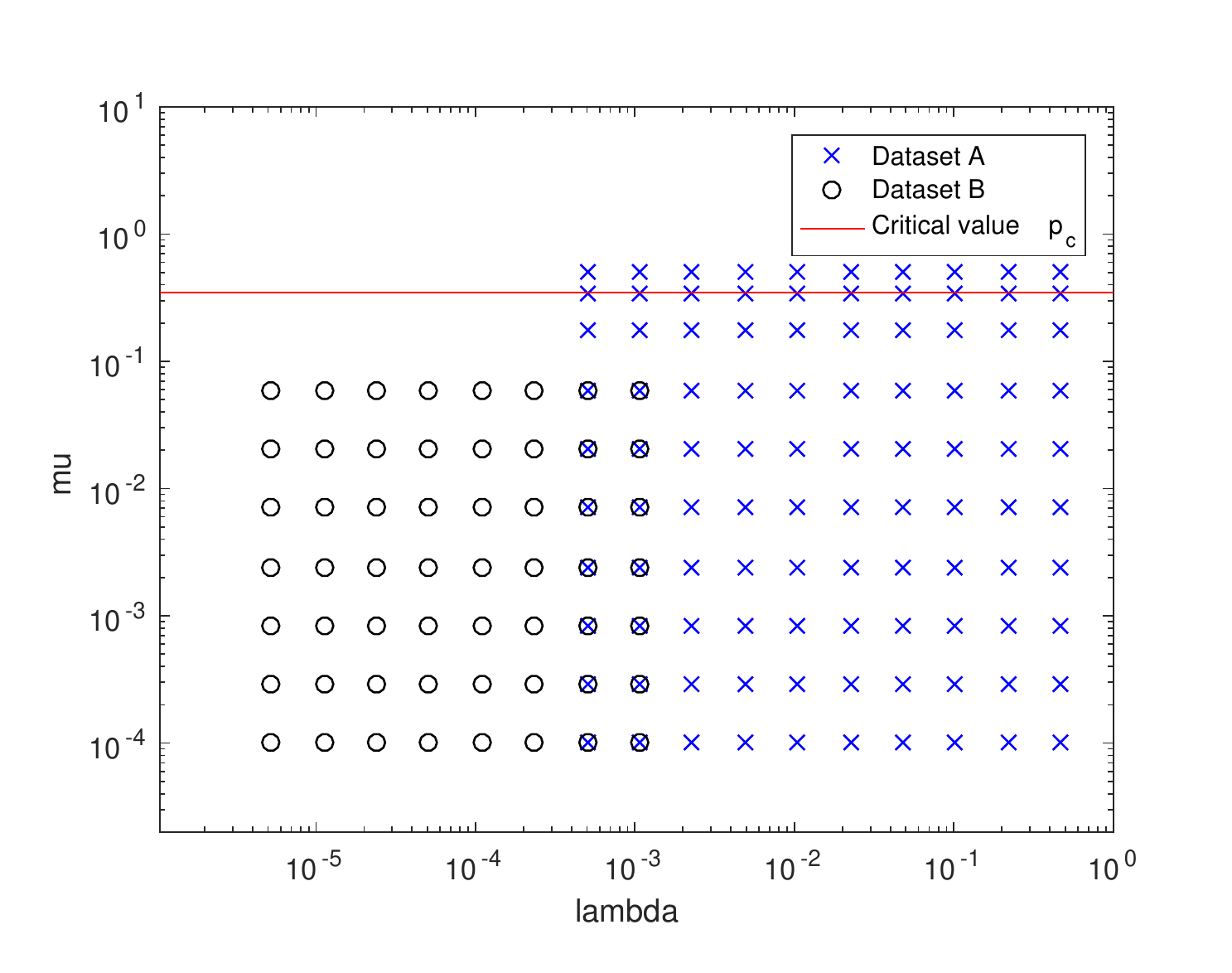}
\caption{Sampled parameter values $\theta=(\lambda,\mu)$. Dataset~A spans $[5\times 10^{-4},0.4676]\times [10^{-4},0.5]$ and Dataset~B spans $[5.23\times 10^{-6}, 0.00107]\times [10^{-4}, 0.0595]$.}\label{f:cover}
\end{figure}

Figures~\ref{f:AYi}--\ref{f:BY} display graphs and level curves of the two coordinates of
\[(\lambda,\mu)\mapsto \left(\frac{1}{n_s n_I}\sum_{s=1}^{n_s} \sum_{i\in I}Y^s_i,\ \frac{1}{n_s n_p}\sum_{s=1}^{n_s} \sum_{(i,j)\in I_2}Y^s_i Y^s_j\right).\]
Close observation of the level curves seems to show that those in Figure~\ref{f:AYi} fan out with different slopes from a smaller region, while those in Figure~\ref{f:AYiYj} are closer to parallel. This supports our conjecture that level curves of one type intersect level curves of the other type in exactly one point, giving identifiability, except perhaps for a null set or otherwise small subset of~$\Theta$ where the two types of level curves coincide.


\begin{figure}[ht]
\centering
\includegraphics[height=12cm]{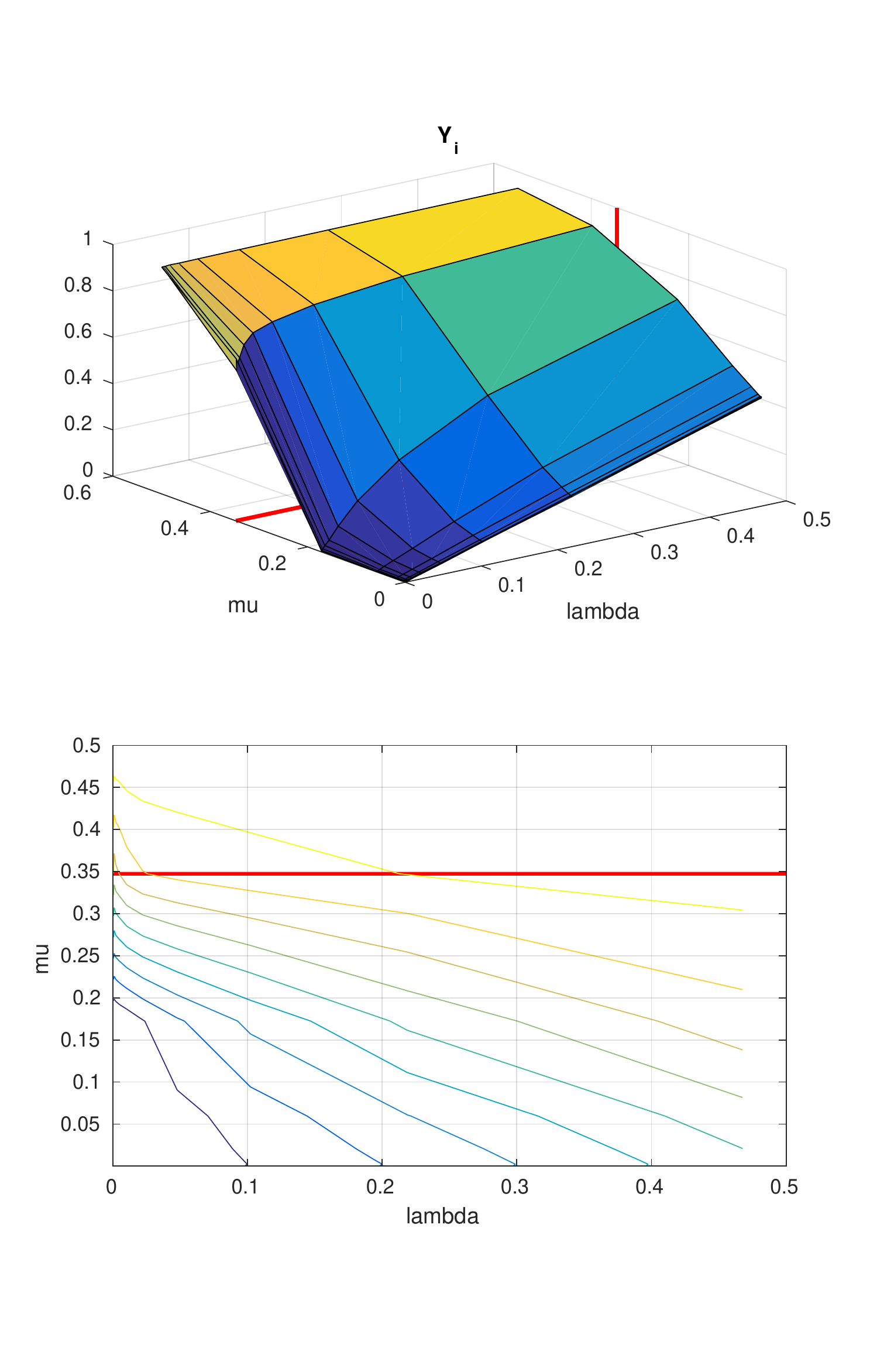}
\caption{(\emph{top})~Empirical means of $Y_i$ for the various parameter vectors of Dataset~A. (\emph{bottom})~Level curves of this function. The red lines mark the critical value~$p_c$.}\label{f:AYi}
\end{figure}

\begin{figure}[ht]
\centering
\includegraphics[height=12cm]{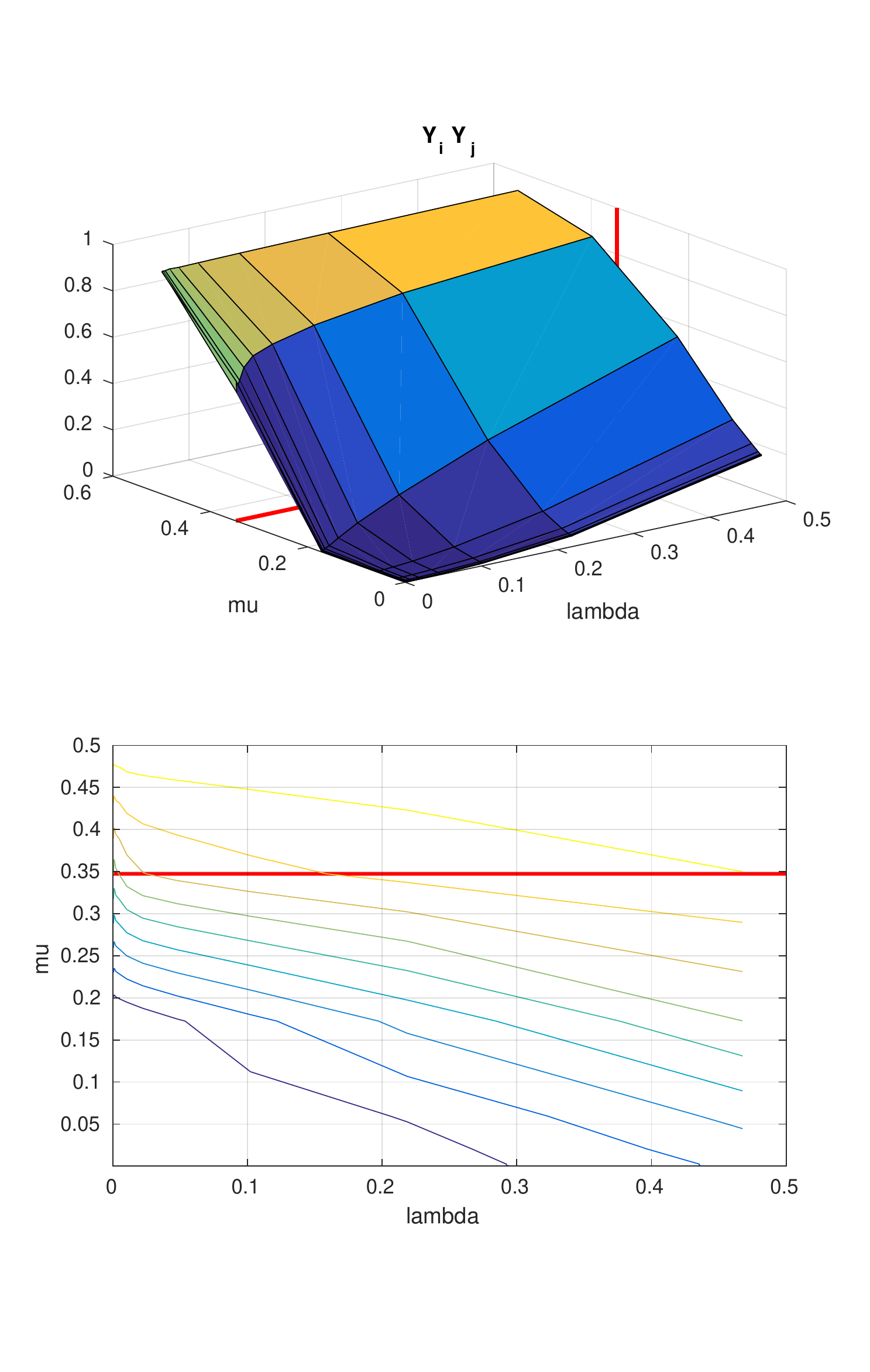}
\caption{(\emph{top})~Empirical means of $Y_i Y_j$ ($i\sim j$) for the various parameter vectors of Dataset~A. (\emph{bottom})~Level curves of this function. The red lines mark the critical value~$p_c$.}\label{f:AYiYj}
\end{figure}

\begin{figure}[hb]
\centering
\includegraphics[height=12cm]{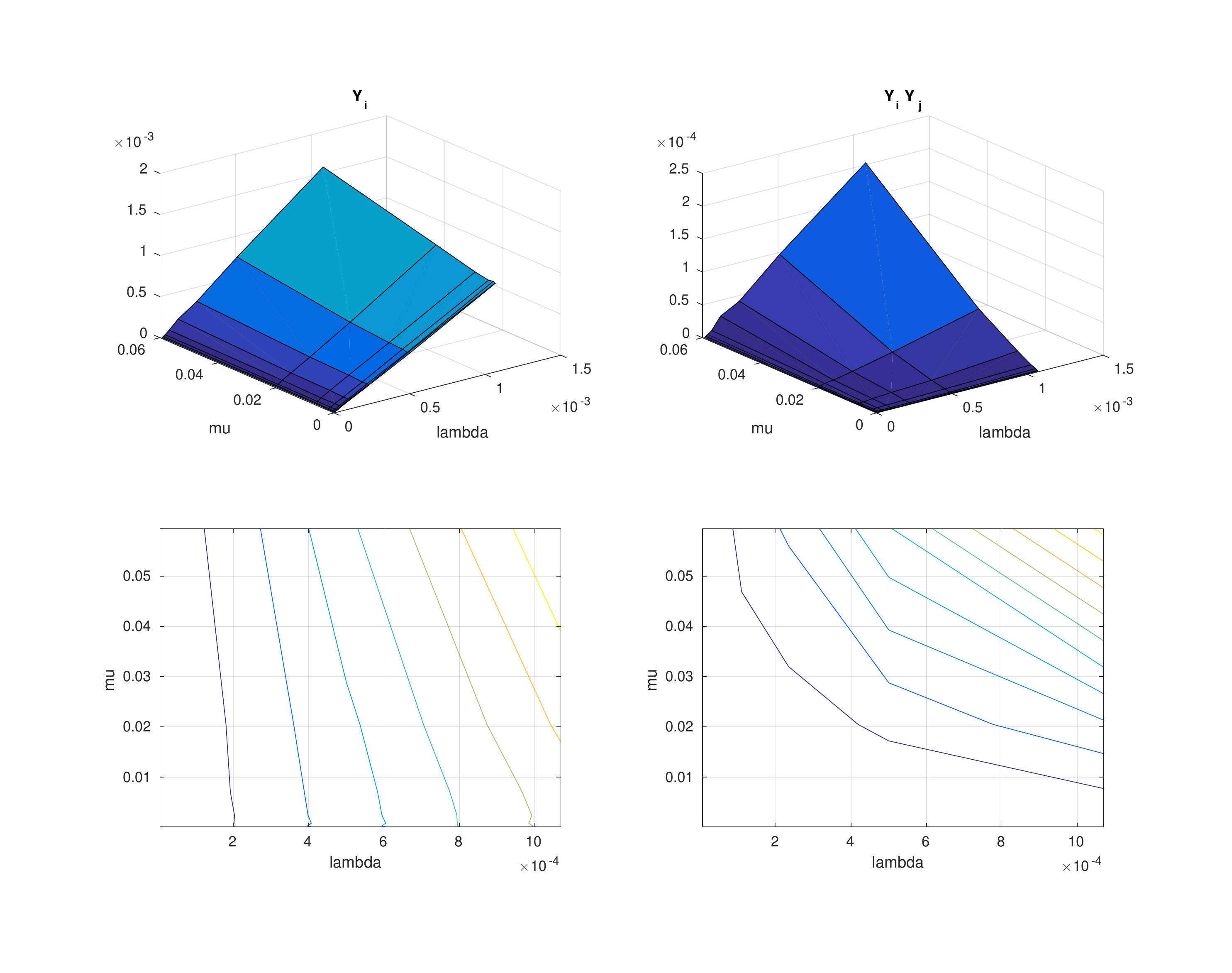}
\caption{(\emph{top left})~Empirical means of $Y_i$ for the various parameter vectors of Dataset~B. (\emph{bottom left})~Level curves of this function. (\emph{top right})~Empirical means of $Y_i Y_j$ ($i\sim j$) for the various parameter vectors of Dataset~B. (\emph{bottom right})~Level curves of this function.}\label{f:BY}
\end{figure}

\bibliographystyle{plainnat}


\end{document}